\pgfplotsset{compat=1.18}
\newcommand{\atcp}[1]{\tcp*[f]{#1}} % align comments
\title{Fare Structure Design in Public Transport\footnote{This work was partially supported by the European Union's Horizon 2020 research and innovation programme [Grant 875022] and the Federal Ministry of Education and Research [Project 01UV2152B] under the project sEAmless SustaInable EveRyday urban mobility (EASIER).}
}
\author[1,2]{Anita Schöbel}
\author[1]{Reena Urban}
\date{}
\affil[1]{University of Kaiserslautern-Landau (RPTU), Kaiserslautern, Germany}
\affil[2]{Fraunhofer-Institute for Industrial Mathematics ITWM, Kaiserslautern, Germany}
\affil[ ]{\{anita.schoebel, reena.urban\}@math.rptu.de}
\newcommand{\PreserveBackslash}[1]{\let\temp=\\#1\let\\=\temp}
\newcolumntype{C}[1]{>{\PreserveBackslash\centering}p{#1}}
\newcolumntype{R}[1]{>{\PreserveBackslash\raggedleft}p{#1}}
\newcolumntype{L}[1]{>{\PreserveBackslash\raggedright}p{#1}}
\newcommand{\NN}{\mathbb{N}} % natural numbers
\newcommand{\RR}{\mathbb{R}} % real numbers
\newcommand{\defeq}{\mathrel{\vcentcolon=}} % :=
\newcommand{\eqdef}{\mathrel{=\vcentcolon}} % =:
\newcommand{\cW}{\mathcal{W}} % caligraphic W for set of paths
\newcommand{\cZ}{\mathcal{Z}} % caligraphic Z for zone partition
\newcommand{\cC}{\mathcal{C}} % caligraphic C for source-terminal pairs
\newcommand{\cO}{\mathcal{O}} % caligraphic O for the complexity
\newcommand{\dist}{\textsf{\textup{dist}}} %
\newcommand{\weightedMedian}{\textsf{\textup{w-median}}} % weighted median
\DeclareMathOperator{\interior}{int} % interior edge set within A and within B
\DeclareMathOperator{\cut}{cut} % edge set between A and B
\newcommand{\BipartiteSubgraph}{\textup{\textsc{Bipartite Subgraph}}} % model name
\newcommand{\Multicut}{\textup{\textsc{Multicut}}} % model name
\newcommand{\MultipleChoiceLinearProgrammingProblem}{\textsc{Multiple Choice Linear Programming Problem}}
\newcommand{\ZDMA}{\textup{\textsc{ZD-MA}}} % model name
\newcommand{\ZDMC}{\textup{\textsc{ZD-MC}}} % model name
\newcommand{\ZDSA}{\textup{\textsc{ZD-SA}}} % model name
\newcommand{\ZDSC}{\textup{\textsc{ZD-SC}}} % model name
\newcommand{\ZDXY}{\textup{\textsc{ZD-XY}}} % model name
\theoremstyle{plain}
\newtheorem{theorem}{Theorem}
\newtheorem{lemma}[theorem]{Lemma}
\newtheorem{corollary}[theorem]{Corollary}
\theoremstyle{definition}
\newtheorem{definition}[theorem]{Definition}
\newtheorem{example}[theorem]{Example}
\newtheorem{remark}[theorem]{Remark}
\begin{document}
	
\maketitle

\begin{abstract}
	Fare planning is one among several steps in public transport planning.
	Fares are relevant for the covering of costs of the public transport operator, but also affect the ridership and the passenger satisfaction.
	A fare structure is the assignment of prices to all paths in a network.
	In practice, often a given fare structure shall be changed to fulfill new requirements, meaning that a new fare strategy is desired. This motivates the usage of prices of the former fare structure or other desirable prices as reference prices.
	In this paper, we investigate the fare structure design problem that aims to determine fares such that the sum of absolute deviations between the new fares and the reference prices is minimized. Fare strategies that are considered here are flat tariffs, affine distance tariffs and zone tariffs.
	Additionally, we regard constraints that ensure that it is not beneficial to buy a ticket for a longer journey than actually traveled (no-elongation property) or to split a ticket into several sub-tickets to cover a journey (no-stopover property).
	Our literature review provides an overview of the research on fare planning.
	We analyze the fare structure design problem for flat, distance and zone tariffs, pointing out connections to median problems.
	Further, we study its complexity which ranges from linear-time solvability to NP-complete cases.
\end{abstract}

%%%
% Keywords
%%%
\textbf{Keywords:} public transport, fare planning, fare structure design, modeling, complexity, linear programming

\newpage

\section{Introduction}

\subsection{Fare Structures}
Fares for public transport usage affect the number of people traveling by public transport as well as the passenger satisfaction, and they are important for covering the costs of the public transport operator.
A variety of fare structures (which assign prices to paths) are implemented worldwide, each with a different focus and purpose.
These can be flat or differential fares, where the latter are further distinguished according to whether they depend on a distance, duration, time or quality component of the journey \cite{Fleishman1996,Schmoecker2016,Czerlinski2021}.
In this paper, we focus on flat, distance and zone tariffs as they are very popular in many countries.

In a \emph{flat tariff}, every ticket has the same price.
This has the advantage that it is very easy to understand, but on the other hand it is often perceived as unfair because passengers with a short journey pay the same price as passengers with a long journey.

At the other end of the spectrum, there are \emph{distance tariffs}, which can depend on the actual distance traveled in the network (network distance tariff) or on the Euclidean distance between the start and the end station of the journey (beeline distance tariff).
In this paper, we consider \emph{affine} distance tariffs, which are composed of a price per kilometer and an additional base amount for the network distance or Euclidean distance, respectively.
Hence, the fare is directly linked to the length of a journey.
The rising popularity of mobile tickets and smart cards has led to an increased interest in distance tariffs because check-in/check-out systems can be used to determine the distance.

\emph{Zone tariffs} group stations to zones and set fares based on the traversed zones.
While for affine distance tariffs all station pairs have an individual price, there are fewer different price levels in zone tariffs.
This paper concerns a \emph{counting-zones pricing}, which means that the area of the public transport operator is divided into zones and the price of a journey is determined according to the number of zones that are traversed.
Hence, the number of zones can be seen as an approximate distance measure.
We distinguish between the option that a zone is counted each time that it is entered (multiple counting) and that the total number of different zones is relevant (single counting).
Furthermore, we consider zone tariffs with and without requiring connected zones.
In practice, zones are not always required to be connected which leads to separate parts of the same zone that can only be reached from each other by traveling through other zones.

As two important criteria in practice for a fare structure to be fair and consistent, we regard the no-elongation property and the no-stopover property introduced in \cite{Schoebel2020, Schoebel2022}.
While the no-elongation property ensures that it is not beneficial to buy a ticket for a longer journey that contains the actual path, the no-stopover property makes sure that it is not beneficial to split a ticket into several tickets for subpaths.
When designing fare structures, we deem these properties important because, on the one hand, they prevent the undercutting of tariffs and, on the other hand, they make it easier to buy tickets because passengers do not need to check whether other tickets are cheaper than that for their actual journey.
Note that in practice, these properties are not always fulfilled.
For insights into horizontal and vertical equity in fare planning, we refer to \cite{Rubensson2020} which examines the geographical and distributional fairness of flat, distance and zone tariffs.

The question that we address in this paper is how to design fare structures.
Our objective is to design a fare structure with fares that are close to given reference prices for the origin-destination pairs under the assumption of a fixed demand.
The notion of reference prices has been introduced in \cite{Hamacher1995} and has further been used in \cite{Babel2003, Hamacher2004,Paluch2013, Paluch2017,Galligari2017}.
The reference prices can, for example, be chosen as prices that are considered fair, or in order to increase the acceptance of the fares and attract more passengers, or as prices of a former fare structure to maintain the prices passengers are used to.
In this paper, an objective function is implemented that minimizes the weighted sum of absolute deviations from reference prices.

\subsection{Literature Review}
In the literature, various optimization models are presented with the aim to determine a fare structure.
The maximization of revenue, demand, fairness or social welfare can be objectives pursued in fare planning.
Thereby, passenger choices are considered to varying degrees: from fixed paths, maybe with a reference price, a willingness to pay or  elasticities to route choice subproblems and equilibrium constraints.
We first review publications that perform simulation studies and regression analysis to examine \emph{given} fare structures as well as publications on cheapest paths.
Our main focus then is on the fare \emph{planning} literature that applies optimization models and techniques to determine optimal fares with respect to the above mentioned objectives.
We separately give an overview of literature on distance- and zone-based fare structures.

\paragraph*{Examination of Given Fare Structures}
A simulation study to evaluate the impact of implementing a distance or a zone tariff on ridership and revenue is conducted in \cite{Gattuso2007}.
Similarly, \cite{Chin2016} performs computational experiments changing from a flat to a distance tariff in a congested network.
Besides the impact on ridership and profit, also implications for different segments of the population are investigated.
In \cite{Maadi2020}, the effects of changes from a zone to a distance tariff on route choice, total travel time, total fares paid and the amount of walking are illustrated in a simulation study.
In \cite{Czerlinski2021}, the authors group fare structures of eleven cities in Poland into flat, distance-, quality-, time- and zone-based fare structures and apply regression analysis to evaluate by which function type (linear, power, polynomial, logarithmic or exponential in the respective unit) they are described best.

The computation of cheapest paths is considered for distance tariffs in a railway context in \cite{MuellerHannemann2006} and for zone tariffs in \cite{Delling2015,Delling2019}.
In \cite{Euler2019,Euler2024}, the so-called ticket graph is presented which models transitions between tickets via transition functions over partially ordered monoids and allows the design of an algorithm for finding cheapest paths in fare structures that do not have the subpath-optimality property. 
The cheapest ticket problem as well as the no-elongation property and the no-stopover property are studied in \cite{Schoebel2020,Schoebel2022} for distance- and zone-based fare structures.

\paragraph*{Fare Planning for Distance-based Fare Structures}
For distance-based tariffs, \cite{Daskin1988} presents a quadratic model for determining a base amount, a price per kilometer and a price per transfer maximizing the revenue. 
As an increasing fare reduces the demand, additional constraints for lower bounds on the demand as well as lower and upper bounds on the fares are applied.
In \cite{Paluch2013, Paluch2017, Hoshino2018}, a distance-based fare structure with two fare levels or an arbitrary but fixed number of fare levels, respectively, based on the number of traversed edges/stations is determined for a line.
While \cite{Paluch2013, Paluch2017} minimize the weighted sum of squared deviations from reference prices, which are given by a granular fare structure with a fare level for each number of traversed edges, \cite{Hoshino2018} maximizes the revenue or the demand while in each case the other one is kept fixed.
In \cite{Yook2014}, a bi-level approach for maximizing the demand that is solved by a genetic algorithm is presented. 
The upper level problem determines distance fares composed of a base amount, a mileage price and surcharges for premier services, while passengers choose routes that minimize their generalized user costs including various time components in the lower level problem.
Non-linear beeline distance fares are designed in \cite{Huang2016} while simultaneously optimizing the service frequency.
The problem is modeled as a three-player game between the transport authority, the transit enterprise and the passengers. 
The authors show that the problem is NP-hard and propose an artificial bee colony algorithm to solve it.

\paragraph*{Fare Planning for Zone-based Fare Structures}
Research on the design of zone-based fare structures has started with \cite{Hamacher1995,Schoebel1996,Hamacher2004}.
They introduce the objective of minimizing the deviation from given reference prices as applied in this paper.
The sum of absolute and squared deviations is considered, but the main focus lies on minimizing the maximum absolute deviation, where passengers minimize the number of traversed zones.
An arbitrary pricing, where the price between each pair of zones is set individually, is applied as well as a counting-zones pricing with multiple counting.
While the prices can be determined by an explicit formula once the zones are given, the integrated problem of determining zones and prices simultaneously is shown to be NP-hard for the objective of minimizing the maximum absolute deviation for all fixed numbers of zones greater than or equal to three.
A greedy, a clustering and a spanning-tree heuristic for the problem of determining zones are presented.
These results are expanded by \cite{Babel2003} which considers the zone tariff design problem with arbitrary pricing, connected zones and minimizing the maximum absolute deviation.
The paper further investigates the complexity regarding NP-completeness and polynomial cases on special graph structures.
In \cite{Pratelli2004}, the problem of finding zones and prices is formulated as a bilevel program that is solved with a simulated annealing heuristic.
The objective is to minimize a weighted sum of revenue increase and decrease.
Furthermore, the authors of \cite{TavaresPereira2007} point out that districting problems occur in the context of different application areas, in particular, as the problem of finding zones for a zone-based fare structure.
The districting problem is modeled as a multicriteria problem, and a local search evolutionary algorithm is proposed to solve it.
In \cite{Kohani2013}, an integer program is presented with the aim to determine zones for fixed prices given as a base amount and a price per zone that minimize the maximum or average absolute deviation between zone fares and reference prices, which are given by former distance fares.
The constructed zones need not be connected.
Also for given prices, \cite{Conejero2014} searches for a zone partition with connected zones along a linear graph that maximizes the revenue, where the fares are determined according to a counting-zones pricing.
A tree-based model is used to enumerate the options for the zones.
\cite{Yang2020} faces a similar problem and solves it with a local search method moving zone borders.
The prices are then chosen as average former ticket costs between pairs of zones.
Another local search heuristic with tabu search is developed by \cite{Galligari2017} for the connected zone design problem with given prices on a general graph. 
Fares are computed according to a counting-zones pricing and the objective function is to minimize the maximum absolute deviation from reference prices.
The local search heuristic improves the found solutions over the heuristics from \cite{Hamacher2004}.
The authors of \cite{Azadian2018} encounter the problem of finding zones and setting prices (arbitrary pricing) in the context of air cargo.
The goal is to maximize the revenue.
The problem is solved to optimality by Benders decomposition, and a branch-and-bound method is developed that outperforms the Benders decomposition.
The theoretical research of \cite{Hamacher2004} and \cite{Babel2003} is further extended by \cite{Otto2017}.
The authors model different zone-based fare strategies, namely with connected or ring zones and with counting-zones pricing (single counting), cumulative pricing or maximum pricing.
The objective function maximizes the revenue considering the willingness to pay of the customers in the constraints.
For these problems, mixed-integer linear programming formulations are provided and the complexity is analyzed.
Recently, \cite{Mueller2022} tackled the problem of determining connected zones with a counting-zones pricing (multiple counting) by modeling the problem in the dual graph.
An integer programming formulation and a heuristic are developed during which the prices are kept fixed.
Iterating over a list of price options, the goal is to find the zone tariff that maximizes the revenue or demand. 
The authors propose an option to enforce certain spatial patterns such as rings and stripes.

\paragraph*{General Fare Planning}
A very general model for fare planning that can deal with different fare strategies and objectives is proposed by \cite{Borndoerfer2012}.
Also monthly and reduced fares can be incorporated in the nonlinear optimization model based on a discrete route choice model.
In \cite{Schiewe2024a}, a bi-objective model suitable for various fare strategies that maximizes the revenue as well as the number of passengers is presented.
Additional specialized solution methods are provided for flat and affine distance tariffs.

\subsection{Our Contribution}
We expand the theoretical literature on fare structure design problems for flat, affine distance and zone tariffs with the objective to minimize the weighted sum of absolute deviations from reference prices. 
We identify the flat and affine distance tariff design problem as median problems and thus show the solvability in linear time.
Further, we analyze and compare different variants of the zone tariff design problem that appear in the literature.
The relationships of the optimal objective function values are summarized in \Cref{fig:relations-overview}.
Further, we investigate the complexity of the zone tariff design problem.
In particular, we show NP-hardness in general, and develop a polynomial algorithm for the price-setting subproblem with fixed zones satisfying the no-elongation property.
An overview of the complexity results is given in \Cref{tab:complexity-results}.

The remainder of the paper is structured as follows: 
In \Cref{sec:general-problem-description}, we introduce the general setting and problem description. \Cref{sec:flat-tariff} covers the results on the flat tariff design problem. 
The determination of distance tariffs, which are in this paper affine network and beeline distance tariffs, is treated in \Cref{sec:distance-tariff}.
In \Cref{sec:zone-based}, we formulate the zone tariff design problem, examine its variants and analyze its complexity.
Finally, we provide a mixed-integer linear programming formulation for the zone tariff design problem.
\Cref{sec:conclusion} contains the conclusion.

\section{General Problem Description}
\label{sec:general-problem-description}
In this section, we introduce the basic knowledge, terminology and notation necessary for the later analysis.
Some definitions have previously been introduced in \cite{Schoebel2022}.

A public transport network (PTN) $G=(V,E)$ is an undirected, connected, simple graph.
The node set $V$ represents a set of stops or stations, and the edge set $E$ represents the direct connections between them.
A subset of nodes $Z\subseteq V$ is called connected if its induced subgraph~$G[Z]$ is connected.
A path is a finite sequence $(v_1,e_1,v_2,\ldots,e_{n-1},v_n)$ of nodes and edges.
It is simple if no edge occurs more than once, and it is elementary if no node occurs more than once.
We denote the set of all paths in the PTN by $\cW$.
In public transport, paths are often elementary.
Nevertheless, if not stated explicitly, the results of this paper hold for arbitrary paths.
Because we assume that a PTN does not have parallel edges or loops, a path is uniquely determined by its nodes, which allows to denote it as a sequence of its nodes $(v_1,\ldots,v_n)$.
The PTN can be used to model railway, tram, or bus networks.
In the following, we call the nodes of the PTN stations, even if bus networks with stops are under consideration.

\begin{definition}[Fare structure \cite{Schoebel2022}]
	Let a PTN be given, and let $\cW$ be the set of all paths in the PTN.
	A \emph{fare structure} is a function $\pi\colon \cW \rightarrow \RR_{\geq 0}$ that assigns a price to every path in the PTN.
\end{definition}

A \emph{fare strategy} stipulates requirements (constraints) on a fare structure.
A fare structure $\pi$ is of a certain fare strategy if $\pi$ satisfies the corresponding strategy constraints.
In the following, we study the design of fare structures regarding flat, affine distance, affine beeline and counting zones fare strategies.

After deciding on a desired fare strategy, we take
\begin{itemize}
	\item a PTN $(V,E)$ as an undirected, connected, simple graph,
	\item a set of OD pairs $D\subseteq (V \times V) \setminus \{(v,v): v\in V\}$,
	\item reference prices $r_d\in \RR_{\geq 0}$ and passenger numbers $t_d \in \NN_{\geq 1}$ with fixed paths $W_d \in \cW$ (for example, time-shortest paths) for all OD pairs $d\in D$
\end{itemize}
as input to the corresponding fare structure design problem.
Depending on the desired fare strategy, more specific input like distances between stations or a limit on the number of zones to be established might be necessary as we will see in the sections dedicated to the different fare strategies.

In the context of this paper, the fare structure design problem is formally defined as follows:
\begin{definition}[Fare structure design problem]
	Given a PTN $(V,E)$, a set of OD pairs $D$ with reference prices $r_d \in \RR_{\geq 0}$, passenger numbers $t_d\in \NN_{\geq 1}$ and fixed paths $W_d\in \cW$ for all $d\in D$ as well as potentially specific input depending on the desired fare strategy, determine a fare structure $\pi$ regarding the desired fare strategy that minimizes the sum of absolute deviations from the reference prices, this means it minimizes $\sum_{d\in D} t_d \vert r_d - \pi(W_d) \vert$.
\end{definition}

In addition, to establish fairness and consistency, we want to design fare structures that satisfy the no-elongation property and the no-stopover property introduced in \cite{Schoebel2020,Schoebel2022}.
For that, let $(v_1,\ldots, v_n)\in \cW$ be a path given as a sequence of nodes $v_1,\ldots, v_n \in V\!$.
\begin{definition}[No-elongation property and no-stopover property \cite{Schoebel2022}]
	\label{definition:properties}
	Let a PTN be given, and let $\cW$ be the set of all paths in the PTN.
	\begin{enumerate}
		\item A fare structure $\pi$ satisfies the \emph{no-elongation property} if 
		\[{\pi((v_1,\ldots,v_{n-1}))\leq \pi((v_1,\ldots,v_n))}\]
		for all paths $(v_1,\ldots,v_n) \in \cW$, $n\geq 2$.
		\item A fare structure $\pi$ satisfies the \emph{no-stopover property} if 
		\[ \pi((v_1,\ldots,v_n)) \leq \pi((v_1,\ldots,v_i))+\pi((v_i,\ldots,v_n)) \]
		for all paths $(v_1,\ldots,v_n) \in \cW$, $n\geq 3$, and intermediate stations $v_i$ with ${i \in \{2,\ldots,n-1\}}$.
	\end{enumerate}
\end{definition}
The no-elongation property ensures that the fare for a path is not allowed to be cheaper than the fare for any sub-path.
The no-stopover property ensures that it is not beneficial to split a ticket into several ones.
This means that buying several tickets to cover a path or making a stopover never decreases the total fare that has to be paid for a path.
From the perspective of the passengers, the properties yield transparency and make the fare structure easier to understand.
As shown in \cite[Thm.~1]{Schoebel2022}, it is cheapest to buy a ticket for exactly the path intended if both properties are satisfied.
From the perspective of a public transport operator this yields consistency in the sense that passengers cannot undercut the fares.
This is, for example, relevant to correctly estimate the revenue.

The fare structure design problem minimizing the sum of absolute deviations from reference prices is closely related to the weighted median problem as we show in the following sections.
We therefore recall the definition of a weighted median.
\begin{definition}[Weighted median]
	\label{definition:weighted-median}
	Given an index set $D$, the set of weighted medians of numbers~$(r_d)_{d\in D}$ with weights~$(t_d)_{d\in D}$ denoted by $\weightedMedian_{d\in D}(r_d,t_d)$ contains all values~$\bar{p}$ that satisfy
	\begin{equation}
		\label{eq:weighted-median}
		\sum_{d\in D:\, r_d < \bar{p}} t_d \leq \frac{\sum_{d\in D} t_d}{2} \quad \text{and} \quad \sum_{d\in D:\, r_d > \bar{p}} t_d \leq \frac{\sum_{d\in D} t_d}{2}.
	\end{equation}
\end{definition}
We use the shorthand notation $\weightedMedian(D)$ if the numbers $(r_d)_{d\in D}$ and weights $(t_d)_{d\in D}$ are clear from the context.
Instead of taking a weighted median one can equivalently consider the median of the values $(\underbrace{r_d,\ldots,r_d}_{t_d \text{ times}}: d\in D)$.
Also note that $\weightedMedian_{d\in D}(r_d,t_d)$ can consist of a single value or, because \eqref{eq:weighted-median} is a convex condition, a whole interval $[\bar{p}_1,\bar{p}_2]$, where $\bar{p}_1$ satisfies the first inequality of~\eqref{eq:weighted-median} with strict inequality and the second with equality, and vice versa for $\bar{p}_2$.
Then $\bar{p}_1$ is called the lower weighted median and $\bar{p}_2$ the upper weighted median.

In the following sections, we consider the fare structure design problems regarding flat, distance and zone fare strategies.

\section{Flat Tariffs}
\label{sec:flat-tariff}
The simplest fare strategy is the flat tariff, in which all paths are assigned the same fixed price.
On the one hand, it is easy to understand and to apply.
On the other hand, it can be perceived as unfair because short trips are as expensive as long trips.

\begin{definition}[Flat tariff \cite{Schoebel2022}] 
	Let a PTN be given, and let $\cW$ be the set of all paths in the PTN.
	A fare structure $\pi$ is a \emph{flat tariff} w.r.t.\ the fixed price $f \in \RR_{\geq 0}$ if $\pi(W)=f$ for all~$W \in \cW$.
\end{definition}

\begin{definition}[Flat tariff design problem]
	Given are a set of OD pairs $D$ as well as a reference price $r_d\in \RR_{\geq 0}$ and the number of passengers $t_d \in \NN_{\geq 1}$ for all $d\in D$.
	The \emph{flat tariff design problem} aims to determine a price $f\in \RR_{\geq 0}$ such that $\sum_{d\in D} t_d \vert r_d - f\vert$ is minimized.
\end{definition}

If we drop the non-negativity requirement $f\geq 0$ from the flat tariff design problem, we obtain a \emph{weighted median problem} as used in statistics (e.g., \cite{Gurwitz1990}), or, equivalently, a \emph{one-dimensional 1-median problem}, where especially the two-dimensional version is well known as \emph{Weber problem}, as used in location theory (e.g., \cite{Plastria1995,DKSW01}).

The function $\varphi\colon \RR \to \RR_{\geq 0}, f \mapsto \sum_{d\in D} t_d \vert r_d - f\vert$ is a continuous, piece-wise linear and convex function, for which it is known that the set of optimal solutions to the weighted median (or one-dimensional 1-median) problem is equal to the set of weighted medians $\weightedMedian_{d\in D}(r_d,t_d)$ (e.g., \cite{Gurwitz1990}).
Hence, for all optimal solutions $f^{\ast}$, it holds that $f^{\ast}\geq \min\{r_d: d\in D\}$, and there is always an optimal solution $f^{\ast}$ with $f^{\ast}\in \{r_d: d\in D\}$.
Because $\min\{r_d: d\in D\}\geq 0$, the non-negativity requirement of the flat tariff design problem is not necessary and can be omitted.
Therefore, we can regard the flat tariff design problem as a weighted median problem.
In case that the weighted median is not unique, the lower median $f_1$ leads to lower prices for the passengers, whereas the operator generates a higher income by implementing the price $f_2$ of the upper median.
These two values as well as all $f\in [f_1,f_2]$ yield the same objective function value because we consider the sum of absolute deviations.

The following well-known linear programming (LP) formulation uses a variable $f\in \RR$ for the fixed price and an auxiliary variable $y_d\in \RR$ for all~$d\in D$ to linearize the objective function:
\begin{equation}
	\label{lp:flat}
	\begin{alignedat}{3}
		&\min_{f,y_d} 	& \sum_{d\in D} t_d y_d& &&\\
		&\text{ s.t. } 	& r_d-f &\leq y_d &\quad& \textup{for all } d\in D\\
		&				& f-r_d &\leq y_d && \textup{for all } d\in D\\
		&				& f		&\geq 0	  &&\\
		&				& f,y_d		&\in \RR	  &&\textup{for all } d\in D.
	\end{alignedat}
\end{equation}

Apart from solving the LP formulation in polynomial time, also specialized solution methods for finding a (weighted) median can be used. 
A linear time selection method, which is in particular able to find a median, is developed by \cite{BlumEtAl73}.
Several methods to compute a weighted median in linear time are reviewed by \cite{Gurwitz1990}.
Consequently, the flat tariff design problem can be solved in linear time $\cO(\vert D\vert)$.

From \cite[Thm.~12]{Schoebel2020}, we know that a flat tariff always satisfies the no-elongation property and the no-stopover property.

\section{Distance Tariffs}
\label{sec:distance-tariff}

Distance-based fare structures are widely perceived as fair because the ticket price correlates with the distance between the start and the end station.
In a network distance tariff, the length of the actual path in the PTN is used to determine the ticket price, whereas in a beeline distance tariff, the Euclidean distance (``beeline distance'') between the start and the end station is used.
In particular, in a beeline distance tariff, the price of a ticket is independent of the actual path but depends just on its start and end station.

Let a PTN $(V,E)$ with edge lengths $l_e\in \RR_{> 0}$ for all edges~$e\in E$ and paths~$W_d\in \cW$ for all OD pairs $d\in D$ be given.
Further, we assume that the stations $V$ of the PTN are embedded in the plane such that the Euclidean distance $\dist_2$ between every pair of stations can be computed.
Let a path $W \in \cW$ have the start station~$v_1 \in V$ and the end station $v_2\in V\!$. 
We set $l(W) \defeq \sum_{e\in E(W)} l_e$ in a network distance tariff and $l(W) \defeq \dist_2(v_1,v_2)$ in a beeline distance tariff.

\begin{definition}[Affine distance tariff \cite{Schoebel2022}]
	Let a PTN be given, and let $\cW$ be the set of all paths in the PTN.
	Let $l\colon \cW \to \RR_{\geq 0}$ be a function, which determines the network distance or the beeline distance of a path (see above).
	A fare structure~$\pi$ is an \emph{affine distance tariff} w.r.t.\ a price per kilometer $p \in \RR_{\geq 0}$ and a base amount $f\in \RR_{\geq 0}$ if $\pi(W)=p\cdot l(W) +f$ for all~${W \in \cW}$.
\end{definition}

For the optimization of affine distance tariffs, we consider that each OD pair $d\in D$ travels along a fixed path $W_d\in \cW$.
Hence, each OD pair is associated with a distance $l_d \defeq l(W_d)$ based on the function $l$ measuring the distance.

\begin{definition}[Affine distance tariff design problem]
	Given are a set of OD pairs $D$ as well as a reference price $r_d\in \RR_{\geq 0}$, the number of passengers $t_d \in \NN_{\geq 1}$ and distances $l_d \in \RR_{\geq 0}$ for all OD~pairs~$d\in D$.
	The \emph{affine distance tariff design problem} aims to determine a price per kilometer $p\in \RR_{\geq 0}$ and a base amount~$f\in \RR_{\geq 0}$ such that $\sum_{d\in D} t_d \vert r_d - (p\cdot l_d + f)\vert$ is minimized.
\end{definition}

Note that in contrast to the flat tariff design problem, for distance tariffs neither $p\geq 0$ nor $f\geq 0$ is ensured by non-negative reference prices $r_d\geq 0$ for all $d\in D$.

When we allow negative values for $p$ and $f$, i.e., $p, f \in \RR$, the affine distance tariff design problem is a \emph{least absolute deviations regression} with the linear expression $p\cdot x + f $ as used in statistics (e.g., \cite{Karst1958,Wagner1959}), or equivalently, a \emph{1-median-line location problem with vertical distances} as used in location theory (e.g., \cite{Megiddo1983,Schoebel1999}).
This means, one searches for a line \[L_{p,f} \defeq \{(x,y)\in \RR^2: y=p\cdot x+f\}.\] 

In \Cref{prop:distance-finite-candidate-set}, we derive a finite dominating set for the affine distance tariff design problem.
We say that a reference price $r_d$ for some OD pair $d\in D$ is met exactly by a distance tariff with price per kilometer~$p$ and base amount~$f$ if $r_d = p\cdot l_d +f$, i.e., the line $L_{p,f}$ passes through the point $(l_d,r_d)$.
\begin{theorem}
	\label{prop:distance-finite-candidate-set}
	There is always an optimal solution $(p^{\ast},f^{\ast})$ to the affine distance tariff design problem such that one of the following holds:
	\begin{itemize}
		\item two reference prices are met exactly,
		\item one reference price is met exactly and, additionally, $p^{\ast}=0$ or $f^{\ast}=0$,
		\item $p^{\ast}=0$ and $f^{\ast}=0$.
	\end{itemize}
\end{theorem}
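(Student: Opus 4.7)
The plan is to view the affine distance tariff design problem as the minimization of a piecewise linear convex function over the polyhedron $Q = \{(p,f) : p \geq 0,\, f \geq 0\}$ and to argue that an optimum can always be found at a vertex of a natural line arrangement restricted to $Q$.

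First, I would observe that the objective $\varphi(p,f) = \sum_{d \in D} t_d |r_d - p l_d - f|$ is a sum of absolute values of affine functions, hence continuous, convex and piecewise linear on $\RR^2$. Its breakpoints lie on the lines $H_d = \{(p,f) : p l_d + f = r_d\}$ for $d \in D$; on each cell of the arrangement of these lines, $\varphi$ coincides with an affine function. I would also verify coercivity of $\varphi$ on $Q$ so that a minimum is attained: letting $f \to \infty$ with $p$ fixed makes every summand eventually grow, and letting $p \to \infty$ with $f \geq 0$ fixed does the same whenever some $l_d > 0$; if $l_d = 0$ for all $d$, the problem collapses to the flat tariff case already solved in \Cref{sec:flat-tariff} and one can take $p^* = 0$.

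Second, I would augment the arrangement by the boundary lines of $Q$, setting $\mathcal{H} = \{H_d : d \in D\} \cup \{\{p=0\},\, \{f=0\}\}$. The lines in $\mathcal{H}$ subdivide $Q$ into convex polyhedral cells on which $\varphi$ is affine. By the standard fact that a linear function attains its minimum over a polyhedron at a vertex whenever that minimum is finite, together with the coercivity of $\varphi$ on $Q$, there exists an optimal solution $(p^*,f^*)$ that is a vertex of $\mathcal{H}$ lying in $Q$; such a vertex is necessarily the intersection of (at least) two lines in $\mathcal{H}$.

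Finally, I would enumerate the possible intersection types and show they correspond to the three cases of the theorem: (i) if two lines $H_d$ and $H_{d'}$ with $d \neq d'$ meet at the vertex, then $p^* l_d + f^* = r_d$ and $p^* l_{d'} + f^* = r_{d'}$, so two reference prices are met exactly; (ii) if $H_d$ meets $\{p = 0\}$, then $p^* = 0$ and $f^* = r_d$, while if $H_d$ meets $\{f = 0\}$ with $l_d > 0$, then $f^* = 0$ and $p^* = r_d/l_d$, so in both sub-cases one reference price is met exactly and additionally $p^* = 0$ or $f^* = 0$; (iii) if $\{p = 0\}$ meets $\{f = 0\}$, then $(p^*,f^*) = (0,0)$. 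The main obstacle is the careful handling of degenerate situations in the second step, namely when an optimal cell is unbounded or when $\varphi$ is constant along an edge of a cell: in both cases one uses coercivity on $Q$ to rule out recession directions along which $\varphi$ strictly decreases, and convexity to slide along any $\varphi$-constant edge to one of its vertices inside $Q$, preserving optimality.
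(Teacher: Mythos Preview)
Your proposal is correct and is essentially the paper's own argument: both partition the first quadrant in $(p,f)$-space by the lines $H_d=\{(p,f): p\,l_d+f=r_d\}$ together with the coordinate axes, observe that the objective is affine on each resulting cell, and invoke the LP vertex principle to locate an optimum at a cell vertex. The paper arrives at this same arrangement via a point--line duality from location theory (mapping the data points $(l_d,r_d)$ to lines and the sought tariff line $L_{p,f}$ to a point in a ``dual'' plane), whereas you work directly in parameter space; the mathematical content is identical, and your route is arguably the more direct of the two.
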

\begin{proof}
	We regard the affine distance tariff design problem as 1-median-line location problem with the additional requirement that $p,f\geq 0$.
	For this proof, we adopt the dual interpretation from \cite[Sec.~2.2]{Schoebel1999}.
	We consider the transformation $T$ that maps a point $(l,r)\in \RR^2$ to a line $T((l,r)) \defeq L_{-l,r}$ and a non-vertical line $L_{p,f}$ to a point $T(L_{p,f}) \defeq (p,f)$.
	The space of the transformed points and lines is called \emph{dual} space.
	We call the original space the \emph{primal} space.
	An example is given in \Cref{fig:distance-dual-space-example}.
	The vertical deviation between a point~$(l,r)$ and a line~$L_{p,f}$ in the primal space is the same as the vertical deviation between the transformed line~$L_{-l,r}$ and the transformed point~$(p,f)$ in the dual space because $r-(p\cdot l+f) = r-p\cdot l -f = (-l\cdot p + r)-f$.
	Hence, it is equivalent to search for a line $L_{p,f}$ with $p,f\in \RR_{\geq 0}$ minimizing the sum of absolute deviations from the points $(l_d,r_d)$ for $d\in D$ in the primal space or to search for a point $(p,f)\in \RR_{\geq 0}\times \RR_{\geq 0}$ (i.e., in the first quadrant) that minimizes the sum of absolute deviations from the lines $L_{-l_d,r_d}$ for $d\in D$.
	The feasible space is divided into polyhedra (cells) by the lines $L_{-l_d,r_d}$ for $d\in D$ (see \Cref{fig:distance-dual-space-example-2}).
	We can determine an optimal solution to the overall problem by searching for an optimal solution in each cell and identifying the one with the best optimal objective function value.
	In each cell, the sign of $r_d-p\cdot l_d-f$ does not change for all $d\in D$ because no line is crossed, which means that the objective function can be written without the absolute value in each cell.
	Hence, in each cell, the problem is feasible, the objective function is linear and the optimal objective function value is finite.
	By the fundamental theorem of linear programming, in each cell, there is an optimal solution that is an extreme point of this cell.
	This is either the intersection of two lines, of a line with an axis, or the origin.
	Let $x^{\ast}=(p^{\ast},f^{\ast})$ be the best of all the optimal solutions of all cells.
	Interpreting the solution for the affine distance tariff design problem, this means that one of the following holds for the solution $x^{\ast}$ with price per kilometer~$p^{\ast}$ and base amount~$f^{\ast}$:
	\begin{itemize}
		\item two reference prices $r_{d_1}, r_{d_2}$, $d_1,d_2\in D$, are met exactly (if $x^\ast$ is the intersection $T((l_{d_1},r_{d_1}))\cap T((l_{{d_2}},r_{d_2}))$ of two lines in the dual space, in particular $l_{d_1}\neq l_{d_2}$),
		\item one reference price $r_d$, $d\in D$, is met exactly and $p^{\ast}=0$ (if $x^{\ast}$ is the intersection of $T((l_d,r_d))$ with the $f$-axis in the dual space),
		\item one reference price $r_d$, $d\in D$, is met exactly and $f^{\ast}=0$ (if $x^{\ast}$ is the intersection of $T((l_d,r_d))$ with the $p$-axis in the dual space),
		\item $p^{\ast}=0$ and $f^{\ast}=0$ (if $x^{\ast}=(0,0)$ is the origin in the dual space). \qedhere
	\end{itemize}
\end{proof}

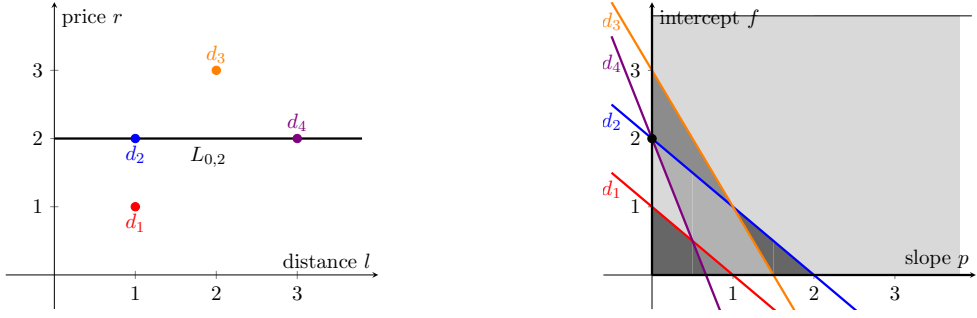
\begin{figure}
	\centering
	\begin{subfigure}[t]{0.48\textwidth}
		\centering
		\resizebox{5cm}{!}{
		\begin{tikzpicture}
			\begin{axis}[
				scale=1,
				domain=0:3.8,
				xmin=-0.6, xmax=4,
				ymin=-0.5, ymax=4,
				xtick ={0,1,2,3},
				ytick ={0,1,2,3},
				xlabel={distance $l$},
				ylabel={price $r$},
				samples=2,
				axis y line=center,
				axis x line=middle,
				]
				
				\addplot[black,mark=none,very thick] {2} node[pos=0.5, below] { $L_{0,2}$ };
				
				\addplot[red,only marks,mark=*,mark options={scale=1.1},text mark as node=true] coordinates {(1,1)} node[below] {$d_1$};
				\addplot[blue,only marks,mark=*,mark options={scale=1.1},text mark as node=true] coordinates {(1,2)} node[below] {$d_2$};
				\addplot[orange,only marks,mark=*,mark options={scale=1.1},text mark as node=true] coordinates {(2,3)} node[above] {$d_3$};
				\addplot[violet,only marks,mark=*,mark options={scale=1.1},text mark as node=true] coordinates {(3,2)} node[above] {$d_4$};
			\end{axis}
		\end{tikzpicture}}
		\caption{Primal space: points $(l_{d},r_{d})$ for four OD pairs $d\in D=\{d_1,\ldots,d_4\}$ and an optimal line $L_{0,2}$}
		\label{fig:distance-dual-space-example-1}
	\end{subfigure}
	\quad
	\begin{subfigure}[t]{0.48\textwidth}
		\centering
		\resizebox{5cm}{!}{
		\begin{tikzpicture}
			\begin{axis}[
				scale=1,
				domain=-0.5:3,
				xmin=-0.6, xmax=4,
				ymin=-0.5, ymax=4,
				xtick ={0,1,2,3},
				ytick ={0,1,2,3},
				xlabel={slope $p$},
				ylabel={intercept $f$},
				samples=2,
				axis y line=center,
				axis x line=middle,
				]
				
				\addplot[name path=d1, red,mark=none,very thick] {-1*x+1} node[pos=0, below] { $d_1$ };
				\addplot[name path=d2,blue,mark=none,very thick] {-1*x+2} node[pos=0, below] { $d_2$ };
				\addplot[name path=d3,orange,mark=none,very thick] {-2*x+3} node[pos=0, yshift={-0.35cm}] { $d_3$ };
				\addplot[name path=d4,violet,mark=none,very thick] {-3*x+2} node[pos=0, yshift={-0.5cm}] { $d_4$ };
				\addplot[name path=y,black,very thick] coordinates {(0,0)(0,3.95)};
				\addplot[name path=x,black,very thick] coordinates {(0,0)(3.95,0)};
				\addplot[name path=top,opacity=0] coordinates {(0,3.8)(3.95,3.8)};
				
				% Cell 1
				\addplot[fill=black!60!white] fill between[of=d1 and x,soft clip={domain=0:0.5}];
				\addplot[fill=black!60!white] fill between[of=d4 and x,soft clip={domain=.5:2/3}];
				
				% Cell 2
				\addplot[fill=black!15!white] fill between[of=d4 and d1,soft clip={domain=0:0.5}];
				
				% Cell 3
				\addplot[fill=black!45!white] fill between[of=d1 and d4,soft clip={domain=0.5:2/3}];
				\addplot[fill=black!45!white] fill between[of=d1 and x,soft clip={domain=2/3:1}];
				
				% Cell 4
				\addplot[fill=black!30!white] fill between[of=d2 and d4,soft clip={domain=0:0.5}];
				\addplot[fill=black!30!white] fill between[of=d2 and d1,soft clip={domain=0.5:1}];
				\addplot[fill=black!30!white] fill between[of=d3 and x,soft clip={domain=1:1.5}];
				
				% Cell 5
				\addplot[fill=black!45!white] fill between[of=d2 and d3,soft clip={domain=0:1}];
				
				%Cell 6
				\addplot[fill=black!60!white] fill between[of=d2 and d3,soft clip={domain=1:1.5}];
				\addplot[fill=black!60!white] fill between[of=d2 and x,soft clip={domain=1.5:2}];
				
				% Cell 7
				\addplot[fill=black!15!white] fill between[of=d3 and top,soft clip={domain=0:1}];
				\addplot[fill=black!15!white] fill between[of=d2 and top,soft clip={domain=1:2}];
				\addplot[fill=black!15!white] fill between[of=x and top,soft clip={domain=2:3.8}];
		
				% Mark optimal solution
				\addplot[black,only marks,mark=*,mark options={scale=1.1},text mark as node=true] coordinates {(0,2)};
			\end{axis}
		\end{tikzpicture}}
		\caption{Dual space: The four points $(l_d,r_d)$, $d\in D$ are transformed to four lines $L_{-l_d,r_d}$. The point $(0,2)$ is the optimal solution and can be transformed back to the optimal line $L_{0,2}$ in the primal space.}
		\label{fig:distance-dual-space-example-2}
	\end{subfigure}
	\caption{Example with four OD pairs with one passenger each illustrating the proof of \Cref{prop:distance-finite-candidate-set}. The optimal solution is $p^{\ast}=0$, $f^{\ast}=2$. This shows that the three options stated in \Cref{prop:distance-finite-candidate-set} are not mutually exclusive.}
	\label{fig:distance-dual-space-example}
\end{figure}

We remark that, if an optimal solution to the restricted problem ($p,f\in \RR_{\geq 0}$) is also an optimal solution to the unrestricted problem ($p,f\in \RR$), then 
\begin{equation*}
	\sum_{d\in D:\, r_d < p\cdot l_d+f} t_d \leq \frac{\sum_{d\in D} t_d}{2} \quad \text{and} \quad \sum_{d\in D:\, r_d > p\cdot l_d +f} t_d \leq \frac{\sum_{d\in D} t_d}{2}
\end{equation*}
by \cite{Schoebel1998, Schoebel1999}, similar to the weighted median (see \Cref{definition:weighted-median}).
This means that half of the passengers pay at most as much as their reference price and half of the passengers pay at least as much in this case.

The affine distance tariff design problem can be solved by means of the finite dominating set derived in \Cref{prop:distance-finite-candidate-set}.
It can also be formulated as an LP as follows:
\begin{equation}
	\label{lp:distance}
	\begin{alignedat}{3}
		&\min_{f,p,y_d} & \sum_{d\in D} t_d y_d \quad& &&\\
		&\text{ s.t. } 	& r_d-p\cdot l_d -f&\leq y_d &\quad& \textup{for all } d\in D\\
		&				& p\cdot l_d +f -r_d &\leq y_d && \textup{for all } d\in D\\
		&				& p,f		&\geq 0	  &&\\
		&				& p,f,y_d	&\in \RR	  && \textup{for all } d\in D.
	\end{alignedat}
\end{equation}
Using this LP and a result of \cite{Zemel1984}, we can even show that the problem can be solved in linear time $\cO(\vert D \vert)$:

\begin{theorem}
	\label{prop:distance-linear}
	The affine distance tariff design problem can be solved in linear time $\cO(\vert D \vert)$.
\end{theorem}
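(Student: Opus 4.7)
The plan is to recognize that \eqref{lp:distance}, although it has $\vert D\vert + 2$ variables, is in essence a linear program in only two real decision variables $p$ and $f$, and to invoke the linear-time algorithm of \cite{Zemel1984} for LPs with a fixed number of variables. As a first step I would eliminate the auxiliary variables by noting that in any optimum we have $y_d = \vert r_d - p\cdot l_d - f\vert$, so \eqref{lp:distance} is equivalent to
\[
	\min_{(p,f)\in \RR_{\geq 0}^2}\ \sum_{d\in D} t_d\, \vert r_d - p\cdot l_d - f\vert,
\]
which is a weighted $1$-median-line location problem with vertical distances restricted to non-negative slope and intercept (equivalently, a weighted least-absolute-deviations regression with two sign constraints).

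Next, I would bring this reduced problem back into standard LP form by reintroducing one slack variable per OD pair, obtaining exactly the structure of \eqref{lp:distance} but with the understanding that only two of the variables ($p$ and $f$) are ``real'' decision variables, while each $y_d$ appears in only two constraints and can be treated as a local slack. This is precisely the setting to which \cite{Zemel1984} applies: a linear program with a fixed number of decision variables and $\cO(\vert D\vert)$ constraints can be solved in $\cO(\vert D\vert)$ time. Applied here this yields the claimed complexity.

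The one step that needs care is verifying that the non-negativity constraints $p \geq 0$ and $f \geq 0$ do not take us outside of the fixed-dimension LP framework. Since they contribute only two additional halfplane constraints, this is immediate. As an alternative route, one could bypass \cite{Zemel1984} entirely and apply Megiddo's two-dimensional prune-and-search LP algorithm directly to the above reformulation; the finite dominating set from \Cref{prop:distance-finite-candidate-set} guarantees that an optimum lies at the intersection of two of the $\vert D\vert + 2$ constraint lines, so prune-and-search terminates in $\cO(\vert D\vert)$ steps with the same bound.
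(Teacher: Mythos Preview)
Your approach and the paper's both invoke \cite{Zemel1984}, so the overall strategy is the same. The paper, however, does the work you gloss over: it writes out the dual of the $2$-dimensional \MultipleChoiceLinearProgrammingProblem{} explicitly and then specializes the coefficients to show that \eqref{lp:distance} matches that form term by term. Your phrase ``each $y_d$ appears in only two constraints and can be treated as a local slack'' is exactly the structural feature that makes Zemel's framework applicable, but your summary of what Zemel proves (``a linear program with a fixed number of decision variables and $\cO(\vert D\vert)$ constraints'') is not accurate. That statement describes Megiddo's fixed-dimension LP algorithm, not Zemel's result; Zemel's algorithm is for the specific block structure of the MCLPP dual, where a constant number of linking variables ($w_1,w_2$ corresponding to your $p,f$) interact with many independent local variables ($v_k$ corresponding to your $y_d$). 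The paper's explicit identification of $J_0$, $J_k$, and the coefficients is what turns your intuition into a proof.

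Your alternative route via Megiddo's two-dimensional LP algorithm has a genuine gap: after eliminating the $y_d$, the objective $\sum_{d\in D} t_d\,\vert r_d - p\,l_d - f\vert$ is piecewise linear, not linear, so the problem is not an LP in two variables and Megiddo's standard prune-and-search does not apply directly. One needs precisely an extension of the type Zemel provides (or a specialized prune-and-search for sums of absolute values, as in the $1$-median-line literature). The finite dominating set from \Cref{prop:distance-finite-candidate-set} tells you where an optimum lives but does not by itself give a linear-time selection procedure.
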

\begin{proof}
	In \cite{Zemel1984}, the author presents an algorithm that solves the dual of the $s$-dimensional \MultipleChoiceLinearProgrammingProblem{} ($s$MCLPP) in linear time with respect to the number of constraints.
	We now show that the distance tariff design problem is of the required form for the algorithm and can hence be solved in linear time $\cO(\vert D \vert)$.
	For that, we consider $2$MCLPP (i.e., $s=2$):
	\begin{maxi*}
		{x_j}{\sum_{j\in N} c_j x_j}{}{} 
		\addConstraint{\sum_{j\in N} a^i_j x_j}{= a_0^i}{\quad\textup{for all } i \in \{1,2\}}
		\addConstraint{\sum_{j\in J_k} b_j x_j}{= b_0^k}{\quad\textup{for all } k\in \{1,\ldots,r\}}
		\addConstraint{x_j}{\geq 0}{\quad\textup{for all } j\in N}
		\addConstraint{x_j}{\in \RR}{\quad\textup{for all } j\in N.}
	\end{maxi*}
	with $r\in \NN_{\geq 1}$, $N=J_0 \cup J_1\cup \ldots \cup J_r$, the sets $J_k$, $k\in \{0,\ldots,r\}$ are pairwise disjoint and $0\notin N$. 
	Dualizing the linear program yields
	\begin{mini*}
		{w_i,v_k}{a_0^1 w_1 + a_0^2 w_2 + \sum_{k=1}^{r} b_0^k v_k}{\label{lp:zemel}}{} 
		\addConstraint{a_j^1w_1 + a_j^2 w_2}{\geq c_j}{\quad\textup{for all } j \in J_0}
		\addConstraint{a_j^1w_1 + a_j^2 w_2 + b_j v_k}{\geq c_j}{\quad\textup{for all } j \in J_k, k\in \{1,\ldots, r\}}
		\addConstraint{w_i, v_k}{\in \RR}{\quad\textup{for all } i\in \{1,2\}, k\in \{1,\ldots,r\}.}
	\end{mini*}
	We consider the special case with $J_0 = \{1,2\}$, $a_0^1 = a_0^2 =0$, $a_1^1 = a_2^2 =1$, $a_2^1 = a_1^2 = 0$, $c_1=c_2=0$, $b_j=1$ for all $j\in J_k$, $k\in \{1,\ldots,r\}$,
	which yields
	\begin{mini*}
		{w_i,v_k}{\sum_{k=1}^{r} b_0^k v_k}{\label{lp:zemel}}{} 
		\addConstraint{v_k}{\geq c_j -a_j^1 w_1 - a_j^2 w_2}{\quad\textup{for all } j \in J_k}
		\addConstraint{w_1,w_{2}}{\geq 0}{}
		\addConstraint{w_i, v_k}{\in \RR}{\quad\textup{for all } i\in \{1,2\}, k\in \{1,\ldots,r\}.}
	\end{mini*}
	By setting the sets $J_k, k\in \{1,\ldots,r\}$ and the coefficients appropriately, we obtain LP~\eqref{lp:distance}.
	Hence, by \cite{Zemel1984}, LP~\eqref{lp:distance} can be solved in linear time in the number of constraints, i.e., in $\cO(\vert D \vert)$.
\end{proof}
  
In \cite[Thm.~15, Thm.~17]{Schoebel2020}, it is shown that an affine network distance tariff always satisfies the no-elongation property and the no-stopover property, whereas an affine beeline distance tariff only satisfies the no-stopover property.
The no-elongation property is intrinsically not satisfied and can only be mitigated by using a check-in/check-out system.

\section{Zone Tariffs}
\label{sec:zone-based}

Zone-based fare structures combine the properties of flat and distance-based fare structures.
The region of the PTN is divided into tariff zones.
In a \emph{counting-zones pricing}, the price of a path depends on the number of zones traversed by the path.
This means that within a zone (as well as for each fixed number of zones) a flat tariff is applied while on a general path the distance is approximated by the number of traversed zones.
We distinguish between two types of zone tariffs: with multiple counting and with single counting.
In the multiple counting case, we count a zone each time that it is entered, as it is for example done in Boston (USA) by MBTA for commuter rail.
In the single counting case, each zone is counted at most once, which is for example used in Greater Copenhagen (Denmark) by DOT and by many German transport associations, e.g., VRN and saarVV.

Let a PTN $(V,E)$ be given.
Formally, we regard the tariff zones as a zone partition $\cZ=\{Z_1,\ldots,Z_N\}$ of the set of stations~$V\!$, i.e., $V=\bigcup_{i\in\{1,\ldots,N\}} Z_i$ and the $Z_i$ are pairwise disjoint. 
We define a \emph{zone function} $\sigma$, which counts the number of traversed zones on any path $W\in \cW$ with its set of nodes $V(W)$ and its set of edges $E(W)$.
It is different for the two ways of counting.

\textbf{Multiple Counting:}
	Let $e=\{v_1,v_2\}\in E$ be an edge.
	We define the zone border weight
	\[b(e)=b(v_1,v_2) \defeq \begin{cases} 	
		0 &\text{if } v_1 \text{ and } v_2 \text{ are in the same zone},\\
		1 & \text{otherwise.}\end{cases}\]
	From that, we derive for a path $W \in \cW$ the \emph{zone function}
	\begin{equation*}	
		\sigma(W) \defeq 1 + b(W), \mbox{ where } b(W) \defeq \sum_{e\in E(W)} b(e). \label{eq:z}
	\end{equation*}
\textbf{Single Counting:}
For every path $W \in \cW$, the \emph{zone function} that counts the number of \emph{different} zones that are traversed is defined as
\begin{equation*}\label{eq:zbar}
	\sigma(W) := \left\vert \{Z\in \cZ: V(W) \cap Z \neq \emptyset\} \right\vert.
\end{equation*}

\begin{definition}[Zone tariff \cite{Schoebel2022}]
	Let a PTN be given, and let~$\cW$ be the set of all paths in the PTN.
 	A fare structure $\pi$ is a \emph{zone tariff with multiple/single counting} w.r.t.\ a zone partition $\cZ$ and a price function $P \colon \NN_{\geq 1} \rightarrow \RR_{\geq 0}$ if $\pi(W)=P(\sigma(W))$ for all $W \in \cW$.
\end{definition}

While zone tariffs with a counting-zones pricing presented here are very popular, note that many additional specifications of zone-based fare structures exist, for example: special metropolitan zones, overlapping zones, empty zones, cumulative or maximum pricing with individual prices for the zones.

\begin{definition}[Zone tariff design problem]
	\label{definition:zone-tariff-design-problem}
	Given are a PTN $(V,E)$, a set of OD pairs $D$, reference prices $r_d\in \RR_{\geq 0}$, numbers of passengers $t_d \in \NN_{\geq 1}$ and paths $W_d\in \cW$ for all $d\in D$ and an upper bound $N\in \NN_{\geq 1}$ on the number of zones.
	The \emph{zone tariff design problem} aims to determine a zone partition $\cZ$ with at most $N$ zones and a price function $P\colon \NN_{\geq 1} \to \RR_{\geq 0}$ such that $\sum_{d\in D} t_d \vert r_d - P(\sigma(W_d))\vert$ is minimized. 
	Note that $\sigma$ depends on the zone partition~$\cZ$.
\end{definition}

\begin{remark}
	\label{remark:zone-price-list-k}
	It suffices to consider $N\in \{1,\ldots, \vert V\vert\}$ because the zones form a zone partition and it is not possible to have more than $\vert V\vert$ sets in the partition.
	Similarly, it is enough to determine the price function $P$ for input values up to $K$, where $K$ is the maximum number of nodes of a path $W_d$ for $d\in D$.
	Because $P(k)$ has no influence on the zone tariff design problem for all $k\geq K$, we simply set $P(k) = P(K)$ for all $k\geq K$.
	Therefore, a price function $P$ can be represented by a price list $(p_1,\ldots,p_K)$.
\end{remark}

In addition to distinguishing between multiple and single counting, we also distinguish two versions concerning the zones.
Given a PTN $G=(V,E)$, we say that a zone $Z\subseteq V$ is connected if its induced subgraph~$G[Z]$ is connected.
While there are no requirements in the problem formulation in \Cref{definition:zone-tariff-design-problem}, which allows \emph{arbitrary zones}, we also consider that \emph{connected zones} are demanded.
This means that each zone $Z\in\cZ$ needs to be connected.
We obtain the four variants shown in \Cref{tab:four-variants}.

\begin{table}[tbp]
	\begin{center}
		\begin{tabular}{lccc}
			\toprule
			&& multiple counting & single counting\\
			&& (M)	&	(S)\\
			\midrule
			arbitrary zones &(A)& \ZDMA{} & \ZDSA{} \\
			\addlinespace
			connected zones &(C)& \ZDMC{} & \ZDSC{} \\
			\bottomrule
		\end{tabular}
	\end{center}
	\caption{Variants of the zone tariff design problem.}
	\label{tab:four-variants}
\end{table}

Further, we consider the zone tariff design problem with and without the requirement that the resulting fare structure satisfies the no-elongation property or the no-stopover property.
The sufficient conditions for the no-elongation property and the no-stopover property in \Cref{prop:zone-properties} are independent of the PTN and the zone partition $\cZ$ and only depend on the price function~$P$.

\begin{theorem}[\cite{Urban2020,Schoebel2022}]
	\label{prop:zone-properties}
	Let a PTN, a zone partition $\cZ$ and price function $P$ be given.
	\begin{enumerate}		
		\item The zone tariff (with multiple or single counting) w.r.t.\ $\cZ$ and $P$ satisfies the no-elongation property if $P$ is monotonically increasing.
		\item The zone tariff with multiple counting w.r.t.\ $\cZ$ and~$P$ satisfies the no-stopover property if
		\begin{equation*}
			P(k) \leq P(i) + P(k-i+1) \mbox{ for all } k \in \NN_{\geq 1},\, i \in \{1,\ldots, k\}.
		\end{equation*}
		\item The zone tariff with single counting w.r.t.\ $\cZ$ and $P$ satisfies the no-stopover property if
		\begin{equation*}
			P(k)\leq P(i_1) + P(i_2) \mbox{ for all } k\in \NN_{\geq 1}, i_1, i_2 \in \{1,\ldots, k\} \mbox{ with } i_1+i_2 \geq k+1.
		\end{equation*}
	\end{enumerate}
\end{theorem}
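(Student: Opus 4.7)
The plan is to reduce each of the three claims about $\pi$ to a claim about the zone function $\sigma$, then invoke the hypothesis on $P$. The key properties of $\sigma$ I will exploit are (i) monotonicity under path extension, (ii) for multiple counting, an exact decomposition at a common intermediate node with an off-by-one correction, and (iii) for single counting, the inclusion–exclusion identity on zone sets, again with a correction coming from the fact that the splitting station lies in both sub-paths.

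\textbf{Part 1 (no-elongation).} I would first show that for both counting variants $\sigma((v_1,\ldots,v_{n-1})) \leq \sigma((v_1,\ldots,v_n))$. For multiple counting this is immediate from $\sigma(W) = 1 + \sum_{e \in E(W)} b(e)$ with $b(e) \geq 0$, since the extended path has one more edge. For single counting this follows because the set of zones intersected by $(v_1,\ldots,v_{n-1})$ is a subset of that intersected by $(v_1,\ldots,v_n)$. Combining this with monotonicity of $P$ yields $\pi((v_1,\ldots,v_{n-1})) = P(\sigma(\cdot)) \leq P(\sigma(\cdot)) = \pi((v_1,\ldots,v_n))$.

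\textbf{Part 2 (no-stopover, multiple counting).} Fix $(v_1,\ldots,v_n)$ and an intermediate $v_i$, and set $W_1 = (v_1,\ldots,v_i)$, $W_2 = (v_i,\ldots,v_n)$. Since $E(W_1)$ and $E(W_2)$ partition $E((v_1,\ldots,v_n))$, summing $b$ over edges gives $\sigma((v_1,\ldots,v_n)) = \sigma(W_1) + \sigma(W_2) - 1$. Writing $k \defeq \sigma((v_1,\ldots,v_n))$ and $i' \defeq \sigma(W_1)$, we have $i' \in \{1,\ldots,k\}$ and $\sigma(W_2) = k - i' + 1$. The hypothesis $P(k) \leq P(i') + P(k-i'+1)$ then reads exactly $\pi((v_1,\ldots,v_n)) \leq \pi(W_1) + \pi(W_2)$.

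\textbf{Part 3 (no-stopover, single counting).} With $W_1, W_2$ as above, let $\mathcal{Z}_j \defeq \{Z \in \mathcal{Z} : V(W_j) \cap Z \neq \emptyset\}$ for $j=1,2$. The set of zones intersected by $(v_1,\ldots,v_n)$ equals $\mathcal{Z}_1 \cup \mathcal{Z}_2$. The crucial observation is that $v_i$ belongs to $V(W_1) \cap V(W_2)$, so the zone $Z(v_i)$ containing $v_i$ lies in $\mathcal{Z}_1 \cap \mathcal{Z}_2$, whence $|\mathcal{Z}_1 \cap \mathcal{Z}_2| \geq 1$. Setting $i_1 \defeq |\mathcal{Z}_1|$, $i_2 \defeq |\mathcal{Z}_2|$, $k \defeq |\mathcal{Z}_1 \cup \mathcal{Z}_2|$, inclusion–exclusion gives $i_1 + i_2 = k + |\mathcal{Z}_1 \cap \mathcal{Z}_2| \geq k+1$, while $i_1, i_2 \leq k$ since both are contained in $\mathcal{Z}_1 \cup \mathcal{Z}_2$. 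The hypothesis thus applies and delivers $P(k) \leq P(i_1) + P(i_2)$, which is the desired inequality.

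None of these steps looks delicate; the only place that requires care is bookkeeping the off-by-one shifts. For multiple counting the shift arises because the splitting node contributes $+1$ to both $\sigma(W_1)$ and $\sigma(W_2)$ (through the constant $1$ in the definition), so they overcount by one. For single counting the analogous shift arises because the splitting node forces at least one zone to be double-counted. Matching these shifts to the forms $k-i'+1$ and $i_1+i_2 \geq k+1$ in the hypotheses is the one thing to verify carefully, but it is exactly what the statement has been tailored for.
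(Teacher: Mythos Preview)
Your proof is correct. The paper itself does not prove this theorem; it is quoted from \cite{Urban2020,Schoebel2022} without argument, so there is no in-paper proof to compare against. Your reductions of the three claims to the corresponding combinatorial facts about $\sigma$---monotonicity under extension, the additive decomposition $\sigma(W)=\sigma(W_1)+\sigma(W_2)-1$ for multiple counting, and the inclusion--exclusion bound $i_1+i_2\geq k+1$ with $i_1,i_2\leq k$ for single counting---are exactly the right ones, and the bookkeeping is clean.
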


\begin{remark}
	\label{remark:zone-properties}
	It has been shown that an equivalence is obtained in \Cref{prop:zone-properties} if the PTN and the zone partition are not fixed \cite{Urban2020,Schoebel2022}.
	The sufficient conditions of \Cref{prop:zone-properties} are however not necessary for a specific PTN with a fixed zone tariff because not all numbers of traversed zones are attained by paths in the PTN: For example, a zone tariff with only one zone and a price function with $P(1)=1$, $P(2)=3$ and $P(3)=2$ satisfies the no-elongation property although the prices are not increasing.
	Nevertheless, these sufficient conditions are used in MILPs and algorithms to ensure the no-elongation property and/or the no-stopover property.
\end{remark}

Subproblems of the zone tariff design problem are the \emph{zone-partition subproblem} and the \emph{price-setting subproblem}.
In the zone-partition subproblem, we assume the price function to be given and only optimize the zone partition.
Conversely, the zone partition is given in the price-setting subproblem and the prices are optimized.

\subsection{General Properties and Relations Between the Different Problem Variants}

In this section, we observe some general properties of the zone tariff design problem and explore the relations between the different problem variants and the behavior of the objective function values.
We start by stating that the definitions of zone functions in the case of single and multiple counting coincide under some circumstances.

\begin{lemma}
	\label{prop:multiple-equal-single}
	Let a PTN with a zone partition $\cZ$ be given.
	Both definitions of a zone function $\sigma$ for multiple and single counting coincide for paths $W\in\cW$ that do not traverse a zone several times.
	In particular, this is the case when $W=(v_1,v_2)$ for some edge $\{v_1,v_2\}\in E$.
\end{lemma}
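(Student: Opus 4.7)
The plan is to exploit the structural consequence of the no-repetition hypothesis: when a path $W=(v_1,\ldots,v_n)$ does not traverse any zone several times, the sequence of zones visited by its nodes has a simple normal form. Concretely, labeling each node $v_i$ by the unique zone $Z\in\cZ$ containing it, the label sequence consists of $m$ maximal consecutive runs of equal labels, where the zones of distinct runs are pairwise distinct (otherwise some zone would be entered, left, and re-entered, contradicting the hypothesis).

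From this normal form, both zone functions can be read off directly. For single counting, the set $\{Z\in\cZ:V(W)\cap Z\neq\emptyset\}$ is exactly the set of the $m$ zones appearing as run-labels, so $\sigma(W)=m$. For multiple counting, an edge $e=\{v_i,v_{i+1}\}$ has $b(e)=0$ if $v_i,v_{i+1}$ lie in a common run and $b(e)=1$ if they bridge two consecutive runs; since there are exactly $m-1$ bridging edges and all remaining edges are internal to a run, $b(W)=m-1$ and hence $\sigma(W)=1+b(W)=m$. The two counts agree.

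For the ``in particular'' claim, I would observe that a path $W=(v_1,v_2)$ with a single edge trivially cannot traverse any zone more than once: it has at most two node-labels, and if they coincide the unique zone is visited only once, while if they differ each of the two zones is visited exactly once. Hence the hypothesis holds automatically, and the general argument applies (or, equivalently, one verifies both definitions by direct case distinction: both yield $1$ when $v_1,v_2$ lie in the same zone, and both yield $2$ otherwise).

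There is no real obstacle here; the only step that requires a moment of care is formalizing the block decomposition of the zone-labeling along $W$, but this is an immediate consequence of the no-repetition assumption, so the proof should be short.
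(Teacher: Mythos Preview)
Your proposal is correct and follows essentially the same approach as the paper's proof: both argue that under the no-repetition hypothesis every border crossing (edge with $b(e)=1$) enters a genuinely new zone, so $1+b(W)$ equals the number of distinct zones on $W$. Your explicit block/run decomposition of the zone-label sequence just spells out in more detail what the paper states in a single sentence.
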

\begin{proof}
	Let a path $W \in \cW$ that does not traverse a zone several times be given.
	In this case, every time that $b(e)=1$ for $e\in E(W)$, a new zone is entered that has not been traversed before.
	Hence, $1 + \sum_{e\in E(W)} b(e)$ is the number of different zones on the path $W$, which is equal to  $\left\vert \{Z\in \cZ: V(W) \cap Z \neq \emptyset\} \right\vert$.
\end{proof}

Next, we prove that an upper bound equal to the maximum reference price $\bar{r}\defeq \max\{r_d: d\in D\}$ is valid for the price function of an optimal solution.
	
\begin{lemma} \label{prop:bounded-prices}	
	For all optimal solution $\cZ, P$ to \ZDXY{} with $\textup{X} \in \{\textup{M, S}\}$ and  $\textup{Y} \in \{\textup{A,C}\}$ with/without requiring the no-elongation property and with/without requiring the no-stopover property, it holds that $P(k) \leq \bar{r} \defeq \max\{r_d: d\in D\}$ for all $k\in \NN_{\geq 1}$ for which there is an OD pair $d\in D$ with $\sigma(W_d)=k$.
	
	In particular, there is always an optimal solution $\cZ, P$ with $P(k)\leq \bar{r}$ for all $k\in \NN_{\geq 1}$.
\end{lemma}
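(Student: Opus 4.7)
The plan is to take an arbitrary optimal solution $(\mathcal{Z}, P)$ and truncate prices from above at $\bar{r}$, i.e.\ to define the candidate price function $P'(k) \defeq \min\{P(k), \bar{r}\}$ for all $k \in \mathbb{N}_{\geq 1}$. I will show (i) that $(\mathcal{Z}, P')$ is still feasible for the same variant and with whichever of the no-elongation and no-stopover properties were required, (ii) that its objective value does not exceed that of $(\mathcal{Z}, P)$, and (iii) that the inequality is strict as soon as $P(k) > \bar{r}$ for some $k$ that is attained by $\sigma(W_d)$ for at least one $d \in D$. Claim (iii) combined with the optimality of $P$ will immediately yield the first part of the lemma, and $(\mathcal{Z}, P')$ itself will witness the ``in particular'' statement.

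First I would verify feasibility. Non-negativity of $P'$ is immediate from $\bar{r} \geq 0$. For the no-elongation property, by \Cref{prop:zone-properties} it suffices that $P'$ is monotonically increasing, which is inherited from $P$ because capping by a constant preserves monotonicity. For the no-stopover property with multiple counting I would check $P'(k) \leq P'(i) + P'(k-i+1)$ by cases: if $P'(i) = P(i)$ and $P'(k-i+1) = P(k-i+1)$, the inequality reduces to the corresponding inequality for $P$ together with $P'(k) \leq P(k)$; otherwise at least one summand on the right equals $\bar{r}$, so the right-hand side is at least $\bar{r}$, while the left-hand side is at most $\bar{r}$ by construction. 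The argument for single counting is analogous, using $i_1, i_2$ with $i_1 + i_2 \geq k+1$ in place of $i$ and $k-i+1$.

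For parts (ii) and (iii), observe that $r_d \leq \bar{r}$ for every $d \in D$, so
\[ \bigl| r_d - P'(\sigma(W_d)) \bigr| \leq \bigl| r_d - P(\sigma(W_d)) \bigr|, \]
with strict inequality precisely when $P(\sigma(W_d)) > \bar{r}$. Summing with the positive weights $t_d$ shows that the objective of $(\mathcal{Z}, P')$ is at most that of $(\mathcal{Z}, P)$. Moreover, if $P(k) > \bar{r}$ for some $k$ of the form $\sigma(W_d)$, at least one term strictly decreases and the whole objective strictly decreases, contradicting the optimality of $(\mathcal{Z}, P)$. Hence $P(k) \leq \bar{r}$ for every such $k$, which proves the first statement, and since on all other $k$ the value of $P'$ only enters feasibility (never the objective), the solution $(\mathcal{Z}, P')$ is itself optimal and satisfies $P'(k) \leq \bar{r}$ for all $k \in \mathbb{N}_{\geq 1}$.

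The only place where care is needed is the case analysis for the no-stopover property, where one must check that truncating individual values of $P$ does not destroy the subadditive-type inequality of \Cref{prop:zone-properties}. The key observation that makes this routine is that $\bar{r}$ itself is a valid upper bound for $P'$ on every argument, so whenever truncation is active on the right-hand side the inequality is trivially satisfied, and whenever it is not active on the right-hand side we are back to the original inequality for $P$.
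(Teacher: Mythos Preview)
Your argument is correct and is essentially the paper's own proof: the paper also sets $P'(k)=\min\{P(k),\bar r\}$, checks by the same two-case analysis that the no-elongation and no-stopover constraints are preserved, and observes that any OD pair with $P(\sigma(W_d))>\bar r$ forces a strict decrease of the objective. One small point of care: you verify feasibility via the \emph{sufficient} conditions of \Cref{prop:zone-properties}, whereas the paper checks \Cref{definition:properties} directly; as \Cref{remark:zone-properties} notes, an optimal $P$ satisfying the definition need not be globally monotone, so strictly speaking you cannot assume $P$ is increasing. This does not affect your proof, since ``capping by a constant'' preserves every individual inequality $P(a)\le P(b)$ (not just global monotonicity), so your case analysis goes through unchanged when phrased against the definition.
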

\begin{proof}
	Let $\cZ, P$ be an optimal solution to \ZDXY{} with $\textup{X} \in \{\textup{M, S}\}$ and  ${\textup{Y} \in \{\textup{A,C}\}}$.
	Assume that there is some $k\in \NN_{\geq 1}$ with $P(k)>\bar{r}$ and $D_k\neq \emptyset$.
	We define a new price function $P'\colon \NN_{\geq 1} \to \RR_{\geq 0}$ by 
	\[P'(k)\defeq \begin{cases}
		P(k) &\text{if } P(k)\leq \bar{r},\\
		\bar{r} & \text{if } P(k)>\bar{r}
	\end{cases}\]
	for all $k\in \NN_{\geq 1}$.
	In order to prove the claim, we show that the zone tariff $\pi'$ w.r.t.\ $\cZ, P'$ has the same properties and yields a better objective function value than the zone tariff~$\pi$ w.r.t.\ $\cZ, P$, which leads to a contradiction.
	\begin{itemize}
		\item First, we consider an OD pairs $d\in D$ with $P(\sigma(W_d))>\bar{r}$.
		By assumption such an OD pair exists.
		Because ${P(\sigma(W_d))>\bar{r}\geq r_d}$ and $P'(\sigma(W_d))=\bar{r}$, we have that ${\vert P(\sigma(W_d)) - r_d \vert > \vert P'(\sigma(W_d)) - r_d \vert}$.
		Second, for all OD pairs $d\in D$ with ${P(\sigma(W_d))\leq \bar{r}}$, nothing is changed.
		Hence, replacing $P$ with $P'$ decreases the objective function value.
		\item In case the no-elongation property is required, this means by \Cref{definition:properties} that for all paths $W_1=(v_1,\ldots,v_n), W_2=(v_1,\ldots,v_{n-1})\in \cW$, $n\geq 2$, with $k_1\defeq \sigma(W_1)$ and $k_2\defeq \sigma(W_2)$, it holds that $P(k_2)\leq P(k_1)$.
		If $P(k_1)>\bar{r}$, then ${P'(k_2)\leq \bar{r} =P'(k_2)}$.
		If $P(k_1)\leq \bar{r}$, $P'(k_2)= P(k_2)\leq P(k_1)=P'(k_1)$.
		Hence, $\pi'$ satisfies the no-elongation property.
		\item In case the no-stopover property is required, this means by \Cref{definition:properties} that for all paths $W=(v_1,\ldots,v_n)\in \cW$ and subpaths $W_1=(v_1,\ldots,v_{i})$, $W_2=(v_i,\ldots,v_n)\in \cW$ with $n\geq 3$, $i\in\{2,\ldots,n-1\}$ and with $k\defeq \sigma(W)$, $k_1\defeq \sigma(W_1)$ and $k_2\defeq\sigma(W_2)$, it holds that ${P(k)\leq P(k_1)+P(k_2)}$.
		If $P(k_1)>\bar{r}$ or $P(k_2)>\bar{r}$, then $P'(k)\leq \bar{r}\leq P'(k_1) + P'(k_2)$.
		If $P(k_1)\leq \bar{r}$ and $P(k_2)\leq \bar{r}$, then $P'(k)\leq P(k) \leq P(k_1)+P(k_2)=P'(k_1)+P'(k_2)$.
		Hence, $\pi'$ satisfies the no-stopover property. \qedhere
	\end{itemize}
\end{proof}

The optimal objective function value of each problem \ZDXY{} with $\textup{X}\in \{\textup{M,S}\}$ and $\textup{Y}\in \{\textup{A,C}\}$ with/without requiring the no-elongation property and with/without requiring the no-stopover property is monotonically decreasing with increasing $N$ because increasing the upper bound on the number of zones extends the feasible domain.

Note that this does not hold if the zone partition needs to consist of \emph{exactly} $N$ non-empty sets as the following example shows:
Consider the linear graph depicted in \Cref{fig:ex-exact-n-zones-0}, where the OD pairs with their reference prices and paths are marked in orange.
Every OD pair has one passenger, i.e., $t_d=1$.
With two zones $\cZ=\{\{1,2\},\{3\}\}$ (\Cref{fig:ex-exact-n-zones-1}) the objective function value is 0, but for three zones the only choice of zones is $\cZ=\{\{1\},\{2\},\{3\}\}$ (\Cref{fig:ex-exact-n-zones-2}) with an objective function value of 1.
However, here, $N$ is an upper bound on the number of zones, which need not be met with equality.
\begin{figure}[tbp]
	\centering
	\begin{subfigure}[t]{0.32\textwidth}
		\centering
		\begin{tikzpicture}[thick, main/.style = {draw, circle,inner sep=1pt,minimum size=0.7cm},minor/.style = {circle,inner sep=0pt,minimum size=0cm, font=\scriptsize, fill=white}]
			\node[main] (v1) at (0,0) {$v_1$};
			\node[main] (v2) at (1.5,0) {$v_2$};
			\node[main] (v3) at (3,0) {$v_3$};
			
			\draw (v1)--(v2);
			\draw (v2)--(v3);
			
			\draw[orange, bend angle=20, bend left] (v1) to node[midway,minor]{1} (v2) ;
			\draw[orange, bend angle=30, bend left] (v1) to node[midway,minor]{2} (v3) ;
			\draw[orange, bend angle=20, bend left] (v2) to node[midway,minor]{2} (v3) ;
		\end{tikzpicture}
		\caption{PTN with OD data (orange)}
		\label{fig:ex-exact-n-zones-0}
	\end{subfigure}
	\begin{subfigure}[t]{0.32\textwidth}
		\centering
		\begin{tikzpicture}[thick, main/.style = {draw, circle,inner sep=1pt,minimum size=0.7cm}]
			\node[main,fill=red!30] (v1) at (0,0) {$v_1$};
			\node[main,fill=red!30] (v2) at (1.5,0) {$v_2$};
			\node[main,fill=blue!30] (v3) at (3,0) {$v_3$};
			
			\draw (v1)--(v2);
			\draw (v2)--(v3);
		\end{tikzpicture}
		\caption{Two zones}
		\label{fig:ex-exact-n-zones-1}
	\end{subfigure}
	\begin{subfigure}[t]{0.32\textwidth}
		\centering
		\begin{tikzpicture}[thick, main/.style = {draw, circle,inner sep=1pt,minimum size=0.7cm}]
			\node[main,fill=red!30] (v1) at (0,0) {$v_1$};
			\node[main,fill=blue!30] (v2) at (1.5,0) {$v_2$};
			\node[main,fill=yellow!30] (v3) at (3,0) {$v_3$};
			
			\draw (v1)--(v2);
			\draw (v2)--(v3);
		\end{tikzpicture}
		\caption{Three zones}
		\label{fig:ex-exact-n-zones-2}
	\end{subfigure}
	\caption{Instance showing that it can be better to implement fewer zones.}
	\label{fig:ex-exact-n-zones}
\end{figure}
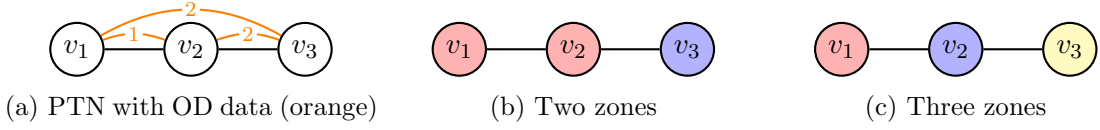

For the rest of this section, we compare the optimal objective function values of the four variants of the zone tariff design problem (see \Cref{tab:four-variants}).
Given an instance of the zone tariff design problem, let $z^{\ast}_{\textup{MA}}$, $z^{\ast}_{\textup{MC}}$, $z^{\ast}_{\textup{SA}}$ and $z^{\ast}_{\textup{SC}}$ be the respective optimal objective function values of the four variants.
The results of \Cref{prop:relations}, \Cref{prop:multiple-counting-feasible-solutions} and \Cref{ex:zdma-better-than-zdmc,ex:zdsa-better-than-zdsc,ex:zdma-better-than-zdsa-tree,ex:zdsa-better-than-zdma-tree-n,ex:zdmy-better-than-zdsy-n,ex:zdsc-better-than-zdmc-n} are summarized in \Cref{fig:relations-overview}.
All these results hold with/without requiring the no-elongation property and with/without requiring the no-stopover property (which is  not explicitly mentioned in each result for the sake of shortness).
Note that we can choose the values of the price function from the given reference prices as we will later see in \Cref{prop:fixed-zones-pricing-unconstrained}.

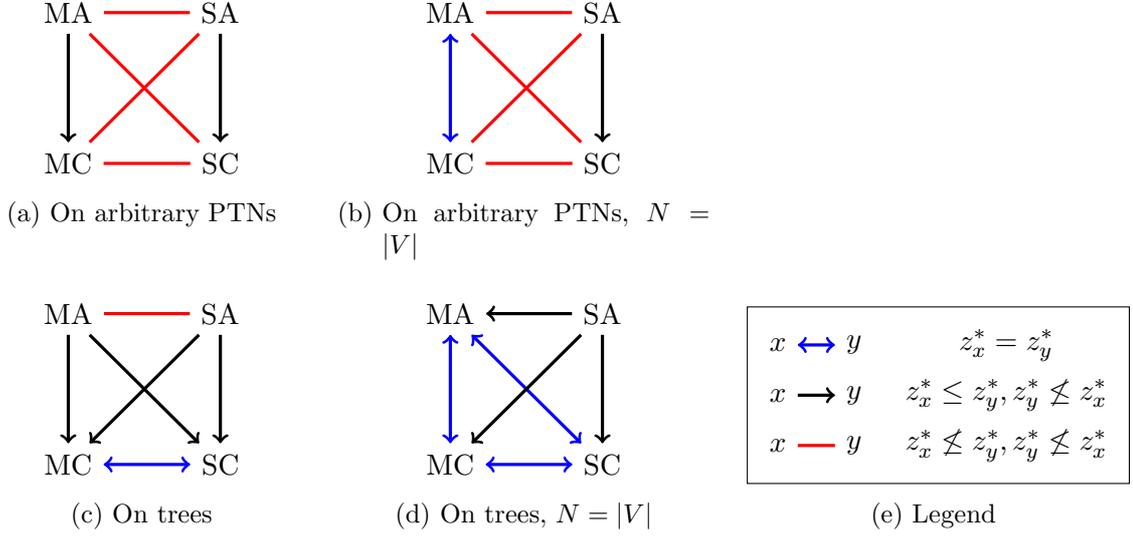
\begin{figure}[tbp]
	\begin{subfigure}[t]{0.32\textwidth}
		\centering
		\begin{tikzpicture}[very thick]
			\node[] (ma) at (0,0) {MA};
			\node[] (sa) at (2,0) {SA};
			\node[] (mc) at (0,-2) {MC};
			\node[] (sc) at (2,-2) {SC};
			
			\draw[->] (ma)--(mc);
			\draw[->] (sa)--(sc);
			\draw[red] (ma)--(sa);
			\draw[red] (ma)--(sc);
			\draw[red] (mc)--(sa);
			\draw[red] (mc)--(sc);
		\end{tikzpicture}
		\caption{On arbitrary PTNs}
	\end{subfigure}
	\begin{subfigure}[t]{0.32\textwidth}
		\centering
		\begin{tikzpicture}[very thick]
			\node[] (ma) at (0,0) {MA};
			\node[] (sa) at (2,0) {SA};
			\node[] (mc) at (0,-2) {MC};
			\node[] (sc) at (2,-2) {SC};
			
			\draw[<->,blue] (ma)--(mc);
			\draw[->] (sa)--(sc);
			\draw[red] (ma)--(sa);
			\draw[red] (ma)--(sc);
			\draw[red] (mc)--(sa);
			\draw[red] (mc)--(sc);
		\end{tikzpicture}
		\caption{On arbitrary PTNs, $N=\vert V \vert$}
	\end{subfigure}\\[1em]
	\begin{subfigure}[t]{0.32\textwidth}
		\centering
		\begin{tikzpicture}[very thick]
			\node[] (ma) at (0,0) {MA};
			\node[] (sa) at (2,0) {SA};
			\node[] (mc) at (0,-2) {MC};
			\node[] (sc) at (2,-2) {SC};
			
			\draw[->] (ma)--(mc);
			\draw[->] (sa)--(sc);
			\draw[red] (ma)--(sa);
			\draw[->] (ma)--(sc);
			\draw[->] (sa)--(mc);
			\draw[<->,blue] (mc)--(sc);
		\end{tikzpicture}
		\caption{On trees}
	\end{subfigure}
	\begin{subfigure}[t]{0.32\textwidth}
		\centering
		\begin{tikzpicture}[very thick]
			\node[] (ma) at (0,0) {MA};
			\node[] (sa) at (2,0) {SA};
			\node[] (mc) at (0,-2) {MC};
			\node[] (sc) at (2,-2) {SC};
			
			\draw[<->,blue] (ma)--(mc);
			\draw[->] (sa)--(sc);
			\draw[<-] (ma)--(sa);
			\draw[<->,blue] (ma)--(sc);
			\draw[->] (sa)--(mc);
			\draw[<->,blue] (mc)--(sc);
		\end{tikzpicture}
		\caption{On trees, $N=\vert V \vert$}
	\end{subfigure}
	\hfill
	\begin{subfigure}[t]{0.32\textwidth}
		\centering
		\begin{tikzpicture}[framed,very thick]
			\node[] (x1) at (0,0) {$x$};
			\node[] (y1) at (1,0) {$y$};
			\node[] (z1) at (3,0) {$z^{\ast}_x = z^{\ast}_y$};
			
			\node[] (x2) at (0,-2/3) {$x$};
			\node[] (y2) at (1,-2/3) {$y$};
			\node[] (z2) at (3,-2/3) {$z^{\ast}_x \leq z^{\ast}_y, z^{\ast}_y \not\leq z^{\ast}_x$};
			
			\node[] (x3) at (0,-4/3) {$x$};
			\node[] (y3) at (1,-4/3) {$y$};
			\node[] (z3) at (3,-4/3) {$z^{\ast}_x \not\leq z^{\ast}_y, z^{\ast}_y \not\leq z^{\ast}_x$};
			
			\draw[<->,blue] (x1)--(y1);
			\draw[->] (x2)--(y2);
			\draw[red] (x3)--(y3);
		\end{tikzpicture}
		\caption{Legend}
	\end{subfigure}
	\caption{Relationships between the optimal objective function values of the four problem variants of the zone tariff design problem in different cases (arbitrary PTN or tree, arbitrary $N\in \NN_{\geq 1}$ or $N=\vert V \vert$). Note that the results on trees are for the case that the paths~$W_d$ are the unique simple paths for all OD pairs~$d$ (see \Cref{prop:relations}).}
	\label{fig:relations-overview}
\end{figure}

\Cref{prop:relations} provides information about the relationship between the objective function values of the different problem variants in general as well as on trees.

\begin{lemma}
	\label{prop:relations}
	Let an instance of the zone tariff design problem be given.
	Then we have:
	\begin{enumerate}
		\item $z^{\ast}_{\textup{MA}} \leq z^{\ast}_{\textup{MC}}$ and $z^{\ast}_{\textup{SA}}\leq z^{\ast}_{\textup{SC}}$. Both may hold strictly.
		\item $z^{\ast}_{\textup{MC}}=z^{\ast}_{\textup{SC}}$ if the graph is a tree and the paths $W_d$ for $d\in D$ are the unique simple paths.
	\end{enumerate}
\end{lemma}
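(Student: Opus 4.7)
The plan is to prove the two parts separately. For part~1, I would observe that a connected zone partition is by definition a special case of an arbitrary zone partition, so every feasible solution to \ZDMC{} (resp.\ \ZDSC{}) is a feasible solution to \ZDMA{} (resp.\ \ZDSA{}) with the same objective value: the zone function~$\sigma$ only depends on which zone each node belongs to, not on the connectedness of the zones, and the additional properties (no-elongation, no-stopover) only depend on the price function $P$ via the sufficient conditions of \Cref{prop:zone-properties}, which are also preserved. Taking the minimum over the smaller feasible set cannot yield a better value, which gives $z^{\ast}_{\textup{MA}} \leq z^{\ast}_{\textup{MC}}$ and $z^{\ast}_{\textup{SA}} \leq z^{\ast}_{\textup{SC}}$. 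For the strictness claim, I would point to small explicit counterexamples (the paper will later provide \Cref{ex:zdma-better-than-zdmc,ex:zdsa-better-than-zdsc}), so no further work is needed here beyond citing them.

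For part~2, the key structural fact is that in a tree a connected subset of nodes induces a subtree, and a simple path in a tree can enter and leave any subtree at most once; otherwise two distinct simple paths in the subtree between the entry/exit vertices would exist and together form a cycle, contradicting that the graph is a tree. Hence for every OD pair $d\in D$, the unique simple path $W_d$ traverses each zone $Z\in\cZ$ at most once. By \Cref{prop:multiple-equal-single}, the zone functions for multiple and single counting coincide on all such paths $W_d$, so for any fixed connected zone partition $\cZ$ and price list $P$ the objective values of \ZDMC{} and \ZDSC{} are identical. The feasible sets of \ZDMC{} and \ZDSC{} coincide as well (same constraints on $\cZ$ and $P$), so the optima agree: $z^{\ast}_{\textup{MC}}=z^{\ast}_{\textup{SC}}$.

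I expect the only mildly delicate point to be verifying that the feasibility argument in part~1 really does transfer also when the no-elongation or no-stopover property is required; this is immediate because those requirements (via the sufficient conditions of \Cref{prop:zone-properties}) are imposed on the price function alone and are oblivious to whether the underlying zones are connected. The rest of the proof is essentially bookkeeping plus the ``subtree-entered-once'' observation for trees.
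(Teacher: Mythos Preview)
Your proposal is correct and follows essentially the same approach as the paper: part~1 is the feasible-set-inclusion argument (with strictness deferred to \Cref{ex:zdma-better-than-zdmc,ex:zdsa-better-than-zdsc}), and part~2 is an application of \Cref{prop:multiple-equal-single}. You actually supply more than the paper does, since the paper's proof of part~2 says only ``Follows from \Cref{prop:multiple-equal-single}'' without spelling out why its hypothesis (that no zone is traversed more than once) holds---your ``subtree-entered-once'' argument fills exactly that gap.
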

\begin{proof}~
	\begin{enumerate}
		\item Requiring connectedness of the zones is a restriction on the solution space. 
		\Cref{ex:zdma-better-than-zdmc,ex:zdsa-better-than-zdsc} show that the inequalities can hold strictly.
		\item Follows from \Cref{prop:multiple-equal-single}. \qedhere
	\end{enumerate}
\end{proof}

If $N=\vert V \vert$, we also obtain equality for the multiple counting cases, i.e., $z^{\ast}_{\textup{MA}} = z^{\ast}_{\textup{MC}}$, as \Cref{prop:multiple-counting-feasible-solutions} shows.
Note that it still may happen that less than $N$ zones are used, as in the example in \Cref{fig:ex-exact-n-zones}.

\begin{theorem}
	\label{prop:multiple-counting-feasible-solutions}
	Let an instance of the zone tariff design problem with upper bound $N\in \NN_{\geq 1}$ on the number of zones be given. 
	For every solution $\cZ, P$ to \ZDMA{} with upper bound $N$ on the number of zones, there is an $N'\in \NN$ with $N'\leq \vert V \vert$ and a zone partition $\cZ'$ such that $\cZ', P$ is feasible to \ZDMC{} with upper bound $N'$ on the number of zones with the same objective function value.	
	In particular, if $N=\vert V \vert$, we have $z^{\ast}_{\textup{MA}}=z^{\ast}_{\textup{MC}}$.
\end{theorem}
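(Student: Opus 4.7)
The plan is to observe that multiple counting is a purely \emph{local} notion: the zone function $\sigma(W)=1+\sum_{e\in E(W)}b(e)$ depends only on the zone border weights $b(e)$, and $b(\{v_1,v_2\})$ only asks whether $v_1$ and $v_2$ lie in the same zone. Hence, if we refine an arbitrary zone partition by splitting each zone into its connected components in the induced subgraph, every edge retains the same zone border weight and therefore $\sigma$ is preserved for every path.

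Concretely, given a feasible solution $\cZ,P$ to \ZDMA{} with at most $N$ zones, I would define
\[\cZ' \defeq \bigcup_{Z\in \cZ}\{C: C \text{ is a connected component of } G[Z]\}.\]
This is again a partition of $V$, each of its parts is connected by construction, and $|\cZ'|\eqdef N'\leq |V|$ since any partition of $V$ has at most $|V|$ blocks. For any edge $e=\{v_1,v_2\}\in E$: if $v_1,v_2$ belong to the same $Z\in\cZ$, then the edge $e$ places them in the same connected component of $G[Z]$, so they belong to the same part of $\cZ'$; conversely, if they belong to different zones of $\cZ$, they also belong to different parts of $\cZ'$. Thus $b(e)$ is identical under $\cZ$ and $\cZ'$, and therefore $\sigma(W_d)$ is the same for every $d\in D$. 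Consequently the price $P(\sigma(W_d))$ is unchanged and the objective value $\sum_{d\in D} t_d|r_d-P(\sigma(W_d))|$ is preserved. Since $\cZ'$ consists of connected sets and contains at most $|V|$ blocks, $\cZ',P$ is feasible for \ZDMC{} with upper bound $N'\leq|V|$, which proves the first part.

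For the ``In particular'' statement, assume $N=|V|$. Take any optimal solution $\cZ,P$ of \ZDMA{}. The construction above produces a feasible \ZDMC{} solution with upper bound $N'\leq |V|=N$ on the number of zones and identical objective value, so $z^{\ast}_{\textup{MC}}\leq z^{\ast}_{\textup{MA}}$. Combined with the reverse inequality $z^{\ast}_{\textup{MA}}\leq z^{\ast}_{\textup{MC}}$ from Lemma~\ref{prop:relations}, we obtain $z^{\ast}_{\textup{MA}}=z^{\ast}_{\textup{MC}}$.

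I do not expect a serious obstacle: the only subtle point is ensuring that the number of parts in the refinement is bounded by $|V|$, which follows from the fact that $\cZ'$ is a partition of $V$, and noting that one must verify the argument works whether or not the no-elongation or no-stopover property is required—but since $P$ is unchanged and the sufficient conditions of Theorem~\ref{prop:zone-properties} depend only on $P$, these properties are automatically preserved.
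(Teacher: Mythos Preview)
Your proposal is correct and follows essentially the same approach as the paper: both refine $\cZ$ by splitting each zone into its connected components in the induced subgraph and argue that the zone border weights $b(e)$ are unchanged, so $\sigma$ and the objective value are preserved. Your edge-level justification (an edge with both endpoints in the same zone keeps them in the same connected component) is in fact more explicit than the paper's one-line remark that ``no connected parts are split,'' and your additional observation about the no-elongation and no-stopover properties carrying over via Theorem~\ref{prop:zone-properties} is a useful supplement not spelled out in the paper's proof.
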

\begin{proof}
	Let $\cZ, P$ be a solution to \ZDMA{} regarding $N$ with $\cZ=\{Z_1,\ldots,Z_L\}$, $L\leq N$.
	We enumerate all connected components of all zones:
	For all $i\in \{1,\ldots,L\}$, let $l_i,k_i\in \NN_{\geq 1}$ with $l_i \leq k_i$, $l_1=1$ and $l_{i+1} = k_i+1$ for all $i\in \{1,\ldots,L-1\}$ such that $Z'_{l_i},\ldots, Z'_{k_i}$ denote all connected components of $G[Z_i]$ for all $i\in \{1,\ldots,L\}$.
	We set $N' \defeq k_L$ and $\cZ' \defeq \{Z'_{l_i},\ldots,Z'_{k_i}: i\in \{1,\ldots,L\}\}$.
	Then $L\leq N'\leq \vert V \vert$ and all $Z\in \cZ'$ are connected.
	For each OD pair, the number of zone borders that are crossed on its path are the same for $\cZ$ and~$\cZ'$ because no connected parts are split in the new zone partition.
	Therefore, also the objective function value remains unchanged.
\end{proof}

The following series of \Cref{ex:zdma-better-than-zdmc,ex:zdsa-better-than-zdsc,ex:zdma-better-than-zdsa-tree,ex:zdsa-better-than-zdma-tree-n,ex:zdmy-better-than-zdsy-n,ex:zdsc-better-than-zdmc-n} shows that there are no further inequalities that hold in general, on trees or with $N=\vert V\vert$.

\begin{example}[Example for $z^{\ast}_{\textup{MA}} < z^{\ast}_{\textup{MC}}$ on a tree]
	\label{ex:zdma-better-than-zdmc}
	\begin{figure}[p]
		\centering
		\begin{subfigure}[t]{0.49\textwidth}
			\centering
			\begin{tikzpicture}[thick, main/.style = {draw, circle,inner sep=1pt,minimum size=0.7cm},minor/.style = {circle,inner sep=0pt,minimum size=0cm, font=\scriptsize, fill=white}]
				\node[main] (v1) at (0,0) {$v_1$};
				\node[main] (v2) at (1.5,0) {$v_2$};
				\node[main] (v3) at (3,0) {$v_3$};
				
				\draw (v1)--(v2);
				\draw (v2)--(v3);
				
				\draw[orange, bend angle=20, bend left] (v1) to node[midway,minor]{1} (v2) ;
				\draw[orange, bend angle=30, bend left] (v1) to node[midway,minor]{2} (v3) ;
				\draw[orange, bend angle=20, bend left] (v2) to node[midway,minor]{1} (v3) ;
			\end{tikzpicture}
			\caption{PTN with OD data (orange)}
			\label{fig:ex-zdma-better-than-zdmc-0}
		\end{subfigure}
		\begin{subfigure}[t]{0.49\textwidth}
			\centering
			\begin{tikzpicture}[thick, main/.style = {draw, circle,inner sep=1pt,minimum size=0.7cm}]
				\node[main,fill=red!30] (v1) at (0,0) {$v_1$};
				\node[main,fill=blue!30] (v2) at (1.5,0) {$v_2$};
				\node[main,fill=red!30] (v3) at (3,0) {$v_3$};
				
				\draw (v1)--(v2);
				\draw (v2)--(v3);
			\end{tikzpicture}
			\caption{Optimal zones for \ZDMA{}}
			\label{fig:ex-zdma-better-than-zdmc-1}
		\end{subfigure}
		\caption{Instance for \Cref{ex:zdma-better-than-zdmc}.}
		\label{fig:ex-zdma-better-than-zdmc}
	\end{figure}
	Consider the PTN depicted in \Cref{fig:ex-zdma-better-than-zdmc-0}, which is a tree.
	The OD pairs with their reference prices and paths are marked in orange.
	Every OD pair has one passenger, i.e., $t_d=1$.
	Let $N\defeq 2$.

	Then $\cZ = \{\{v_1,v_3\}, \{v_2\}\}$ (\Cref{fig:ex-zdma-better-than-zdmc-1}) with $(p_1^{\ast}, p_2^{\ast}, p_3^{\ast})=(1,1,2)$ is an optimal solution to \ZDMA{} with objective function value $z^{\ast}_{\textup{MA}}= 0$.
	The only two structurally different connected zone partitions for \ZDMC{} are $\cZ = \{\{v_1,v_2,v_3\}\}$ and $\cZ = \{\{v_1,v_2\},\{v_3\}\}$.
	In both cases it is optimal to choose $(p_1^{\ast}, p_2^{\ast}, p_3^{\ast}) = (1,1,1)$, yielding an optimal objective function value of 1.
	Hence, $z^{\ast}_{\textup{MA}} < z^{\ast}_{\textup{MC}}$.
\end{example}

\begin{example}[Example for $z^{\ast}_{\textup{MA}} < z^{\ast}_{\textup{SA}}$ on a tree]
	\label{ex:zdma-better-than-zdsa-tree}
	\begin{figure}
		\centering
		\begin{subfigure}[t]{0.45\textwidth}
			\centering
			\begin{tikzpicture}[thick,%
				main/.style = {draw, circle,inner sep=1pt,minimum size=0.7cm},%
				minor/.style = {circle,inner sep=0pt,minimum size=0cm, font=\scriptsize, fill=white}]
				\node[main] (v1) at (0,0) {$v_1$};
				\node[main] (v2) at (1.5,0) {$v_2$};
				\node[main] (v3) at (3,0) {$v_3$};
				\node[main] (v4) at (4.5,0) {$v_4$};
				
				\draw (v1)--(v2);
				\draw (v2)--(v3);
				\draw (v3)--(v4);
				
				\draw[orange, bend angle=20, bend left] (v1) to node[midway,minor]{1} (v2) ;
				\draw[orange, bend angle=20, bend left] (v2) to node[midway,minor]{2} (v3) ;
				\draw[orange, bend angle=20, bend left] (v3) to node[midway,minor]{2} (v4) ;
				\draw[orange, bend angle=30, bend left] (v1) to node[midway,minor]{3} (v4) ;
			\end{tikzpicture}
			\caption{PTN with OD data (orange)}
			\label{fig:ex-zdma-better-than-zdsa-tree-0}
		\end{subfigure}
		\begin{subfigure}[t]{0.45\textwidth}
			\centering
			\begin{tikzpicture}[thick, main/.style = {draw, circle,inner sep=1pt,minimum size=0.7cm}]
				\node[main,fill=red!30] (v1) at (0,0) {$v_1$};
				\node[main,fill=red!30] (v2) at (1.5,0) {$v_2$};
				\node[main,fill=blue!30] (v3) at (3,0) {$v_3$};
				\node[main,fill=red!30] (v4) at (4.5,0) {$v_4$};
				
				\draw (v1)--(v2);
				\draw (v2)--(v3);
				\draw (v3)--(v4);
			\end{tikzpicture}
			\caption{Optimal zones for \ZDMA{}}
			\label{fig:ex-zdma-better-than-zdsa-tree-1}
		\end{subfigure}
		\caption{Instance for \Cref{ex:zdma-better-than-zdsa-tree}.}
		\label{fig:ex-zdma-better-than-zdsa-tree}
	\end{figure}
	Consider the PTN depicted in \Cref{fig:ex-zdma-better-than-zdsa-tree-0}, which is a tree.
	The OD pairs with their reference prices and paths are marked in orange.
	Every OD pair has one passenger, i.e., $t_d=1$.
	Let $N\defeq 2$.
	
	Then $\cZ=\{\{v_1,v_2,v_4\}, \{v_3\}\}$ (\Cref{fig:ex-zdma-better-than-zdsa-tree-1}) with $(p_1^{\ast}, p_2^{\ast}, p_3^{\ast})=(1,2,3)$ is an optimal solution to \ZDMA{} with objective function value $z^{\ast}_{\textup{MA}}=0$.
	Because it is not possible to observe three different prices with only two zones for the single counting case, we have $z^{\ast}_{\textup{MA}} < z^{\ast}_{\textup{SA}}$.
\end{example}	

\begin{example}[Example for $z^{\ast}_{\textup{SA}} < z^{\ast}_{\textup{SC}}$ on a tree with $N=\vert V \vert$]
	\label{ex:zdsa-better-than-zdsc}	
	Consider the PTN depicted in \Cref{fig:ex-zdsa-better-than-zdsc-1}, which is a tree.
	The OD pairs with their reference prices and paths are marked in orange.
	Every OD pair has one passenger, i.e., $t_d=1$.
	Let $N\defeq \vert V \vert = 5$.
	\begin{figure}[tbp]
		\centering
		\begin{subfigure}[t]{0.45\textwidth}
			\centering
			\begin{tikzpicture}[thick,%
				main/.style = {draw, circle,inner sep=1pt,minimum size=0.7cm},%
				minor/.style = {circle,inner sep=0pt,minimum size=0cm, font=\scriptsize, fill=white,text=orange}]
				\node[main] (v1) at (0,0) {$v_1$};
				\node[main] (v2) at (1.5,0) {$v_2$};
				\node[main] (v3) at (3,0) {$v_3$};
				\node[main] (v4) at (1.5,-1.5) {$v_4$};
				\node[main] (v5) at (3,-1.5) {$v_5$};
				
				\draw (v1)--(v2);
				\draw (v2)--(v3);
				\draw (v2)--(v4);
				\draw (v4)--(v5);
				
				\draw[orange, bend angle=20, bend left] (v1) to node[midway,minor]{2} (v2) ;
				\draw[orange, bend angle=20, bend left] (v2) to node[midway,minor]{1} (v3) ;
				\draw[orange, bend angle=20, bend right] (v2) to node[midway,minor]{2} (v4) ;
				\draw[orange, bend angle=20, bend right] (v4) to node[midway,minor]{2} (v5) ;
				\draw[orange] (v1.315) ..controls (1.1,-0.3) .. (v4.135) node[pos=0.3,minor]{2};
				\draw[orange] (v2.290) ..controls (1.7,-1.3).. (v5.160) 			node[pos=0.3,minor]{3};
			\end{tikzpicture}
			\caption{PTN with OD data (orange)}
			\label{fig:ex-zdsa-better-than-zdsc-1}
		\end{subfigure}
		\begin{subfigure}[t]{0.45\textwidth}
			\centering
			\begin{tikzpicture}[thick, main/.style = {draw, circle,inner sep=1pt,minimum size=0.7cm},%
				minor/.style = {circle,inner sep=0pt,minimum size=0cm, font=\scriptsize, fill=white}]
				\node[main,fill=red!30] (v1) at (0,0) {$v_1$};
				\node[main,fill=blue!30] (v2) at (1.5,0) {$v_2$};
				\node[main,fill=blue!30] (v3) at (3,0) {$v_3$};
				\node[main,fill=red!30] (v4) at (1.5,-1.5) {$v_4$};
				\node[main,fill=yellow!30] (v5) at (3,-1.5) {$v_5$};
				
				\draw (v1)--(v2);
				\draw (v2)--(v3);
				\draw (v2)--(v4);
				\draw (v4)--(v5);
			\end{tikzpicture}
			\caption{Optimal zones for \ZDSA{}}
			\label{fig:ex-zdsa-better-than-zdsc-2}
		\end{subfigure}
		\caption{Instance for \Cref{ex:zdsa-better-than-zdsc}}
		\label{fig:ex-zdsa-better-than-zdsc}
	\end{figure}
	
	Then $\cZ=\{\{v_1,v_4\},\{v_2,v_3\},\{v_5\}\}$ (\Cref{fig:ex-zdsa-better-than-zdsc-2}) with $(p_1^{\ast},p_2^{\ast},p_3^{\ast})=(1,2,3)$ is an optimal solution to \ZDSA{} with objective function value $z^{\ast}_{\textup{SA}}=0$.
	A case analysis shows that an objective function value of 0 cannot be achieved with connected zones, which yields $z^{\ast}_{\textup{SA}} < z^{\ast}_{\textup{SC}}$.
\end{example}

\begin{example}[Example for $z^{\ast}_{\textup{SA}} < z^{\ast}_{\textup{MA}}$ on a tree with $N=\vert V \vert$]
	\label{ex:zdsa-better-than-zdma-tree-n}
	Consider the PTN depicted in \Cref{fig:ex-zdsa-better-than-zdma-tree-n-0}, which is a tree.
	The OD pairs with their reference prices and paths are marked in orange.
	Every OD pair has one passenger, i.e., $t_d=1$.
	Let $N\defeq \vert V \vert =5$.
	
	Then $\cZ=\{\{v_1,v_2,v_4\},\{v_3\},\{v_5\}\}$ (\Cref{fig:ex-zdsa-better-than-zdma-tree-n-1}) with $(p_1^{\ast},p_2^{\ast},p_3^{\ast})=(1,2,3)$ is an optimal solution to \ZDSA{} with objective function value $z^{\ast}_{\textup{SA}}=0$.
	A case analysis shows that an objective function value of 0 cannot be achieved in the multiple counting case, which yields $z^{\ast}_{\textup{SA}} < z^{\ast}_{\textup{MA}}$.
	\begin{figure}[tbp]
		\centering
		\begin{subfigure}[c]{0.49\textwidth}
			\centering
			\begin{tikzpicture}[thick,%
				main/.style = {draw, circle,inner sep=1pt,minimum size=0.7cm},%
				minor/.style = {circle,inner sep=0pt,minimum size=0cm, font=\scriptsize, fill=white}]

				\node[main] (v1) at (0,0) {$v_1$};
				\node[main] (v2) at (1.5,0) {$v_2$};
				\node[main] (v3) at (3,0) {$v_3$};
				\node[main] (v4) at (4.5,0) {$v_4$};
				\node[main] (v5) at (6,0) {$v_5$};
				
				\draw (v1)--(v2);
				\draw (v2)--(v3);
				\draw (v3)--(v4);
				\draw (v4)--(v5);
				
				% 1: (1,2)
				\draw[orange, bend angle=20, bend left] (v1) to node[midway,minor]{1} (v2) ;
				% 2: (1,3) (2,3) (2,4)
				\draw[orange, bend angle=20, bend left] (v2) to node[midway,minor]{2} (v3) ;
				\draw[orange, bend angle=30, bend left] (v2) to node[midway,minor]{2} (v4) ;
				% 3: (3,5)
				\draw[orange, bend angle=30, bend right] (v3) to node[midway,minor]{3} (v5) ;
			\end{tikzpicture}
			\caption{PTN with OD data (orange)}
			\label{fig:ex-zdsa-better-than-zdma-tree-n-0}
		\end{subfigure}%
		\begin{subfigure}[c]{0.49\textwidth}
			\centering
			\begin{tikzpicture}[thick,%
				main/.style = {draw, circle,inner sep=1pt,minimum size=0.7cm},%
				minor/.style = {circle,inner sep=0pt,minimum size=0cm, font=\scriptsize, fill=white}]
				\node[main,fill=red!30] (v1) at (0,0) {$v_1$};
				\node[main,fill=red!30] (v2) at (1.5,0) {$v_2$};
				\node[main,fill=blue!30] (v3) at (3,0) {$v_3$};
				\node[main,fill=red!30] (v4) at (4.5,0) {$v_4$};
				\node[main,fill=yellow!30] (v5) at (6,0) {$v_5$};
				
				\draw (v1)--(v2);
				\draw (v2)--(v3);
				\draw (v3)--(v4);
				\draw (v4)--(v5);
				
				\draw[draw=none, bend angle=30, bend left] (v2) to node[midway,minor,text opacity=0]{2} (v4) ;
				\draw[draw=none, bend angle=30, bend right] (v3) to node[midway,minor,text opacity=0]{3} (v5) ;
			\end{tikzpicture}
			\caption{Optimal zones for \ZDSA{}}
			\label{fig:ex-zdsa-better-than-zdma-tree-n-1}
		\end{subfigure}\\[1em]
		\caption{Instance for \Cref{ex:zdsa-better-than-zdma-tree-n}.}
		\label{fig:ex-zdsa-better-than-zdma-tree-n}
	\end{figure}
\end{example}

\begin{example}[Example for $z^{\ast}_{\textup{MY\textsubscript{1}}} < z^{\ast}_{\textup{SY\textsubscript{2}}}$ with $N=\vert V \vert$ for all $\textup{Y\textsubscript{1}}, \textup{Y\textsubscript{2}} \in \{\textup{A},\textup{C}\}$]
	\label{ex:zdmy-better-than-zdsy-n}
	
	Consider the PTN depicted in \Cref{fig:ex-zdmy-better-than-zdsy-n-0}.
	The OD pairs with their reference prices and paths are marked in orange.
	Every OD pair has one passenger, i.e., $t_d=1$.
	Let $N\defeq \vert V \vert = 5$.
	\begin{figure}
		\centering
		\begin{subfigure}[t]{0.45\textwidth}
			\centering
			\begin{tikzpicture}[thick,%
				main/.style = {draw, circle,inner sep=1pt,minimum size=0.7cm},%
				minor/.style = {circle,inner sep=0pt,minimum size=0cm, font=\scriptsize, fill=white}]
				\node[main] (v1) at (0,0) {$v_1$};
				\node[main] (v2) at (1.5,0) {$v_2$};
				\node[main] (v3) at (3,0) {$v_3$};
				\node[main] (v4) at (1.5,-1.5) {$v_4$};
				\node[main] (v5) at (3,-1.5) {$v_5$};
				
				\draw (v1)--(v2);
				\draw (v2)--(v3);
				\draw (v2)--(v4);
				\draw (v3)--(v5);
				\draw (v4)--(v5);
				
				% 1: (2,3) (3,5)
				\draw[orange, bend angle=20, bend left] (v2) to node[midway,minor]{1} (v3);
				\draw[orange, bend angle=20, bend left] (v3) to node[midway,minor]{1} (v5);
				% 2: (1,2) (1,3) 
				\draw[orange, bend angle=20, bend left] (v1) to node[midway,minor]{2} (v2);
				\draw[orange, bend angle=30, bend left] (v1) to node[midway,minor]{2} (v3);
				% 3: (2,5)
				\draw[orange] (v2.290) ..controls (1.7,-1.3).. (v5.160) 			node[pos=0.3,minor]{3};
			\end{tikzpicture}
			\caption{PTN with OD data (orange)}
			\label{fig:ex-zdmy-better-than-zdsy-n-0}
		\end{subfigure}
		\begin{subfigure}[t]{0.45\textwidth}
			\centering
			\begin{tikzpicture}[thick, main/.style = {draw, circle,inner sep=1pt,minimum size=0.7cm},%
				minor/.style = {circle,inner sep=0pt,minimum size=0cm, font=\scriptsize, fill=white}]
				\node[main,fill=red!30] (v1) at (0,0) {$v_1$};
				\node[main,fill=blue!30] (v2) at (1.5,0) {$v_2$};
				\node[main,fill=blue!30] (v3) at (3,0) {$v_3$};
				\node[main,fill=yellow!30] (v4) at (1.5,-1.5) {$v_4$};
				\node[main,fill=blue!30] (v5) at (3,-1.5) {$v_5$};
				
				\draw (v1)--(v2);
				\draw (v2)--(v3);
				\draw (v2)--(v4);
				\draw (v3)--(v5);
				\draw (v4)--(v5);
			\end{tikzpicture}
			\caption{Optimal zones for \ZDMA{} and \ZDMC{}}
			\label{fig:ex-zdmy-better-than-zdsy-n-1}
		\end{subfigure}
		\caption{Instance for \Cref{ex:zdmy-better-than-zdsy-n}}
		\label{fig:ex-zdmy-better-than-zdsy-n}
	\end{figure}
	
	Then $\cZ=\{\{v_1\},\{v_2,v_3,v_5\},\{v_4\}\}$ (\Cref{fig:ex-zdmy-better-than-zdsy-n-1}) with $(p_1^{\ast},p_2^{\ast},p_3^{\ast})=(1,2,3)$ is an optimal solution to \ZDSA{} and \ZDSC{} with objective function value $ z^{\ast}_{\textup{SA}}=z^{\ast}_{\textup{SC}}=0$.
	A case analysis shows that an objective function value of 0 cannot be achieved in the single counting case, which yields $z^{\ast}_{\textup{MA}}=z^{\ast}_{\textup{MC}}< z^{\ast}_{\textup{SA}}$ and $z^{\ast}_{\textup{MA}}=z^{\ast}_{\textup{MC}}<z^{\ast}_{\textup{SC}}$.
\end{example}

\begin{example}[Example for $z^{\ast}_{\textup{SC}} < z^{\ast}_{\textup{MC}}$ with $N=\vert V \vert$]
	\label{ex:zdsc-better-than-zdmc-n}
	
	Consider the PTN depicted in \Cref{fig:ex-zdsc-better-than-zdmc-n-0}.
	The OD pairs with their reference prices and paths are marked in orange.
	Every OD pair has one passenger, i.e., $t_d=1$.
	Let $N\defeq \vert V \vert = 5$.
	\begin{figure}[tbp]
		\centering
		\begin{subfigure}[t]{0.45\textwidth}
			\centering
			\begin{tikzpicture}[thick,%
				main/.style = {draw, circle,inner sep=1pt,minimum size=0.7cm},%
				minor/.style = {circle,inner sep=0pt,minimum size=0cm, font=\scriptsize, fill=white}]
				\node[main] (v1) at (0,0) {$v_1$};
				\node[main] (v2) at (1.5,0) {$v_2$};
				\node[main] (v3) at (3,0) {$v_3$};
				\node[main] (v4) at (1.5,-1.5) {$v_4$};
				\node[main] (v5) at (3,-1.5) {$v_5$};
				
				\draw (v1)--(v2);
				\draw (v2)--(v3);
				\draw (v2)--(v4);
				\draw (v3)--(v5);
				\draw (v4)--(v5);
				
				% 1: (2,3) (3,5)
				\draw[orange, bend angle=20, bend left] (v2) to node[midway,minor]{1} (v3);
				\draw[orange, bend angle=20, bend left] (v3) to node[midway,minor]{1} (v5);
				% 2: (1,2) (1,3) (2,5) (3,4)
				\draw[orange, bend angle=20, bend left] (v1) to node[midway,minor]{2} (v2);
				\draw[orange, bend angle=30, bend left] (v1) to node[midway,minor]{2} (v3);
				\draw[orange] (v2.290) ..controls (1.7,-1.3).. (v5.160) 			node[pos=0.3,minor]{2};
				% 3: (1,4)
				\draw[orange] (v1.-20) ..controls (1.3,-0.2).. (v4.110) 			node[pos=0.3,minor]{3};
			\end{tikzpicture}
			\caption{PTN with OD data (orange)}
			\label{fig:ex-zdsc-better-than-zdmc-n-0}
		\end{subfigure}
		\begin{subfigure}[t]{0.45\textwidth}
			\centering
			\begin{tikzpicture}[thick, main/.style = {draw, circle,inner sep=1pt,minimum size=0.7cm},%
				minor/.style = {circle,inner sep=0pt,minimum size=0cm, font=\scriptsize, fill=white}]
				\node[main,fill=red!30] (v1) at (0,0) {$v_1$};
				\node[main,fill=blue!30] (v2) at (1.5,0) {$v_2$};
				\node[main,fill=blue!30] (v3) at (3,0) {$v_3$};
				\node[main,fill=yellow!30] (v4) at (1.5,-1.5) {$v_4$};
				\node[main,fill=blue!30] (v5) at (3,-1.5) {$v_5$};
				
				\draw (v1)--(v2);
				\draw (v2)--(v3);
				\draw (v2)--(v4);
				\draw (v3)--(v5);
				\draw (v4)--(v5);
			\end{tikzpicture}
			\caption{Optimal zones for \ZDSC{}}
			\label{fig:ex-zdsc-better-than-zdmc-n-1}
		\end{subfigure}
		\caption{Instance for \Cref{ex:zdsc-better-than-zdmc-n}}
		\label{fig:ex:zdsc-better-than-zdmc-n}
	\end{figure}
	
	Then $\cZ=\{\{v_1\},\{v_2,v_3,v_5\},\{v_4\}\}$ (\Cref{fig:ex-zdmy-better-than-zdsy-n-1}) with $(p_1^{\ast},p_2^{\ast},p_3^{\ast})=(1,2,3)$ is an optimal solution to \ZDSA{} and \ZDSC{} with objective function value $ z^{\ast}_{\textup{SA}}=z^{\ast}_{\textup{SC}}=0$.
	A case analysis shows that an objective function value of 0 cannot be achieved in the multiple counting case, which yields $z^{\ast}_{\textup{SA}}=z^{\ast}_{\textup{SC}}< z^{\ast}_{\textup{MA}}$ and $z^{\ast}_{\textup{SA}}=z^{\ast}_{\textup{SC}}<z^{\ast}_{\textup{MC}}$.
	
\end{example}

Because the price lists constructed in \Cref{ex:zdma-better-than-zdmc,ex:zdma-better-than-zdsa-tree,ex:zdmy-better-than-zdsy-n,ex:zdsa-better-than-zdma-tree-n,ex:zdsa-better-than-zdsc,ex:zdsc-better-than-zdmc-n} clearly satisfy the no-elongation property and the no-stopover property, all these results hold with and without requiring the no-elongation property and with and without requiring the no-stopover property.

\subsection{Complexity and Algorithms}

In \cite[Thm.~2]{Hamacher2004} it is shown that the zone tariff design problems \ZDMA{} and \ZDMC{} \emph{minimizing the maximum absolute deviation} from reference prices are NP-hard in case that $N\geq 3$ is fixed, the zone partition needs to consist of exactly $N$ (non-empty) sets and the passengers' paths minimize the number of traversed zones.
Further, for fixed passengers' paths, \cite[Thm.~1]{Otto2017} proves NP-hardness of the zone tariff design problems \ZDSA{} (even if $N=2$) and \ZDSC{} \emph{maximizing the revenue} with additional constraints that take into account a limited willingness to pay of the passengers.
Also, \cite[Thm.~4]{Otto2017} shows that setting the prices can be done in $\cO(\kappa\cdot \vert D \vert)$ in case the zones are given, where $\kappa$ is the maximum number of traversed zones.
We contribute to these results with a complexity analysis of the zone tariff design problems \ZDXY{} with $\textup{X} \in \{\textup{M, S}\}$ and  $\textup{Y} \in \{\textup{A,C}\}$ that \emph{minimize the sum of absolute deviations} from reference prices including the no-elongation property and the no-stopover property.
An overview of the results is given in \Cref{tab:complexity-results}.
\begin{table}
	\centering
	\begin{tabular}{lc@{\hskip 4pt}c@{\hskip 4pt}c@{\hskip 4pt}cllll@{~}l}
		\toprule
		&\multicolumn{4}{c}{ZD-}& no- & no- & complexity  & \multicolumn{2}{l}{reference} \\
		&MA & MC & SA & SC &elong.&stop.&&&\\
		\midrule
		general &x & &x&  & w/wo & w/wo & NP-hard &  Thm.&\ref{prop:NP-arbitrary-zones}\\
		\addlinespace
		zone-partition & x & & x & & w/wo & w/wo & NP-hard &  Cor.&\ref{prop:NP-arbitrary-zones-fixed-price}\\
		\addlinespace
		general & & x & & x & w/wo & w/wo & NP-hard & Thm.&\ref{prop:NP-connected-zones}\\
		\addlinespace
		zone-partition & & x & & x & w/wo & w/wo & NP-hard & Cor.&\ref{prop:NP-connected-zones-fixed-price}\\
		\addlinespace
		price-setting & x & x & x & x & wo & wo & $\cO(\vert D \vert)$ &Thm. &\ref{prop:fixed-zones-pricing-unconstrained}\\
		\addlinespace
		price-setting & x & x & x & x & w & wo & $\cO(\kappa \cdot \vert D \vert)$ &Thm.&\ref{prop:zone-no-elong-alg-correctness}\\
		\addlinespace
		price-setting & x & x & x & x & w/wo & w & polynomial & LP & \eqref{lp:price-setting-poly}\\
		\bottomrule
	\end{tabular}
	\caption{Overview of the complexity results for the variants of the zone tariff design problem. Abbreviations: w $=$ with, wo $=$ without.}
	\label{tab:complexity-results}
\end{table}

In order to analyze the complexity, we consider the decision version of the zone tariff design problem.
The problem changes such that we have an additional input parameter $J \in \RR_{\geq 0}$ and search for a zone partition and a price function such that $\sum_{d\in D} t_d \vert r_d - P(\sigma(W_d))\vert \leq J$.

Note that all four variants \ZDMA{}, \ZDMC{}, \ZDSA{}, \ZDSC{} are in NP.

First, we show that the zone tariff design problem with arbitrary zones is NP-complete.

\begin{theorem}
	\label{prop:NP-arbitrary-zones}
	The problems \ZDMA{} and \ZDSA{} are NP-complete
	\begin{itemize}
		\item with/without requiring the no-elongation property,
		\item with/without requiring the no-stopover property,
		\item even if $N=2$.
	\end{itemize}
\end{theorem}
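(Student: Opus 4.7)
The plan is to establish membership in NP (a zone partition paired with a price list of length at most $|V|$ is a certificate whose objective value can be computed and checked in polynomial time) and then prove NP-hardness by a polynomial-time reduction from \BipartiteSubgraph{}, which asks whether a given graph $G'=(V',E')$ admits a cut of size at least $k$.

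The construction augments $G'$ with three fresh anchor vertices $a$, $b$, $c$: we set $V = V' \cup \{a,b,c\}$, $E = E' \cup \{\{a,b\},\{a,c\},\{b,c\}\} \cup \{\{a,v\}: v \in V'\}$ (the star from $a$ keeps the PTN connected), $N = 2$, and $M = |E'|+1$. The OD pairs are: for every $\{u,v\}\in E'$ an OD pair with path $(u,v)$, reference price $2$ and weight $1$; together with three anchor OD pairs, namely $(a,b)$ with reference $0$ and weight $M$, and $(a,c), (b,c)$ each with reference $2$ and weight $M$. Because every path has only two nodes, by \Cref{prop:multiple-equal-single} the zone function $\sigma$ coincides under multiple and single counting, so the same construction handles both \ZDMA{} and \ZDSA{}. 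The decision threshold is $J = 2(|E'|-k)$.

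Correctness will be shown by a case analysis on how $\{a,b,c\}$ splits between the two zones, up to zone relabeling. If the three anchors lie in one zone, or if $a$ or $b$ is isolated from the other two anchors, then two anchor OD pairs with reference prices $0$ and $2$ share the same $\sigma$-value, and any choice of the corresponding price $p$ incurs $M|p-0|+M|p-2|\geq 2M > J$ at the anchors alone. The remaining configuration, with $c$ alone in one zone and $\{a,b\}$ in the other, places anchor $(a,b)$ at $\sigma = 1$ with reference $0$ and weight $M$, and anchors $(a,c),(b,c)$ at $\sigma = 2$ each with reference $2$ and weight $M$. Since $M$ exceeds the combined unit weights of all edge-derived OD pairs, the weighted medians from \Cref{definition:weighted-median} pin $p_1 = 0$ and $p_2 = 2$, so the objective value equals $2\cdot(|E'| - m_2)$, where $m_2$ is the size of the cut in $G'$ induced by the restriction of the zone partition to $V'$. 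Optimizing over partitions thus reduces exactly to \BipartiteSubgraph{}, and the threshold $J$ is attained iff $G'$ has a cut of size at least $k$.

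The same reduction will handle all four combinations of the no-elongation and no-stopover requirements. The optimum prices $p_1 = 0 \leq p_2 = 2$ are monotone, and the lower bound $M(|p| + |p-2|)\geq 2M$ ruling out the bad configurations involves only a single price variable and hence remains valid under any ordering constraint between $p_1$ and $p_2$. Since all OD paths have just two nodes, we may extend the price function by $P(k) = p_2$ for $k \geq 2$ as in \Cref{remark:zone-price-list-k}, after which the sufficient no-stopover conditions of \Cref{prop:zone-properties} are satisfied trivially. The main technical subtlety to check is the weighted-median computation in the good configuration: the weight-$M$ anchor at reference $0$ must outweigh the combined unit weights of the (at most $|E'|$) edge-derived OD pairs lying in the same zone, which is precisely ensured by the choice $M = |E'|+1$.
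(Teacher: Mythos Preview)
Your proposal is correct and follows essentially the same approach as the paper's proof: both reduce from \BipartiteSubgraph{} by attaching a small anchor gadget with heavy-weight OD pairs that force the price levels and rule out all but one way of splitting the anchors between the two zones, so that the remaining freedom is exactly the bipartition of the original vertex set. The differences are cosmetic---you use three fresh anchors $a,b,c$ with reference prices $\{0,2\}$ and threshold $J=2(|E'|-k)$, whereas the paper uses two fresh nodes plus one existing node with reference prices $\{1,2\}$ and threshold $J=|E|-Q'$; your explicit star from $a$ to $V'$ to guarantee PTN connectivity is a nice touch that the paper leaves implicit.
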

\begin{proof}
	We use a reduction from \BipartiteSubgraph{}~\cite[Problem GT25]{Garey1979}, which is defined as follows:\\
	\textbf{Instance:} graph $G=(V,E)$, positive integer $Q'\in \NN_{\geq 1}$ with $Q'\leq \vert E \vert$.\\
	\textbf{Question:} Is there a subset $E'\subseteq E$ with $\vert E' \vert \geq Q'$ such that $(V,E')$ is bipartite?
	
	We first give the following equivalent formulation of \BipartiteSubgraph{}:
	Are there sets $A,B\subseteq V$ such that $A\cup B = V\!$, $A\cap B=\emptyset$ and 
	\[\interior(A,B)\defeq \{\{v,w\}\in E: v,w\in A\}\cup \{\{v,w\}\in E: v,w\in B\}\]
	satisfies $\vert \interior(A,B)\vert \leq Q \defeq \vert E \vert -Q'$?
	In this case, $(A,B)$ is the bipartition of the graph, $\interior(A,B)$ is the edge set that needs to be deleted to receive a bipartite graph regarding the bipartition $(A,B)$, and the remaining edges are those of the cut set
	\[\cut(A,B) \defeq \{\{v,w\} \in E: \{v,w\}\cap A \neq \emptyset, \{v,w\}\cap B \neq \emptyset\},\]
	which are $\vert \cut(A,B) \vert = \vert E\vert - \vert \interior(A,B)\vert \geq Q'$ many.
	
	Let an instance $G=(V,E), Q$ of \BipartiteSubgraph{} be given. 
	We construct an instance of \textup{ZD-XA}, $\textup{X}\in \{\textup{M, S}\}$.
	Let $x_1$ and $x_2$ be additional nodes not contained in $V\!$, and let $u\in V$ be chosen arbitrarily but fixed.
	We define:
	\begin{equation*}
		\begin{aligned}
			\bar{V} &\defeq V \cup \{x_1,x_2\},\\
			\bar{E} &\defeq E \cup \{ \{u,x_1\},\{u,x_2\}, \{x_1,x_2\}\},\\
			D &\defeq \{(v_1,v_2): \{v_1,v_2\}\in \bar{E}\},\\
			r_d &\defeq \begin{cases}
				2	&\textup{for all } d\in D\setminus \{(x_1,x_2)\},\\
				1	&\textup{for } d=(x_1,x_2),
			\end{cases}\\
			t_d &\defeq \begin{cases}
				1	&\textup{for all } d\in D\setminus \{(u,x_1),(u,x_2), (x_1,x_2)\},\\
				M \defeq \vert E \vert +1	&\textup{for all } d\in \{(u,x_1),(u,x_2), (x_1,x_2)\},
			\end{cases}\\
			W_d &\defeq d \text{\quad for all } d\in D,\\
			N &\defeq 2,\\
			J &\defeq Q\leq \vert E\vert.
		\end{aligned}
	\end{equation*}

	\begin{figure}
		\centering
		\begin{tikzpicture}[thick, main/.style = {draw, circle,inner sep=1pt,minimum size=.7cm}]
			\node[font=\large] (G) at (-1.2,0.5) {$G$};
			\node[main] (u) at (1,0) {$u$};
			\node[main] (x1) at (4,1) {$x_1$};
			\node[main] (x2) at (4,-1) {$x_2$};
			\draw (0,0) ellipse [x radius=2cm,y radius=1cm];
			
			\draw (u)--(x1) node[style={pos=0.65}, fill=white, sloped, font=\footnotesize] {(\color{red}{2},\color{blue}{$M$})};
			\draw (u)--(x2) node[style={pos=0.65}, fill=white, sloped, font=\footnotesize] {(\color{red}{2},\color{blue}{$M$})};
			\draw (x1)--(x2) node[midway, fill=white, font=\footnotesize] {(\color{red}{1},\color{blue}{$M$})};
			
			\draw[dashed] (u)--(0.5,-0.5);
			\draw[dashed] (u)--(0.3,0) node[left, fill=white, sloped, font=\footnotesize] {(\color{red}{2},\color{blue}{1})};
			\draw[dashed] (u)--(0.5,0.5);
		\end{tikzpicture}
		\caption{Graph construction for the proof of \Cref{prop:NP-arbitrary-zones}. The node $u$ is an arbitrary but fixed node in the graph $G$ (indicated by the ellipsoid). The additional nodes $x_1$ and $x_2$ are connected with each other and with $u$. The OD pairs correspond to the edges. The reference prices $r_d$ (red) and numbers of passengers $t_d$ (blue) per OD pair $d\in D$ are given on the edges as $(r_d,t_d)$.}
		\label{fig:NP-arbitrary-zones}
	\end{figure}
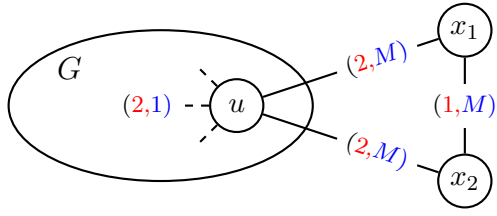

	This means that each OD pair corresponds to an edge and each edge in the new network is a path used by one passenger with reference price 2 for the edges in $E$, and by $M > \vert E \vert$ passengers with reference price 1 or 2 for the edges in $\bar{E}\setminus E$.
	For the maximum number of nodes of a path $K$, we thus have $K=2$.
	The construction is shown in \Cref{fig:NP-arbitrary-zones}.
	Solving the zone tariff design problem on this instance, we need to determine two zones $A$ and $B$ and two prices $(p_1,p_2)$ (see \Cref{remark:zone-price-list-k}).
	For the zone partition, we have the following three options for $x_1, x_2, u$:
	\begin{itemize}
		\item First, $x_1,x_2,u$ are in the same zone.
		Because $\sigma((u,x_1))= \sigma((x_1,x_2))=1$, the contribution to the objective function value is at least $M\cdot\vert 2-p_1 \vert + M \cdot \vert 1-p_1\vert \geq M = \vert E \vert +1 > Q = J$.
		Hence, this option does not yield a feasible solution.	
		
		\item Second, $x_1,u\in A, x_2\in B$ (w.l.o.g.).
		Because $\sigma((u,x_2))= \sigma((x_1,x_2))=2$, the contribution to the objective function value is at least $M\cdot\vert 2-p_2 \vert + M \cdot \vert 1-p_2\vert \geq M = \vert E \vert +1 > Q = J$.
		Hence, this option does also not yield a feasible solution.
		
		\item Third, $x_1,x_2\in A, u\in B$ (w.l.o.g.).
		The objective function value then is
		\begin{equation*}
			\begin{aligned}
				\sum_{d\in D} t_d \vert r_d - P(\sigma(W_d))\vert
				={}& M \cdot \vert 1- p_1\vert + 2\cdot M\cdot \vert 2- p_2\vert\\
				&+ \sum_{e\in \interior(A,B)}\vert 2-p_1\vert + \sum_{e\in \cut(A,B)} \vert 2-p_2 \vert,\\
			\end{aligned}
		\end{equation*}
		where it is optimal to choose $p^{\ast}_2 = 2$.
		Because $M = \vert E\vert +1 > \vert \interior(A,B)\vert$, it is optimal to choose $p^{\ast}_1=1$. 
		This yields an overall objective function value of $\vert \interior(A,B)\vert$.
	\end{itemize}
	It is hence only possible to obtain a feasible solution if $x_1$ and $x_2$ are in the same zone and $u$ is in the other zone.
	A solution to the remaining problem in this third option is a solution to \BipartiteSubgraph{} and the other way around:
	In both cases, a bipartition of the nodes~$V$ into sets~$A$ and~$B$ has to be found such that $\vert \interior(A,B) \vert \leq Q$.
	
	Note that the resulting prices satisfy the no-elongation property (due to the monotonicity of the price list) and the no-stopover property (because the price $P(k)$ is constant for $k\geq 2$).
	Hence, the construction does the same if these properties are required.
	Because each path $W_d$, $d\in D$ consists of only one edge, the number of zones with multiple counting and with single counting are the same by \Cref{prop:multiple-equal-single}.
\end{proof}

Next, we show NP-completeness of the zone tariff design problem with the requirement of connected zones.
This does not follow from the proof of \Cref{prop:NP-arbitrary-zones} because the resulting zones will in most cases not be connected.
For the NP-completeness proof, we make use of the NP-complete problem \Multicut{}, which was used in \cite{Otto2017}, as well.

\begin{theorem}
	\label{prop:NP-connected-zones}
	The problems \ZDMC{} and \ZDSC{} for connected zones are NP-complete
	\begin{itemize}
		\item with/without requiring the no-elongation property,
		\item with/without requiring the no-stopover property,
		\item even if the graph is a tree.
	\end{itemize}
\end{theorem}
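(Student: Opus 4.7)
The plan is to reduce from \Multicut{} on trees, which is known to be NP-complete and was employed in \cite{Otto2017} for a similar purpose. An instance consists of a tree $T_0=(V_0,E_0)$, pairs $(s_i,t_i)_{i=1}^p$ and a budget $k\in\NN_{\geq 1}$, and asks whether one can delete at most $k$ edges of $T_0$ so that each $s_i$ is separated from $t_i$. From such an instance I would construct a \ZDMC{} instance on a tree $T$ obtained from $T_0$ by attaching a pendant path of $2k+1$ edges to an arbitrary vertex (this size padding is crucial for the forward direction since it guarantees $|E|\geq 2k+2$). I would introduce two classes of OD pairs: for every edge $\{u,v\}\in E$ an \emph{edge} OD pair with path $(u,v)$, reference price $1$ and weight $1$; and for every pair $(s_i,t_i)$ a \emph{multicut} OD pair with path equal to the unique simple path between $s_i$ and $t_i$ in $T$, reference price $2$ and large weight $M\defeq |E|+1$. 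Then I set $N\defeq k+1$ and $J\defeq k$ and ask whether there exist connected zones $\cZ$ with $|\cZ|\leq N$ and a price function $P$ achieving weighted sum of absolute deviations at most $J$.

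The backward direction is easy: given a multicut $F$ with $|F|\leq k$, take the connected components of $T-F$ as the zones (at most $|F|+1\leq N$ of them) and put $P(1)\defeq 1$, $P(j)\defeq 2$ for $j\geq 2$. Each cut edge contributes $1$, each non-cut edge contributes $0$, and every multicut pair is separated by $F$ so has $\sigma\geq 2$ and contributes $0$; the total is $|F|\leq k=J$. Moreover, this $P$ is monotone and satisfies the sufficient conditions of \Cref{prop:zone-properties}, so both the no-elongation and the no-stopover property hold ``for free''.

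The forward direction will be the technically delicate step. Any connected zone partition of $T$ with at most $N$ zones corresponds to a set $F\subseteq E$ of boundary edges with $|F|=|\cZ|-1\leq k$, which is the candidate multicut, so the budget is automatically respected. What must be ruled out is that some multicut pair is left unseparated, irrespective of the price list chosen. If $m_{\mathrm{unsep}}\geq 1$ multicut pairs were unseparated, the $\sigma=1$ contribution to the objective,
\[(|E|-|F|)\cdot|1-P(1)|+M\,m_{\mathrm{unsep}}\cdot|2-P(1)|,\]
would be a convex function of $P(1)$ whose minimum is attained at a weighted median; since $M\,m_{\mathrm{unsep}}\geq M>|E|\geq |E|-|F|$, this minimum equals $|E|-|F|\geq k+2>k=J$. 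Hence the total objective exceeds $J$ for every admissible $P$, contradicting the assumption, so every multicut pair is separated and $F$ is a valid multicut. The main obstacle is precisely this step, because the bound must hold uniformly over \emph{all} possible price lists, not just the natural one used in the backward direction.

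The forward argument places no restriction on $P$, so additionally enforcing the no-elongation and/or no-stopover property only shrinks the feasible set without affecting either direction, and NP-hardness is preserved for all four property combinations. By \Cref{prop:relations}~(2), on a tree the multiple- and single-counting zone functions coincide on the unique simple paths $W_d$, so the identical construction also proves NP-completeness of \ZDSC{}. Finally, membership in NP for all variants follows because, as will be shown in \Cref{prop:fixed-zones-pricing-unconstrained}, an optimal price list may be taken with values among the reference prices, yielding a polynomial-size certificate.
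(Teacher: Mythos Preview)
Your reduction from \Multicut{} on trees is correct and follows the same overall strategy as the paper: attach a pendant path to force a level with price~$1$, then argue that any feasible solution must separate all source--terminal pairs. The differences are in the encoding. The paper reduces from \Multicut{} on \emph{stars} with unit weights and sets $J=0$: its OD pairs are precisely the source--terminal pairs (reference price~$2$) together with the $Q{+}1$ pendant-path edges (reference price~$1$), all with weight~$1$. Since at most $Q$ boundary edges exist but there are $Q{+}1$ pendant edges, some pendant edge lies inside a zone, pinning $p_1=1$ exactly; then $J=0$ forces every source--terminal pair to leave its zone. Your version instead uses \emph{all} tree edges as OD pairs, puts a large weight $M=|E|+1$ on each source--terminal pair, and sets $J=k$; the forward direction then relies on a weighted-median argument together with the padding bound $|E|-|F|\geq k+2$ to show that any unseparated pair already pushes the level-$1$ cost above~$J$.

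Both arguments are sound. The paper's $J=0$ construction is slightly cleaner (no large weights, no median computation in the forward direction), while your construction is more robust in that it does not need the star structure and makes explicit how the slack $J=k$ is absorbed by the boundary edges. Two minor remarks: your appeal to \Cref{prop:relations}(2) is fine for concluding $z^{\ast}_{\textup{MC}}=z^{\ast}_{\textup{SC}}$, but the underlying reason that $\sigma$ coincides for both countings is really \Cref{prop:multiple-equal-single} (connected zones on a tree, simple paths); and for NP membership with the no-elongation/no-stopover constraints, \Cref{prop:fixed-zones-pricing-unconstrained} alone does not cover the constrained case---it is cleaner to cite the polynomial-size LP~\eqref{lp:price-setting-poly} for the certificate.
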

\begin{proof}
	We use a reduction from \Multicut{} on a star graph with unit weights:\\
	\textbf{Instance:} graph $G=(V,E)$, edge weights $c_e\in \RR_{\geq 0}$ for all $e\in E$, a set of source-terminal pairs $\cC \subseteq V\times V\!$, non-negative integer $Q\in \NN_{\geq 1}$.\\
	\textbf{Question:} Is there a subset $\bar{E}\subseteq E$ with $\sum_{e\in \bar{E}} c_e \leq Q$ and all source-terminal pairs in $\cC$ are separated by $\bar{E}$?\\
	It was shown in \cite[Prop.~3.1, Thm.~3.1]{Garg1997} that \Multicut{} is NP-complete even on a star graph with unit weights $c_e=1$ for all $e\in E$. 
	In the following, we denote this special case by \Multicut{}.
	Because the problem is polynomially solvable if $\vert \cC \vert \leq Q$ (delete one edge per source-terminal pair in $\cC$), we assume that $Q < \vert \cC \vert$.
	Further, $Q<\vert E \vert = \vert V \vert -1$ because otherwise all edges of the tree $(V,E)$ can be deleted and the problem is trivial.
	
	Let an instance of \Multicut{} consisting of a star graph $G=(V,E)$, unit costs $c_e =1$ for all $e\in E$, source-terminal pairs $\cC$, a non-negative integer $Q$ be given. 
	We construct an instance of \textup{ZD-XC}, $\textup{X}\in \{\textup{M, S}\}$.
	Let $x_1,\ldots, x_{Q+1}$ be additional nodes not contained in $V\!$, and let $x_0\in V$ be the center of the star graph $G$.
	We define:
	\begin{equation*}
		\begin{aligned}
			\bar{V} &\defeq V \cup V_x \text{\quad with } V_x \defeq \{x_j: j\in \{1,\ldots, Q+1\}\},\\
			\bar{E} &\defeq E \cup E_x \text{\quad with } E_x\defeq \{ \{x_j,x_{j+1}\}: j\in \{0,\ldots, Q\}\},\\
			D 		&\defeq \cC \cup D_x \text{\quad with } D_x \defeq \{(x_j,x_{j+1}): j\in \{0,\ldots,Q\}\},\\
			r_d 	&\defeq \begin{cases}
				1	&\textup{for all } d\in D_x,\\
				2	&\textup{for all } d\in \cC,
			\end{cases}\\
			t_d 	&\defeq 1 \text{\quad for all } d\in D,\\
			W_d &\text{ is the unique simple path in } (\bar{V},\bar{E}) \text{ for all } d\in D,\\
			N &\defeq Q+1,\\
			J &\defeq 0.
		\end{aligned}
	\end{equation*}
	This means that each OD pair has one passenger and corresponds either to a source-terminal pair with a reference price of 2, or to a newly added edge with a reference price of 1.
	Hence, for the maximum number of nodes of a path $K$, we have $K\leq 3$.
	Because $Q<\vert \cC \vert$, this is a polynomial reduction.
	The construction is depicted in \Cref{fig:NP-connected-zones}.
	Solving the zone tariff design problem on this instance, we need to determine at most $N$ zones and a price list $(p_1,p_2,p_3)$ (see \Cref{remark:zone-price-list-k}).
	
	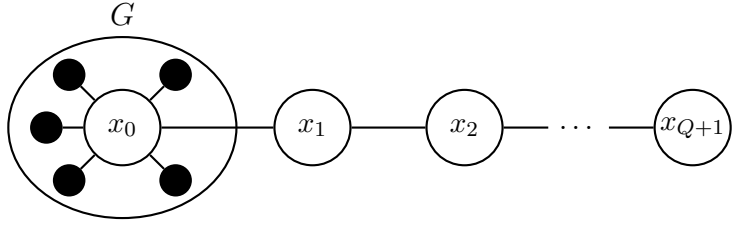
\begin{figure}
		\centering
		\begin{tikzpicture}[thick, main/.style = {draw, circle,inner sep=1pt,minimum size=1cm}, small/.style = {draw, circle, fill=black, inner sep=1pt,minimum size=.4cm}]
			\node[font=\large] (G) at (0,1.5) {$G$};
			\node[main] (v) at (0,0) {$x_0$};
			\draw (0,0) ellipse [x radius=1.5cm,y radius=1.2cm];
			
			\node[main] (x1) at (2.5,0) {$x_1$};
			\node[main] (x2) at (4.5,0) {$x_2$};
			\node[]   (u)   at (6,0) {$\dots$};
			\node[main] (x3) at (7.5,0) {$x_{Q+1}$};
			
			\node[small] (s1) at (0.7,-0.7) {};
			\node[small] (s2) at (0.7,0.7) {};
			\node[small] (s3) at (-0.7,0.7) {};
			\node[small] (s4) at (-1,0) {};
			\node[small] (s5) at (-0.7,-0.7) {};
			
			\draw (v)--(s1);
			\draw (v)--(s2);
			\draw (v)--(s3);
			\draw (v)--(s4);
			\draw (v)--(s5);
			
			\draw (v) -- (x1) -- (x2) -- (u)-- (x3);
		\end{tikzpicture}
		\caption{Graph construction for the proof of \Cref{prop:NP-connected-zones}. The star graph $G$ with center $x_0$ is indicated in the ellipsoid. It is extended by a path consisting of $x_0,\ldots, x_{Q+1}$.}
		\label{fig:NP-connected-zones}
	\end{figure}
	
	We show that there is a solution $E'$ to \Multicut{} if and only if there is a solution $\cZ,P$ to \textup{ZD-XC}, ${\textup{X}\in \{\textup{M, S}\}}$.
	
	For the first direction, let $E'\subseteq E$ be a solution to \Multicut{}.
	Deleting an edge $e\in E'$ in the star graph generates a new connected component.
	Thus $(\bar{V}, \bar{E}\setminus E')$ has $L \defeq 1+\vert E'\vert \leq 1+Q =N$ connected components.
	We define the connected components to be the zones $Z_1,\ldots, Z_L$, in particular $G[Z_i]$ is connected for all $i\in \{1,\ldots, L\}$.
	Because all pairs in $\cC$ are separated by $E'$ and $E' \cap E_x =\emptyset$, we have $\sigma(W_d)\in \{2,3\}$ for all $d\in D$, and $\sigma(W_d)=1$ for all $d\in D_x$.
	By setting $p^{\ast}_1 =1$ and $p^{\ast}_2 = p^{\ast}_3 = 2 \eqdef p^{\ast}_{2,3}$, we obtain a feasible solution to \textup{ZD-XC}, ${\textup{X}\in \{\textup{M, S}\}}$:
	\begin{equation*}
		\begin{aligned}
			\sum_{d\in D} t_d \vert r_d - P(\sigma(W_d)) \vert
			=  \sum_{d\in \cC} \vert 2- p^{\ast}_{2,3} \vert + \sum_{d\in D_x} \vert 1- p_1^{\ast} \vert
			= \sum_{d\in \cC} \vert 2 - 2 \vert + \sum_{d\in D_x} \vert 1 -1 \vert
			= 0.
		\end{aligned}
	\end{equation*}
	
	For the other direction, let a zone partition $\cZ=\{Z_1,\ldots,Z_L\}$ with $L\leq N$ and a price function~$P$ be a solution to \textup{ZD-XC}, ${\textup{X}\in \{\textup{M, S}\}}$.
	Because $J =0$, we have $P(\sigma(W_d))=r_d$ for all $d\in D$.
	We set 
	\[E' \defeq \{ \{v_1,v_2\}\in E: v_1\in Z_i, v_2\in Z_j, i\neq j\}.\]
	It holds that $\vert E' \vert \leq Q = N-1$ because $G=(V,E)$ is a star graph and $G[Z_i]$ is connected for all $i\in \{1,\ldots, L\}$: 
	Otherwise, if $\vert E' \vert>N-1$, there would be more than $N$ connected components, which would mean that at least one zone $Z_i$ would not be connected.
	Next, we show that all source-terminal pairs in $\cC$ are separated by $E'$.
	Note that it cannot happen that each of the nodes $x_1, \ldots, x_{Q+1}$ forms a singleton zone only containing that node because then at least $Q+2$ zones would be needed, which is not feasible because $Q+2 >N$.
	Hence, there is an $i\in \{1,\ldots, L\}$ and a $j\in \{0,\ldots, Q\}$ such that $\{x_j, x_{j+1} \}\subseteq Z_i$.
	Therefore, $\sigma(W_{(x_j,x_{j+1})})=1$.
	Because $J=0$, we have that $p_1 = r_{(x_j,x_{j+1})} = 1$.
	Again because $J =0$ and $r_d=2 \neq 1 = p_1$ for all $d\in \cC$, we obtain $\sigma(W_d)\in \{2,3\}$ for all $d\in \cC$ and $p_2=p_3=2$.
	Hence, no source-terminal pair is in the same zone.
	This means that for all $d\in \cC$, there exists an edge $\{v_1,v_2\}\in E(W_d)$ with $v_1$ and $v_2$ in different zones, and hence $\{v_1,v_2\} \in E'$.
	Therefore, all source-terminal pairs in $\cC$ are separated by $E'$.
	
	Note that the resulting prices satisfy the no-elongation property and the no-stopover property.
	Because the zones are connected and the paths are the unique simple paths in the tree, the number of zones with multiple counting and with single counting are the same by \Cref{prop:relations},.
\end{proof}

\paragraph*{The Zone-Partition Subproblem}
Let us now consider the zone-partition subproblem, i.e., the zone tariff design problem with a given price function that only asks for the zone partition.
For $N=1$, this is simple because there is exactly one feasible solution, namely all nodes are in the same zone.
However, the zone-partition subproblem with arbitrary zones is already NP-complete if we have $N=2$, and the zone-partition subproblem with connected zones is already NP-complete if the graph is a tree as the following corollaries show.

\begin{corollary}
	\label{prop:NP-arbitrary-zones-fixed-price}
	The zone-partition subproblem of \ZDMA{} and \ZDSA{} (with arbitrary zones) is NP-complete, even if $N=2$.
\end{corollary}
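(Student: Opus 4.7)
The plan is to reuse the reduction from \BipartiteSubgraph{} used in the proof of \Cref{prop:NP-arbitrary-zones}, now supplying the price function as part of the input. Concretely, I would keep the graph $(\bar V, \bar E)$, OD pairs $D$, reference prices $r_d$, passenger numbers $t_d$, bound $N = 2$ and threshold $J = Q$ exactly as constructed there, and additionally fix $P(1) \defeq 1$ and $P(k) \defeq 2$ for all $k \geq 2$. By \Cref{remark:zone-price-list-k}, this specifies the entire relevant price list, since every path $W_d$ consists of only two nodes and hence at most two zones can be traversed.

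Next, I would rerun the three-case analysis from the proof of \Cref{prop:NP-arbitrary-zones} with these concrete prices plugged in. In each of the two ``bad'' configurations of $\{u, x_1, x_2\}$, at least one of the three gadget OD pairs contributes $M\cdot |r_d - P(\sigma(W_d))| = M = \vert E \vert + 1 > Q = J$, so no feasible solution exists. Hence any solution achieving the threshold must place $\{x_1, x_2\}$ in one zone and $u$ in the other, and the residual contribution equals
\[ \sum_{e \in \interior(A,B)} \vert 2 - 1 \vert + \sum_{e \in \cut(A,B)} \vert 2 - 2 \vert \;=\; \vert \interior(A,B)\vert. \]
Thus a feasible zone partition with objective at most $J$ exists if and only if $\vert\interior(A,B)\vert \leq Q$, which is equivalent to the \BipartiteSubgraph{} instance having a bipartite subgraph with at least $Q' = \vert E \vert - Q$ edges.

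Membership in NP is immediate. Since each path $W_d$ consists of a single edge, the multiple and single counting zone functions coincide by \Cref{prop:multiple-equal-single}, so the same reduction settles ZD-MA and ZD-SA simultaneously, and the resulting price list is monotonically increasing and constant from index $2$ onward, so the no-elongation and no-stopover properties are preserved exactly as in \Cref{prop:NP-arbitrary-zones}. The only conceptual point to verify, which turns out to be the trivial crux, is that the optimal prices forced in the proof of \Cref{prop:NP-arbitrary-zones} depend only on the gadget $\{u, x_1, x_2\}$ and not on the unknown partition of $V$; this is precisely what licenses supplying them a priori as part of the zone-partition subproblem input.
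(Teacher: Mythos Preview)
Your proposal is correct and takes essentially the same approach as the paper: the paper's one-sentence proof simply states that the construction of \Cref{prop:NP-arbitrary-zones} works analogously when $p_1^\ast \defeq 1$ and $p_2^\ast \defeq 2$ are fixed in advance, which is exactly what you do, with the added benefit that you spell out why the three-case analysis still goes through with these hard-coded prices.
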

\begin{proof}
	The proof of \Cref{prop:NP-arbitrary-zones} works analogously if we set $p_1^{\ast}\defeq 1$ and $p_2^{\ast}\defeq 2$ already in the construction of the instance of the zone tariff design problem. 
\end{proof}

\begin{corollary}
	\label{prop:NP-connected-zones-fixed-price}
	The zone-partition subproblem of \ZDMC{} and \ZDSC{} (with connected zones) is NP-complete, even if the graph is a tree.
\end{corollary}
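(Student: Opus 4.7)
The plan is to mimic the reduction used in the proof of \Cref{prop:NP-connected-zones}, but additionally fix the price function in advance as part of the constructed instance. This is possible because the proof of \Cref{prop:NP-connected-zones} essentially forces a unique price list on any feasible solution with objective value~$J=0$.

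More precisely, I would reduce from \Multicut{} on a star graph with unit weights in exactly the same way: given an instance $G=(V,E)$, $\cC$, $Q$ with $Q<\vert \cC\vert$ and $Q<\vert E\vert$, I build the extended graph $(\bar{V},\bar{E})$ by attaching a path $x_0,x_1,\ldots,x_{Q+1}$ at the center $x_0$ of the star, take the OD pairs, reference prices, passenger numbers and paths as in that proof, and set $N \defeq Q+1$, $J\defeq 0$. The resulting graph is a star with a path attached at its center, hence still a tree. Since the maximum number of nodes on any path $W_d$ is $K\leq 3$, the price function is represented by a list $(p_1,p_2,p_3)$. I now fix this price list in advance by setting $p_1 \defeq 1$ and $p_2 \defeq p_3 \defeq 2$, thereby turning the construction into an instance of the zone-partition subproblem.

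Correctness then follows almost directly from the proof of \Cref{prop:NP-connected-zones}. If $E'$ solves \Multicut{}, taking the connected components of $(\bar{V},\bar{E}\setminus E')$ as zones gives at most $N$ connected zones, and with the fixed prices $(1,2,2)$ each OD pair meets its reference price exactly, yielding objective value $0$. Conversely, given a feasible zone partition $\cZ$ with connected zones achieving objective value $0$ under the fixed prices, one reads off $E'$ as in the original proof: the set of edges of $E$ whose endpoints lie in different zones. The same counting argument based on $\vert E'\vert \leq N-1$ and the fact that not all $x_j$ can form singleton zones (otherwise more than $N$ zones would be needed) shows that $E'$ is a multicut of cardinality at most $Q$. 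Connectedness of the zones on the tree ensures that \ZDMC{} and \ZDSC{} coincide via \Cref{prop:relations}, so the argument covers both problems simultaneously. Membership in NP is inherited from the general problem.

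The main (minor) obstacle is making sure that fixing the price list is consistent with the structural forcing in the original proof; but since the original argument showed that any feasible solution with $J=0$ must use precisely $(p_1,p_2,p_3)=(1,2,2)$, this is automatic. No new ideas beyond the \Cref{prop:NP-connected-zones} reduction are required.
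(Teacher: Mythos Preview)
Your proposal is correct and follows exactly the paper's approach: the paper's own proof is the single sentence that the reduction of \Cref{prop:NP-connected-zones} works verbatim once one fixes $p_1^{\ast}\defeq 1$ and $p_2^{\ast}=p_3^{\ast}\defeq 2$ in the constructed instance. Your write-up just spells this out in more detail; the only (harmless) imprecision is that in the converse direction the singleton-$x_j$ observation is not what bounds $\vert E'\vert$---that comes purely from connectedness on the star---whereas the fixed prices together with $J=0$ are what force the source--terminal pairs to be separated.
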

\begin{proof}
	The proof of \Cref{prop:NP-connected-zones} works analogously if we set $p_1^{\ast}\defeq 1$ and $p_2^{\ast}= p_3^{\ast}\defeq 2$ already in the construction of the instance of the zone tariff design problem with connected zones. 
\end{proof}

We remark that this is also true for the zone-partition subproblems with single counting and with connected or ring zones with the objective of maximizing the revenue in \cite{Otto2017} because there the prices are always set to the same values independent of the instance, as well.

\paragraph{The Price-Setting Subproblem}
We have seen that the zone-partition subproblem remains NP-complete in both cases with and without the requirement of connected zones.
We now study the price-setting subproblem.
Let a zone partition $\cZ$ be given.
We define $\kappa\defeq \max_{d\in D} \sigma(W_d)$ as the maximum number of zones that are traversed along a path.
In general, $\kappa$ is bounded from above by the maximum number $K$ of nodes of a path, which is at most $\vert V \vert$ if the paths are elementary, i.e., $\kappa\leq K \leq \vert V \vert$.
In the single counting case, it holds $\kappa\leq N$.

For $k\in \{1,\ldots,\kappa\}$, we set 
\[D_k \defeq \{d\in D: \sigma(W_d)=k\},\]
which is the set of all OD pairs that traverse $k$ zones.
Only in this step and for the according no-stopover property it is important whether multiple or single counting is considered.
Otherwise, the price-setting subproblem does not differ between the two variants.

The price-setting subproblem without requiring the no-elongation property and the no-stopover property, which we call the \emph{unrestricted price-setting subproblem}, is
\begin{mini}
	{p_k}{\sum_{k=1}^{\kappa} \sum_{d\in D_k} t_d \vert r_d - p_k\vert}{\label{ip:zone-no-properties}}{} 
	\addConstraint{0}{\leq p_k}{\quad\textup{for all } k\in \{1,\ldots,\kappa\}}
	\addConstraint{p_k}{\in \RR}{\quad\textup{for all } k\in \{1,\ldots,\kappa\},}
\end{mini}
which breaks down into a flat tariff design problem (see \Cref{sec:flat-tariff}) for each number of traversed zones~$k$ (see \cite[Thm.~1]{Hamacher2004}) and can hence be solved in linear time by \Cref{prop:fixed-zones-pricing-unconstrained}.

\begin{theorem}
	\label{prop:fixed-zones-pricing-unconstrained}
	A price list $(p_1,\ldots,p_{\kappa})$ is an optimal solution to the price-setting subproblem \eqref{ip:zone-no-properties} if and only if ${p_k \in \weightedMedian_{d\in D_k}(r_d,t_d)}$ if $D_k\neq \emptyset$, and $p_k\in \RR_{\geq 0}$ arbitrary otherwise.
	Thus, the unrestricted price-setting subproblem can be solved in $\cO(D)$.
	
	In particular, there is an optimal solution with $\{p_1,\ldots,p_{\kappa}\} \subseteq \{r_d: d\in D\}$.
\end{theorem}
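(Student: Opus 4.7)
The plan is to exploit the fact that the objective function of \eqref{ip:zone-no-properties} decomposes over $k\in \{1,\ldots,\kappa\}$, since each variable $p_k$ appears only in the inner sum over $D_k$ and the constraints are separable ($p_k \in \RR_{\geq 0}$ for each $k$ independently). Hence the problem splits into $\kappa$ independent one-dimensional subproblems: for each $k$, minimize $\sum_{d\in D_k} t_d\vert r_d - p_k\vert$ over $p_k\in \RR_{\geq 0}$.

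For the indices $k$ with $D_k = \emptyset$, the corresponding inner sum is empty and contributes $0$ regardless of $p_k$, so any $p_k\in \RR_{\geq 0}$ is optimal (in particular, one can pick $p_k$ to be any reference price).

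For the indices $k$ with $D_k \neq \emptyset$, the subproblem is exactly the flat tariff design problem on the OD set $D_k$ with reference prices $(r_d)_{d\in D_k}$ and weights $(t_d)_{d\in D_k}$. I would directly invoke the analysis from \Cref{sec:flat-tariff}: the non-negativity requirement $p_k\geq 0$ is redundant because $r_d\geq 0$ for all $d$, so the problem coincides with the weighted median problem and its optimal solution set equals $\weightedMedian_{d\in D_k}(r_d,t_d)$. This already yields the characterization of optimality in both directions and, by the known property that the set of weighted medians always contains an element of $\{r_d: d\in D_k\}$ (lower or upper median), it gives the ``in particular'' statement.

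For the runtime claim, I would note that each of the $\kappa$ subproblems can be solved in linear time in $\vert D_k\vert$ using a linear-time (weighted) median selection routine (e.g., \cite{BlumEtAl73,Gurwitz1990}), as already invoked for the flat tariff design problem. Since the sets $D_k$ partition $D$, summing over $k$ yields total running time $\cO\bigl(\sum_{k=1}^{\kappa} \vert D_k\vert\bigr) = \cO(\vert D\vert)$. The main thing to be careful about is the preprocessing step of partitioning $D$ into the $D_k$'s, but $\sigma(W_d)$ can be computed for each $d$ in time proportional to the length of $W_d$, and one can either bundle this with the input or observe that distributing the OD pairs among the $D_k$'s is still linear once $\sigma(W_d)$ is known. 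No real obstacle arises: the result is essentially a separability observation plus a direct reduction to the median result established in \Cref{sec:flat-tariff}.
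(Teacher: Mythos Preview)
Your proposal is correct and follows essentially the same approach as the paper: decompose \eqref{ip:zone-no-properties} into independent flat tariff design problems over the sets $D_k$, invoke the weighted-median characterization from \Cref{sec:flat-tariff}, and sum the linear running times $\cO(\vert D_k\vert)$ to obtain $\cO(\vert D\vert)$. Your treatment is slightly more explicit than the paper's (handling the $D_k=\emptyset$ case and the ``in particular'' statement separately), but the argument is the same.
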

\begin{proof}
	The unrestricted price-setting subproblem breaks down into a flat tariff design problem for each number of traversed zones~$k\in \{1,\ldots, \kappa\}$.
	As we saw in \Cref{sec:flat-tariff}, for each $k\in \{1,\ldots, \kappa\}$ with $D_k\neq \emptyset$, we have that $p_k$ is an optimal solution to the $k$-th flat tariff design problem if and only if $p_k\in \weightedMedian(D_k)$.
	These values can be computed in $\sum_{k\in \{1,\ldots,\kappa\}} \cO(\vert D_k\vert) = \cO(\vert D\vert)$.
\end{proof}

If the no-elongation property and the no-stopover property are required, which are implemented with their sufficient conditions (see \Cref{prop:zone-properties}, with different constraints for multiple counting [M] and single counting [S]), the price-setting subproblem can be solved in polynomial time with respect to $\vert D \vert$ and $\kappa$ by the following LP formulation:
\begin{mini}
	{p_k,y_d}{\sum_{d\in D} t_d y_d}{\label{lp:price-setting-poly}}{} 
	\addConstraint{r_d-p_k}{\leq y_d} {\quad\textup{for all } d\in D_k, k\in \{1,\ldots,\kappa\}}
	\addConstraint{p_k-r_d}{\leq y_d}{\quad\textup{for all } d\in D_k, k\in \{1,\ldots,\kappa\}}
	\addConstraint{0}{\leq p_1}
	\addConstraint{p_k}{\leq p_{k+1}}{\quad\textup{for all } k\in \{1,\ldots,\kappa-1\}}
	\addConstraint{\textup{[M]}\quad p_k}{\leq p_i + p_{k-i+1}}{\quad\textup{for all } k\in \{1,\ldots, \kappa\},\ i\in \{1,\ldots,k\}}
	\addConstraint{\textup{[S]}\quad p_k}{\leq p_{i_1} + p_{i_2}}{\quad\textup{for all } k\in \{1,\ldots, \kappa\},\ i_1,i_2\in \{1,\ldots,k\}}
	\addConstraint{}{}{\qquad\textup{with } i_1+i_2 \geq k+1}
	\addConstraint{p_k,y_d}{\in \RR}{\quad\textup{for all } k\in \{1,\ldots,\kappa\},\ d\in D,}
\end{mini}
which has $\vert D \vert + \kappa$ variables and for multiple counting $2\cdot \vert D \vert + \kappa + \frac{\kappa \cdot (\kappa+1)}{2}$ constraints and for single counting $2\cdot \vert D \vert + \kappa + \frac{\kappa \cdot (\kappa+1)(\kappa+2)}{6}$.
The number of constraints for the no-stopover property in case of multiple counting can be reduced to at most $\frac{(\kappa-2)(\kappa+1)}{4}$ by only enforcing the constraint for $k\in \{3,\ldots,\kappa\}$, $i\in \{2,\ldots, \lfloor \frac{k+1}{2}\rfloor\}$ as shown in \cite[Thm.~19]{Schoebel2020}. Similarly, the range for the single counting constraints for the no-stopover property can be reduced to $k \in \{3,\ldots, \kappa\}$, $i_1 \in \{\lceil \frac{k+1}{2}\rceil,\ldots, k\}$, $i_2 \in \{k+1-i_1,\ldots,i_1\}$.

Because of the relevance of increasing prices in practice, we have a closer look at the price-setting subproblem enforcing the no-elongation property by its sufficient condition of a monotonically increasing price function (see \Cref{prop:zone-properties}), which we call the \emph{monotone price-setting subproblem}:
\begin{mini}
	{p_k}{\sum_{k=1}^{\kappa} \sum_{d\in D_k} t_d \vert r_d - p_k\vert}{\label{ip:zone-no-elong}}{} 
	\addConstraint{0}{\leq p_1}{}
	\addConstraint{p_k}{\leq p_{k+1}}{\quad\textup{for all } k\in \{1,\ldots,\kappa-1\}}
	\addConstraint{p_k}{\in \RR}{\quad\textup{for all } k\in \{1,\ldots,\kappa\}.}
\end{mini}

This problem still has $\vert D \vert + \kappa$ variables and $2\cdot \vert D \vert + \kappa$ constraints.
Therefore, we aim to find a better solution method, which is motivated by \Cref{prop:zone-no-elong-wmed} and \Cref{remark:price-level}.

For the unrestricted price-setting subproblem, we know by \Cref{prop:fixed-zones-pricing-unconstrained} that in an optimal solution we have $p_k\in \weightedMedian(D_k)$ for all $k\in \{1,\ldots, \kappa\}$ with $D_k\neq\emptyset$.
Note that this does not hold in general for the monotone price-setting subproblem as the following simple example shows:
Let $\kappa\defeq 3$ and set $D_1 \defeq \{d_1\}$, $t_{d_1}\defeq1$, $r_{d_1}\defeq 2$ and $D_2\defeq \{d_2\}$, $t_{d_2}\defeq 2$, $r_{d_2}\defeq 1$ and $D_3 \defeq \{d_3\}$, $t_{d_3}\defeq 3$, $r_{d_3}\defeq 3$.
Then, $(p_1,p_2,p_3)=(1,1,3)$ is an optimal solution to the monotone price-setting subproblem but $1\notin \{2\}=\weightedMedian(D_1)$.

\begin{lemma}
	\label{prop:zone-no-elong-wmed}
	Let $p=(p^{\ast}_1, \ldots, p^{\ast}_{\kappa})$ be an optimal solution to the monotone price-setting subproblem.
	If $p^{\ast}_1 < \ldots < p^{\ast}_{\kappa}$, then $p^{\ast}_k \in \weightedMedian(D_k)$ for all $k\in \{1,\ldots, \kappa\}$.
\end{lemma}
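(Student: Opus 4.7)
The plan is to exploit the separability of the objective in~\eqref{ip:zone-no-elong} and the convexity of its summands: when the monotonicity constraints are strict everywhere at $p^{\ast}$, each coordinate $p^{\ast}_k$ should be individually optimal for an unconstrained flat-tariff subproblem, which by \Cref{sec:flat-tariff} forces $p^{\ast}_k$ into $\weightedMedian(D_k)$.

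First, I would rewrite the objective as $\varphi(p_1,\ldots,p_\kappa) = \sum_{k=1}^{\kappa} \varphi_k(p_k)$ with $\varphi_k(p_k) \defeq \sum_{d\in D_k} t_d\,|r_d - p_k|$, and observe that each $\varphi_k$ is the continuous, piecewise-linear, convex function studied in \Cref{sec:flat-tariff}, with set of global minimizers over $\RR$ equal to $\weightedMedian(D_k)$ (using the convention, consistent with \Cref{definition:weighted-median}, that every real number lies in $\weightedMedian(D_k)$ when $D_k=\emptyset$).

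Next, I would use the strict chain $p^{\ast}_1 < p^{\ast}_2 < \ldots < p^{\ast}_{\kappa}$ to pick $\varepsilon>0$ small enough that, for each $k$, replacing $p^{\ast}_k$ by $p^{\ast}_k+\delta$ with $\delta\in(-\varepsilon,\varepsilon)$ preserves all monotonicity constraints $p_{k-1}\leq p_k \leq p_{k+1}$ (with the obvious omissions at the endpoints). For $k\geq 2$ this yields a genuine two-sided feasible perturbation, so $p^{\ast}_k$ is a local minimum of $\varphi_k$ on $\RR$; convexity of $\varphi_k$ promotes this to a global minimum, hence $p^{\ast}_k \in \weightedMedian(D_k)$.

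The only delicate case is $k=1$, where the non-negativity constraint $p_1 \geq 0$ blocks downward perturbations when $p^{\ast}_1=0$. If $p^{\ast}_1>0$ the two-sided argument applies verbatim. If $p^{\ast}_1=0$, I would argue that since all $r_d\geq 0$ the lower weighted median $m_1^-$ of $D_1$ satisfies $m_1^-\geq 0$, and $\varphi_1$ is strictly decreasing on $(-\infty,m_1^-)$; were $m_1^->0$, a small upward perturbation would strictly improve the objective while still satisfying $p^{\ast}_1+\delta < p^{\ast}_2$, contradicting optimality of $p^{\ast}$. Hence $m_1^-=0$ and $p^{\ast}_1=0\in\weightedMedian(D_1)$. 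This boundary case at $k=1$ is the only real obstacle I anticipate; the rest is a direct separability-plus-convexity argument.
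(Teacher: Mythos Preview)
Your proof is correct and follows essentially the same approach as the paper: both exploit the separability of the objective and the convexity of each $\varphi_k$, using the strict inequalities $p^{\ast}_{k-1}<p^{\ast}_k<p^{\ast}_{k+1}$ to permit a feasible perturbation of $p^{\ast}_k$ and conclude that it must minimize $\varphi_k$. The paper's version is slightly more economical at $k=1$: rather than a separate boundary analysis, it argues by contradiction and moves $p^{\ast}_k$ \emph{towards} a weighted median $p'_k$, which (being a median of non-negative reference prices) is automatically $\geq 0$, so the non-negativity constraint is never violated---hence setting $p^{\ast}_0\defeq-\infty$ suffices.
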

\begin{proof}
	Let $k\in \{1,\ldots,\kappa\}$ with $D_k \neq \emptyset$.
	Assume $p^{\ast}_k \notin \weightedMedian(D_k)$, and let $p' \in \weightedMedian(D_k)$.
	By \Cref{sec:flat-tariff}, it holds that $p'_k$ minimizes $\varphi_k\colon p_k \mapsto \sum_{d\in D_k} t_d\vert r_d - p_k\vert$, and $\varphi_k(p'_k) < \varphi_k(p^{\ast}_k)$.
	Because $p^{\ast}_{k-1}<p^{\ast}_k<p^{\ast}_{k+1}$ (with $p^{\ast}_0\defeq -\infty$ or $p^{\ast}_{\kappa+1}\defeq+\infty$ if necessary), we can increase or decrease $p^{\ast}_k$ towards $p'_k$ so that the order of the prices remains increasing.
	Because $\varphi_k$ is convex, this leads to a reduction of the objective function value, which is a contradiction to $(p^{\ast}_1,\ldots,p^{\ast}_{\kappa})$ being an optimal solution.
\end{proof}

If we consider the example from before, we see that by merging $D_1$ and $D_2$ to a common price level we indeed get $1\in \weightedMedian(D_1\cup D_2)$.
We formalize this idea in \Cref{remark:price-level}.

\begin{remark}
	\label{remark:price-level}
	We can generalize the result of \Cref{prop:zone-no-elong-wmed} by considering distinct price levels as follows:
	Let $0\leq p^{\ast}_1\leq \ldots\leq p^{\ast}_{\kappa}$ be an optimal solution to the monotone price-setting subproblem.
	Let $q_1,\ldots,q_L\in \RR_{\geq 0}$ for some $L\leq \kappa$ be the distinct price levels that satisfy $q_1< \ldots < q_L$ and $\{p_1^{\ast},\ldots,p_{\kappa}^{\ast}\}=\{q_1,\ldots,q_L\}$.
	For all $l\in \{1,\ldots,L\}$, we set $I_l\defeq\left\{k\in\{1,\ldots,\kappa\}: p_k=q_l\right\}$.
	Analogously to \Cref{prop:zone-no-elong-wmed}, it holds that $q_l \in \weightedMedian(\bigcup_{k\in I_l}D_k)$.
\end{remark}

Note however that \Cref{remark:price-level} only yields a necessary but not sufficient condition for optimal solutions of the monotone price-setting subproblem:
In the previous example, we could set $I_1=\{1\}$, $I_2=\{2,3\}$ with $q_1=1$, $q_2=3$.
Then $q_1 < q_2$ and ${q_1\in\weightedMedian(D_1)}$, ${q_2\in\weightedMedian(D_1\cup D_2)}$, however $(p_1,p_2,p_3)=(1,3,3)$ is not an optimal solution.
It is crucial that we only form a common price level for consecutive prices that violate monotonicity if it is not enforced, as we see in the rest of this section.

The relation of optimal solutions to the monotone price-setting subproblem to weighted medians motivates the development of \Cref{alg:zone-no-elong}, which computes an optimal solution in $\cO(\kappa\cdot\vert D \vert)$ as we prove in \Cref{prop:zone-no-elong-alg-correctness}.

\begin{algorithm}[tb]
	\caption{Price-setting subproblem with monotonically increasing prices}
	\label{alg:zone-no-elong}
	\SetKwInOut{Input}{Input}
	\SetKwInOut{Output}{Output}
	
	\Input{Set of OD pairs $D$ with a partition $D_1,\ldots, D_{K}\neq \emptyset$, numbers of passengers $t_d$ and reference prices $r_d$ for all $d\in D$}
	\Output{A monotonically increasing price list $(p_1,\ldots,p_{\kappa})$}
	
	Initialize\\
	$I\gets [\{1\},\ldots,\{\kappa\}]$, \atcp{Index sets} \label{line:alg-zone-no-elong-initialize-I}\\
	$D\gets [D_1,\ldots,D_{\kappa}]$, \atcp{OD pairs} \label{line:alg-zone-no-elong-initialize-D}\\
	$P \gets [p_1,\ldots,p_{\kappa}]$ with $p_k \in \weightedMedian(D_k)$ for all $k\in \{1,\ldots,\kappa\}$, \atcp{Price levels}\label{line:alg-zone-no-elong-initialize-P}\\
	$k \gets 1$. \atcp{Start list indexing at 1}
	
	\While{$k\neq \text{length}(P)$}{
		\tcp{Check whether the prices are increasing}
		\uIf{$P[k] > P[k + 1]$\label{line:alg-zone-no-elong-begin-merge}}{
			\tcp{Merge operation:}
			\tcp{Merge the entries at position $k$ and $k+1$ and store them at position $k$}
			$I[k] \gets I[k] \cup I[k+1]$\\
			$D[k] \gets D[k] \cup D[k+1]$\\
			$P[k] \gets p_k$ with $p_k\in \weightedMedian(D[k])$\\
			\tcp{Delete the entry at position $k+1$ and thus shorten the lists}
			$I$.delete($k+1$), $D$.delete($k+1$), $P$.delete($k+1$)\\
			\If{$k\neq 1$}{
				$k \gets k-1$ \label{line:alg-zone-no-elong-end-merge}
		} }
		\Else(\label{line:alg-zone-no-elong-begin-move}){
			\tcp{Move operation:}
			$k \gets k+1$\label{line:alg-zone-no-elong-end-move}}
	}
	\For{$l= 1,\ldots,\text{length}(P)$\label{line:alg-zone-no-elong-begin-price-list}}{
		\For{$k \in I[l]$}{
			$p_k \gets P[l]$ \label{line:alg-zone-no-elong-end-price-list}
		}
	}
	
	\Return $(p_1,\ldots,p_{\kappa})$
\end{algorithm}

The idea of \Cref{alg:zone-no-elong} is to start with $\kappa$ price levels, one for each number of traversed zones, and to add constraints ensuring monotonicity when it is violated.
The list $P$ of price levels (sorted by number of traversed zones) is checked whether it is increasing.
Every time a price level $p_k$ and its successive price level $p_{k+1}$ in $P$ are not increasing, they are combined to one common price level, meaning that the price must be the same for all OD pairs assigned to this new combined level although they traverse different numbers of zones.
The new price is computed and the list of price levels~$P$ is checked again for monotonicity.
During the algorithm, $D$ stores a list of corresponding OD pairs for every price level, and $I$ stores the corresponding index sets.
The algorithm terminates when the list of price levels is monotonically increasing.

\Cref{alg:zone-no-elong} consists of two main operations, which are performed within the while-loop.
We call \crefrange{line:alg-zone-no-elong-begin-merge}{line:alg-zone-no-elong-end-merge} the \emph{merge} operation and \crefrange{line:alg-zone-no-elong-begin-move}{line:alg-zone-no-elong-end-move} the \emph{move} operation.
For simplicity, we assume that the input sets $D_1,\ldots, D_{\kappa}$ are not empty.
This condition can be achieved by already merging the empty levels to neighboring ones, i.e., with the next lower or higher number of traversed zones.

\begin{example}
	\label{ex:alg}
	Before we prove correctness, we illustrate \Cref{alg:zone-no-elong} with an example.
	The data for the sets of OD pairs traversing a certain number of zones is derived from a PTN with fixed zones.
	To simplify notation, every OD pair has one passenger ($t_d = 1$), and instead of the OD pairs, we here give a list $R$ of reference prices belonging to the OD pairs, and sort all lists.
	The initial state is shown in \Cref{tab:ex-alg-1}.
	The OD pairs traverse between 1 and 6 zones, the reference price within each level is the same and a weighted median is shown in the last row.
	Because $1\leq 3$, a move operation is performed in the first iteration.
	Hence, the lists do not change.
	In the second iteration, however, we have $3>1$, which leads to a merge operation.
	The levels for traversing 2 and 3 zones are combined, resulting in a new common price level of~3, shown in \Cref{tab:ex-alg-2}.
	The next three iterations are again move operations because $1\leq 3$, $3\leq 5$ and $5\leq 6$, so the state does not change.
	In the sixth iteration, a merge operation is necessary because $6>4$, resulting in \Cref{tab:ex-alg-3}.
	Because also $5>4$ in the seventh iteration, another merge operation is performed leading to \Cref{tab:ex-alg-4}.
	Because then $3\leq 4$, the while-loop terminates and $(1,3,3,4,4,4)$ is returned as the final price list.
	\begin{table}
		\centering
		\begin{subtable}{0.472\textwidth}
			\centering
			\begin{tabular}[t]{lcccccc}
				\toprule
				$I$ & $[1]$ & $[2]$ & $[3]$ & $[4]$ & $[5]$ & $[6]$\\
				\addlinespace
				$R$ &$[1]$ & $[3,3]$ & $[1]$ & $[5]$ & $[6,6]$ & $[4,4,4,4]$\\
				\addlinespace
				$P$ & 1 & 3 & 1 & 5 & 6 & 4\\
				\bottomrule
			\end{tabular}
			\caption{State after the initialization}
			\label{tab:ex-alg-1}
		\end{subtable}%
		\qquad
		\begin{subtable}{0.472\textwidth}
			\centering
			\begin{tabular}[t]{lccccc}
				\toprule
				$I$ & $[1]$ & $[2,3]$ & $[4]$ & $[5]$ & $[6]$\\
				\addlinespace
				$R$ &$[1]$ & $[1,3,3]$ & $[5]$ & $[6,6]$ & $[4,4,4,4]$\\
				\addlinespace
				$P$ & 1 & 3 & 5 & 6 & 4\\
				\bottomrule
			\end{tabular}
			\caption{State after the second iteration}
			\label{tab:ex-alg-2}
		\end{subtable}\\[1em]
		\begin{subtable}[t]{0.45\textwidth}
			\centering
			\begin{tabular}{lcccc}
				\toprule
				$I$ & $[1]$ & $[2,3]$ & $[4]$ & $[5,6]$ \\
				\addlinespace
				$R$ &$[1]$ & $[1,3,3]$ & $[5]$ & $[4,4,4,4,6,6]$\\
				\addlinespace
				$P$ & 1 & 3 & 5 & 4\\
				\bottomrule
			\end{tabular}
			\caption{State after the sixth iteration}
			\label{tab:ex-alg-3}
		\end{subtable}%
		\quad
		\begin{subtable}[t]{0.45\textwidth}
			\centering
			\begin{tabular}{lccc}
				\toprule
				$I$ & $[1]$ & $[2,3]$ & $[4,5,6]$ \\
				\addlinespace
				$R$ &$[1]$ & $[1,3,3]$ & $[4,4,4,4,5,6,6]$\\
				\addlinespace
				$P$ & 1 & 3 & 4\\
				\bottomrule
			\end{tabular}
			\caption{State after the seventh iteration and final state.}
			\label{tab:ex-alg-4}
		\end{subtable}%
		\caption{States during performing \Cref{alg:zone-no-elong} in \Cref{ex:alg}.}
		\label{tab:ex-alg}
	\end{table}	
\end{example}

We prepare the correctness proof of \Cref{alg:zone-no-elong} with the auxiliary \Cref{prop:zone-optimal-solution-with-equality}.
\begin{lemma}
	\label{prop:zone-optimal-solution-with-equality}
	Let $(p'_1,\ldots,p'_{\kappa})$ be an optimal solution to the unrestricted price-setting subproblem.
	If $p'_k > p'_{k+1}$ for some $k\in \{1,\ldots,\kappa-1\}$, then there is an optimal solution $(p_1^{\ast},\ldots,p_{\kappa}^{\ast})$ to the monotone price-setting subproblem that has $p_k^{\ast}=p_{k+1}^{\ast}$.
\end{lemma}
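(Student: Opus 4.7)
My plan is to start from an arbitrary optimum $p^\ast$ of the monotone subproblem~\eqref{ip:zone-no-elong}; if already $p^\ast_k=p^\ast_{k+1}$ the claim is trivial, so I assume $p^\ast_k<p^\ast_{k+1}$ and produce another optimum $\tilde p^\ast$ with $\tilde p^\ast_k=\tilde p^\ast_{k+1}$. The construction leaves all coordinates $j\notin\{k,k+1\}$ unchanged and sets $\tilde p^\ast_k\defeq\tilde p^\ast_{k+1}\defeq q$, where
\[
  q\defeq \min\bigl\{p^\ast_{k+1},\,\max\{p^\ast_k,\,\underline{p}'_{k+1}\}\bigr\}
\]
is the projection of the lower weighted median $\underline{p}'_{k+1}$ of $D_{k+1}$ onto the interval $[p^\ast_k,p^\ast_{k+1}]$. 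Here $[\underline{p}'_j,\overline{p}'_j]\defeq\weightedMedian(D_j)$; the choice $q\in[p^\ast_k,p^\ast_{k+1}]$ makes monotonicity of $\tilde p^\ast$ immediate from $p^\ast_{k-1}\leq p^\ast_k\leq q\leq p^\ast_{k+1}\leq p^\ast_{k+2}$.

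To see that $\tilde p^\ast$ is optimal I would first use the one-sided perturbations $p^\ast_k\mapsto p^\ast_k+\varepsilon$ and $p^\ast_{k+1}\mapsto p^\ast_{k+1}-\varepsilon$, which are feasible for small $\varepsilon>0$ precisely because $p^\ast_k<p^\ast_{k+1}$, together with the optimality of $p^\ast$, to obtain $\varphi'_{k,+}(p^\ast_k)\geq 0$ and $\varphi'_{k+1,-}(p^\ast_{k+1})\leq 0$; by the characterisation of weighted medians in \Cref{definition:weighted-median} and \Cref{prop:fixed-zones-pricing-unconstrained} these translate to $\underline{p}'_k\leq p^\ast_k$ and $p^\ast_{k+1}\leq\overline{p}'_{k+1}$. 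Combining the hypothesis $p'_k>p'_{k+1}$ with $p'_k\in[\underline{p}'_k,\overline{p}'_k]$ and $p'_{k+1}\in[\underline{p}'_{k+1},\overline{p}'_{k+1}]$ yields the key inequality $\underline{p}'_{k+1}<\overline{p}'_k$. A three-case distinction on whether $p^\ast_k\geq \underline{p}'_{k+1}$, $p^\ast_k<\underline{p}'_{k+1}\leq p^\ast_{k+1}$, or $p^\ast_{k+1}<\underline{p}'_{k+1}$ then shows $\varphi_k(q)=\varphi_k(p^\ast_k)$ and $\varphi_{k+1}(q)=\varphi_{k+1}(p^\ast_{k+1})$: either $q$ coincides with $p^\ast_k$ or with $p^\ast_{k+1}$, or $q$ and the corresponding $p^\ast_j$ both lie in the relevant weighted-median interval. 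Hence $\tilde p^\ast$ has the same objective value as $p^\ast$ and is optimal.

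The main obstacle is the third case, $p^\ast_{k+1}<\underline{p}'_{k+1}$: here $p^\ast_{k+1}$ is itself not a weighted median of $D_{k+1}$, which can arise when the constraint $p^\ast_{k+1}\leq p^\ast_{k+2}$ is tight and pulls $p^\ast_{k+1}$ below $\underline{p}'_{k+1}$. The projection then forces $q=p^\ast_{k+1}$, preserving feasibility, but one must still show that both $p^\ast_k$ and $p^\ast_{k+1}$ lie in the median interval of $D_k$. This follows from the chain $\underline{p}'_k\leq p^\ast_k<p^\ast_{k+1}<\underline{p}'_{k+1}<\overline{p}'_k$, in which the strict inequality $\underline{p}'_{k+1}<\overline{p}'_k$ is exactly what the hypothesis $p'_k>p'_{k+1}$ supplies, so the construction of $\tilde p^\ast$ goes through without modifying any coordinate other than $p^\ast_k$ and $p^\ast_{k+1}$.
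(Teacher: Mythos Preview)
Your argument is correct, and the three-case analysis (together with the preliminary first-order conditions $\underline{p}'_k\leq p^\ast_k$ and $p^\ast_{k+1}\leq\overline{p}'_{k+1}$) does establish that $\tilde p^\ast$ has the same objective value as $p^\ast$. The verification of Case~3 is fine: from $\underline{p}'_k\leq p^\ast_k<p^\ast_{k+1}<\underline{p}'_{k+1}<\overline{p}'_k$ you indeed get that both $p^\ast_k$ and $q=p^\ast_{k+1}$ lie in the median interval of $D_k$, so $\varphi_k$ is constant between them.

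The paper's proof reaches the same conclusion by a considerably shorter route. Rather than first extracting the optimality conditions $\underline{p}'_k\leq p^\ast_k$ and $p^\ast_{k+1}\leq\overline{p}'_{k+1}$ and then working with the full median intervals, the paper uses only convexity of $\varphi_k$ and $\varphi_{k+1}$: moving $p^\ast_i$ monotonically toward the given minimizer $p'_i$ can never increase $\varphi_i$. A three-case distinction on the position of $p'_k$ relative to $[p^\ast_k,p^\ast_{k+1}]$ (namely $p'_k\leq p^\ast_k$, $p^\ast_k<p'_k<p^\ast_{k+1}$, or $p^\ast_{k+1}\leq p'_k$) then shows that one can slide $p^\ast_k$ upward and/or $p^\ast_{k+1}$ downward until they meet, each moving toward its own minimizer; monotonicity is preserved automatically because the two values only move toward each other. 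Your approach has the minor advantage of showing that each coordinate contribution $\varphi_k$ and $\varphi_{k+1}$ is preserved exactly (not merely that the sum does not increase), but this comes at the cost of invoking the explicit structure of the weighted-median intervals and the first-order optimality conditions, which the paper avoids entirely.
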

\begin{proof}
	Let $k\in \{1,\ldots,\kappa-1\}$ with $p'_k > p'_{k+1}$ for an optimal solution $(p'_1,\ldots,p'_{\kappa})$ to the unrestricted price-setting subproblem be arbitrary but fixed.
	Let $(p_1^{\ast},\ldots,p_{\kappa}^{\ast})$ be an optimal solution to the monotone price-setting subproblem.
	If $p_k^{\ast}=p_{k+1}^{\ast}$, we are done.
	So now assume that $p^{\ast}_k<p^{\ast}_{k+1}$.
	We show that we can modify the solution until $p_k^{\ast}=p_{k+1}^{\ast}$.
	For $i\in \{k,k+1\}$, we have that $p'_i$ is a minimum of $\varphi_i\colon p_i \mapsto \sum_{d\in D_i} t_d\vert r_d - p_i\vert$ and we can increase/decrease $p_i^{\ast}$ towards~$p'_i$ without worsening the objective function value by \Cref{sec:flat-tariff}.	
	\begin{figure}
		\centering
		\begin{subfigure}[t]{0.34\textwidth}
			\centering
			\begin{tikzpicture}[decoration=brace]
				\def\eps{0.1}
				% line 
				\draw[->, thick] (0,0)--(5,0);
				% ticks
				\foreach \x/\xtext in {0/0, 1/$p'_{k+1}$,2/$p'_k$,3/$p_k^{\ast}$,4/$p_{k+1}^{\ast}$} \draw(\x,5pt)--(\x,-5pt) node[below] {\xtext};
				% arrow
				\draw[->,blue, thick] (4-\eps,0.2)--(3+\eps,0.2);
			\end{tikzpicture}
			\caption{}
			\label{fig:case-distinction-order-1}
		\end{subfigure}
		\begin{subfigure}[t]{0.3\textwidth}
			\centering
			\begin{tikzpicture}[decoration=brace]
				\def\eps{0.1}
				% line 
				\draw[->, thick] (0,0)--(4,0);
				% ticks
				\foreach \x/\xtext in {0/0, 1/$p_k^{\ast}$,2/$p'_k$,3/$p_{k+1}^{\ast}$} \draw (\x,5pt)--(\x,-5pt) node[below] {\xtext};
				\draw[decorate, yshift=2ex]  (0,0) -- node[above=0.4ex] {$p'_{k+1}$}  (2,0);
				% arrows
				\draw[->,blue, thick] (3-\eps,0.2)--(2+\eps,0.2);
				\draw[->,blue, thick] (1+\eps,0.2)--(2-\eps,0.2);
			\end{tikzpicture}
			\caption{}
			\label{fig:case-distinction-order-2}
		\end{subfigure}
		\begin{subfigure}[t]{0.3\textwidth}
			\centering
			\begin{tikzpicture}[decoration=brace]
				\def\eps{0.1}
				% line 
				\draw[->, thick] (0,0)--(4,0);
				% ticks
				\foreach \x/\xtext in {0/0, 1/$p_k^{\ast}$,2/$p_{k+1}^{\ast}$,3/$p'_k$} \draw(\x,5pt)--(\x,-5pt) node[below] {\xtext};
				% brace
				\draw[decorate, yshift=2ex]  (0,0) -- node[above=0.4ex] {$p'_{k+1}$}  (3,0);
				%arrows
				\draw[->,blue, thick] (1+\eps,0.2)--(2-\eps,0.2);
			\end{tikzpicture}
			\caption{}
			\label{fig:case-distinction-order-3}
		\end{subfigure}
		\caption{Case distinction of the orders of the values $p'_k, p'_{k+1},p_k^{\ast},p_{k+1}^{\ast}$ in \Cref{prop:zone-optimal-solution-with-equality}.}
		\label{fig:case-distinction-order}
	\end{figure}
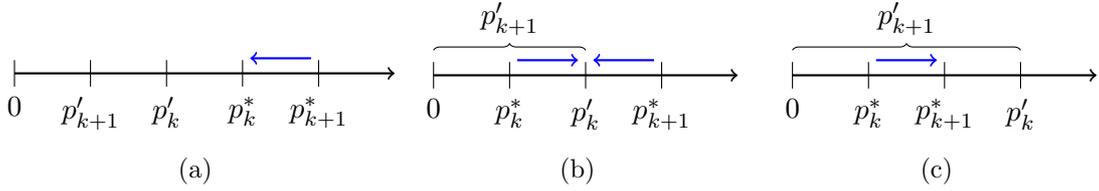
	The following cases can occur:
	\begin{itemize}
		\item If $p'_k \leq p_k^{\ast}$, then we obtain the order depicted in \Cref{fig:case-distinction-order-1}. 
		We decrease $p_{k+1}^{\ast}$ to $p_k^{\ast}$.
		\item If $p_k^{\ast} < p'_k < p_{k+1}^{\ast}$, we obtain the order depicted in \Cref{fig:case-distinction-order-2}.
		We increase $p_k^{\ast}$ to $p'_k$ and decrease $p_{k+1}^{\ast}$ to $p'_k$.
		\item If $p_k^{\ast} < p_{k+1}^{\ast} \leq p'_k$, we obtain the order depicted in \Cref{fig:case-distinction-order-3}.
		We increase $p_k^{\ast}$ to $p_{k+1}^{\ast}$.
	\end{itemize}
	Because we only move $p^{\ast}_k$ and $p^{\ast}_{k+1}$ towards each other, the price list remains increasing.
	Hence, there is an optimal solution to the monotone price-setting subproblem with $p^{\ast}_k=p^{\ast}_{k+1}$.
\end{proof}

\begin{theorem}
	\label{prop:zone-no-elong-alg-correctness}
	\Cref{alg:zone-no-elong} solves the price-setting subproblem~\eqref{ip:zone-no-elong} (requiring monotonically increasing prices, thus satisfying the no-elongation property) in $\cO(\kappa\cdot \vert D\vert)$.
\end{theorem}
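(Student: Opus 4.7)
The plan is to prove correctness by a loop invariant on the lists $I$, $D$, $P$ maintained by the algorithm, and to bound the running time by counting merge/move operations together with the cost of each weighted-median computation. The argument combines \Cref{prop:zone-optimal-solution-with-equality} with \Cref{remark:price-level}; the algorithm is essentially a ``pool adjacent violators'' procedure for the weighted-$L^1$ isotonic subproblem.

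For correctness, I would maintain the following invariant at every iteration of the while-loop: the sets $I[1],\ldots,I[L]$ form a partition of $\{1,\ldots,\kappa\}$ into consecutive index blocks; $D[l]=\bigcup_{k\in I[l]} D_k$; $P[l]\in \weightedMedian(D[l])$ for every $l$; and there exists an optimal solution $(p_1^\ast,\ldots,p_\kappa^\ast)$ to the monotone price-setting subproblem~\eqref{ip:zone-no-elong} in which $p_k^\ast = p_{k'}^\ast$ whenever $k,k'\in I[l]$ for a common $l$. The invariant holds at initialization (Lines~\ref{line:alg-zone-no-elong-initialize-I}--\ref{line:alg-zone-no-elong-initialize-P}) with singletons $I[l]=\{l\}$. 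A move operation does not modify the lists and preserves the invariant trivially. The crucial step is the merge, triggered exactly when $P[k]>P[k+1]$: reading each $I[l]$ as an aggregated ``super-level'' with combined OD data $D[l]$, the unrestricted subproblem at this aggregated level has an optimal solution whose $l$-th coordinate is any element of $\weightedMedian(D[l])$ (by the argument of \Cref{prop:fixed-zones-pricing-unconstrained}); since $P[k]>P[k+1]$ violates monotonicity on two adjacent coordinates of this aggregated unrestricted optimum, \Cref{prop:zone-optimal-solution-with-equality}, applied at the aggregated level, produces an optimal monotone solution in which all indices of $I[k]\cup I[k+1]$ share a common price. By \Cref{remark:price-level} that common price must be a weighted median of $D[k]\cup D[k+1]$. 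Replacing the two groups by their union together with such a weighted median therefore preserves the invariant. Upon termination $P$ is monotonically increasing, so the expansion in Lines~\ref{line:alg-zone-no-elong-begin-price-list}--\ref{line:alg-zone-no-elong-end-price-list} yields a feasible price list which, by the invariant, matches an optimal solution of~\eqref{ip:zone-no-elong}.

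For the runtime, observe that the lists start with length $\kappa$ and every merge strictly decreases the length by one, hence at most $\kappa-1$ merges occur. Setting $g \defeq \text{length}(P)-k$, each move decreases $g$ by $1$, each merge with $k=1$ decreases $g$ by $1$, and each merge with $k>1$ leaves $g$ unchanged; since $g$ starts at $\kappa-1$ and must reach $0$, this forces the number of moves plus the number of $k{=}1$ merges to equal $\kappa-1$, so the total number of iterations is at most $2(\kappa-1)=\cO(\kappa)$. The dominant cost per iteration is one weighted-median computation on a subset of $D$, which by the linear-time selection algorithms recalled in \Cref{sec:flat-tariff} takes $\cO(|D[k]\cup D[k+1]|)=\cO(|D|)$ time. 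Summing gives the claimed $\cO(\kappa\cdot|D|)$ bound.

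The main obstacle I anticipate is the rigorous extension of \Cref{prop:zone-optimal-solution-with-equality} from individual levels $k,k+1$ to merged index blocks $I[k],I[k+1]$. The original proof moves $p_k^\ast$ and $p_{k+1}^\ast$ along the convex univariate functions $\varphi_k$ and $\varphi_{k+1}$ towards their unrestricted minimizers; for merged groups one has to replace these by the aggregated functions $\Phi_l \colon p\mapsto \sum_{k\in I[l]}\varphi_k(p) = \sum_{d\in D[l]}t_d|r_d-p|$, verify that each $\Phi_l$ is again convex with minimizers $\weightedMedian(D[l])$ (which follows directly from \Cref{sec:flat-tariff}), and check that the three-case geometric swap argument of the original lemma transfers unchanged under this substitution.
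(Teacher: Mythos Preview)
Your proposal is correct and follows essentially the same approach as the paper: both argue correctness by repeatedly applying \Cref{prop:zone-optimal-solution-with-equality} to justify each merge on the (successively condensed) aggregated problem, and both bound the number of iterations by counting that merges and moves each occur at most $\kappa-1$ times. Your loop-invariant formulation makes the inductive structure more explicit, and you rightly flag that \Cref{prop:zone-optimal-solution-with-equality} must be reapplied at the aggregated level with $\varphi_k,\varphi_{k+1}$ replaced by the block functions $\Phi_l$---the paper handles this implicitly by ``condensing the variables'' and declaring that ``this process is repeated''.
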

\begin{proof}	
	Running time: In every iteration of the while-loop, either a merge or a move operation is performed.
	Merge can be performed at most $\kappa-1$ times because the length of the lists $P, D$ and $I$ is reduced by 1 by every merge operation.
	Move also is performed at most $\kappa-1$ times in total because the number of levels to still look at, which is $\text{length}(P)-k$, is reduced by 1 by every move operation and is also not increased by merge operations.
	Indeed, in every merge operation, the difference is decreased by 1 if $k=1$ and remains the same if $k>1$.
	Hence, \Cref{alg:zone-no-elong} terminates after at most $2\kappa-2$ iterations.
	The theoretical running time is composed of $\cO(\vert D \vert)$ for the initialization in \cref{line:alg-zone-no-elong-initialize-D,line:alg-zone-no-elong-initialize-I,line:alg-zone-no-elong-initialize-P}, $(\kappa-1)\cdot \cO(\vert D\vert)$ for at most $\kappa-1$ merge operations, $(\kappa-1)\cdot \cO(1)$ for at most $\kappa-1$ move operations and $\cO(\kappa)$ for setting the final price list in \crefrange{line:alg-zone-no-elong-begin-price-list}{line:alg-zone-no-elong-end-price-list}.
	This yields a total running time of $\cO(\kappa\cdot\vert D \vert)$.
	
	Correctness: The aim is to solve problem~\eqref{ip:zone-no-elong}.
	In \cref{line:alg-zone-no-elong-initialize-P} of \Cref{alg:zone-no-elong}, the prices are set to weighted medians, which is an optimal solution to the relaxed problem~\eqref{ip:zone-no-properties} by \Cref{prop:fixed-zones-pricing-unconstrained}.
	If the initialized price list $P=(p'_1,\ldots,p'_{\kappa})$ is increasing, this yields an optimal solution also to problem~\eqref{ip:zone-no-elong}, and the algorithm terminates after several move operations without changing the price list.
	Therefore, we consider now the case that it is not increasing.
	Let $k$ be minimal such that $p'_k > p'_{k+1}$.
	By \Cref{prop:zone-optimal-solution-with-equality}, there is an optimal solution $(p_1^{\ast},\ldots,p_{\kappa}^{\ast})$ to problem~\eqref{ip:zone-no-elong} with $p^{\ast}_k=p^{\ast}_{k+1}$.
	Therefore, we can ensure monotonicity by adding the constraint $p_k = p_{k+1}$ to the problem formulation.
	Equivalently, this means we condense the variables $p_k$ and $p_{k+1}$ to a common variable $p_{k,k+1}$ with $D_{k,k+1}=D_k \cup D_{k+1}$, hence reducing the number of variables (price levels) by one.
	In \Cref{alg:zone-no-elong}, this is implemented in form of the merge operation.
	Computing the new median, the new list of price levels $P$ is an optimal solution to problem~\eqref{ip:zone-no-properties} with the condensed input.
	This process is repeated.
	If we always started checking monotonicity from the beginning of the list $P$, we would be done.
	However, we can do a bit better. 
	Because we always search for the smallest level for which monotonicity is violated, it suffices to decrease the index that is currently looked at by 1 in \cref{line:alg-zone-no-elong-end-move}.
	This is because, when we perform a merge operation for index $k$, nothing is changed for smaller indices $k'$ with $k'\leq k-2$, and those prices are still increasing after the iteration.
	Hence, upon termination, \Cref{alg:zone-no-elong} returns an optimal solution to the price-setting subproblem with increasing prices.	
\end{proof}

\begin{remark}
	For fixed $\kappa$, \Cref{alg:zone-no-elong} is a linear-time algorithm in $\cO(\vert D\vert)$.
	Note that the solution method of \cite{Zemel1984} also can be applied to the LP formulation of \eqref{ip:zone-no-elong} if $\kappa$ is fixed analogously to the proof \Cref{prop:distance-linear} but with $s=\kappa$, solving the problem in $\cO(\vert D\vert)$ as well.
\end{remark}

\subsection{Mixed-Integer Linear Programming Formulation}
\label{sec:zd-mip}
We now provide mixed-integer linear programming (MILP) formulations for \ZDXY{} with $\textup{X} \in \{\textup{M, S}\}$ and  $\textup{Y} \in \{\textup{A,C}\}$ with constraints ensuring the no-elongation property and the no-stopover property.
It extends the formulation of \cite{Otto2017} for connected zones and single counting to all four cases of \Cref{tab:four-variants} and can also ensure the no-elongation property and the no-stopover property by implementing their sufficient conditions (see \Cref{prop:zone-properties}). 

According to \Cref{prop:bounded-prices}, we define $\bar{r}\defeq \max\{r_d: d\in D\}$, and the maximum number of zones traversed on a path is at most 
$K \defeq \begin{cases}
	\max_{d\in D}\vert V(W_d)\vert & \text{if multiple counting,}\\
	\min\{N,\max_{d\in D}\vert V(W_d)\vert\} &\text{if single counting.}
\end{cases}$

As in \cite{Otto2017} (except for renaming), the following variables are used, where only the variables for multiple counting are new:
\begin{itemize}
	\item a binary variable $x_{vz}\in \{0,1\}$ for all stations $v\in V$ and zones $z \in \{1,\ldots,N\}$ that is 1 if and only if station $v$ is assigned to zone $Z_z$,
	\item {[only for connected zones:]}
	\begin{itemize}
		\item a continuous variable $f_{v_1 v_2}\in \RR_{\geq 0}$ for all stations $v_1,v_2\in V\!$,
		\item a binary variable $s_v\in \{0,1\}$ for all stations $v\in V\!$,
		\item a continuous variable $f_{0v}\in \RR_{\geq 0}$ for all $v\in V\!$,
	\end{itemize}
	\item {[only if multiple counting:]} a binary variable $b_e\in \{0,1\}$ for all edges $e=\{v_1,v_2\}\in E$ that is 1 if and only if the stations $v_1$ and $v_2$ are in different zones,
	\item {[only if single counting:]} a binary variable $b^z_d\in \{0,1\}$ for all OD pairs $d\in D$ and zones $z\in \{1,\ldots,N\}$ that is 1 if and only if the path of OD pair $d$ traverses zone $Z_z$,
	\item a binary variable $c^k_d\in \{0,1\}$ for all OD pairs $d\in D$ and numbers of traversed zones $k\in \{1,\ldots, K\}$ that is 1 if and only if the path of OD pair $d$ traverses exactly $k$ zones,
	\item a continuous variable $p_k$ for all numbers of traversed zones $k\in \{1,\ldots,K\}$ that denotes the price for traversing $k$ zones,
	\item a continuous variable $\pi_d$ for all OD pairs $d\in D$ that denotes the price for traveling for OD pair $d$,
	\item a continuous variable $y_d\in \RR$ for all OD pairs $d\in D$ for linearizing the objective function.
\end{itemize} 

We present the MILP formulation in a modular way, where constraints \labelcref{ip:station-assignment,ip:connected-zones,ip:single-counting} have previously been used in \cite{Otto2017}.
An explanation of the constraints is given below.

\noindent
\begin{tabular}{C{\textwidth-2\tabcolsep}}
	\midrule
	objective function\\
	\midrule
\end{tabular}
\begin{equation} \label{ip:objective} 
	\begin{alignedat}{3} 
		&\min 	& \sum_{d\in D} t_d \cdot y_d& &\quad&\\
		&\text{ s.t. } 	& \pi_d - r_d &\leq y_d &&\textup{for all } d\in D\\
		&& r_d - \pi_d &\leq y_d &&\textup{for all } d\in D
	\end{alignedat}
\end{equation}
\begin{tabular}{C{\textwidth-2\tabcolsep}}
	\midrule
	station assignment\\
	\midrule
\end{tabular}
\begin{alignat}{3}
	&&\sum_{z=1}^N x_{vz}& =1 &\quad& \textup{for all } v \in V \label{ip:station-assignment} 
\end{alignat}
\begin{tabular}{C{\textwidth-2\tabcolsep}}
	\midrule
	connected zones (optional)\\
	\midrule
\end{tabular}
\begin{subequations} \label{ip:connected-zones} 
	\begin{alignat}{3}
		&&f_{0v} &\leq s_v \cdot \vert V \vert &\quad&\textup{for all } v \in V \label{ip:connected-zones-1}\\
		&&s_{v_1}+s_{v_2} + x_{v_1 z} + x_{v_2 z}&\leq 3 && \textup{for all } v_1,v_2 \in V, v_1\neq v_2, z \in \{1,\ldots,N\}\label{ip:connected-zones-2}\\
		&&f_{v_1v_2}&=0 && \textup{for all } \{v_1,v_2\} \not \in E\label{ip:connected-zones-3}\\
		&&f_{v_1 v_2}&\leq (1+x_{v_1 z} - x_{v_2 z})\cdot \vert V \vert && \textup{for all } \{v_1,v_2\} \in E, z\in \{1,\ldots,N\}\label{ip:connected-zones-4}\\
		&&\sum_{v_2\in V \cup \{0\}} f_{v_2 v_1}&= 1+ \sum_{v_2\in V} f_{v_1 v_2} && \textup{for all } v_1 \in V \label{ip:connected-zones-5}
	\end{alignat}
\end{subequations}
\begin{tabular}{C{\textwidth-2\tabcolsep}}
	\midrule
	multiple counting (Alternative 1)\\
	\midrule
\end{tabular}
\begin{subequations} \label{ip:multiple-counting} 
	\begin{alignat}{3}
		&&x_{v_1 z}-x_{v_2 z} &\leq b_e&\quad& \textup{for all } e=\{v_1,v_2\}\in E, z\in \{1,\ldots,N\} \label{ip:multiple-counting-1}\\
		&&b_e &\leq 2 - x_{v_1 z} - x_{v_2 z}&& \textup{for all } e=\{v_1,v_2\}\in E, z\in \{1,\ldots,N\} \label{ip:multiple-counting-2}\\
		&&\sum_{k=1}^{K} c^k_d &= 1&& \textup{for all } d \in D \label{ip:multiple-counting-3}\\
		&&\sum_{e\in E(W_d)} b_e &= \sum_{k=1}^{K} (k-1) \cdot c^k_d && \textup{for all } d \in D \label{ip:multiple-counting-4}
	\end{alignat}
\end{subequations}
\begin{tabular}{C{\textwidth-2\tabcolsep}}
	\midrule
	single counting (Alternative 2)\\
	\midrule
\end{tabular}
\begin{subequations} \label{ip:single-counting} 
	\begin{alignat}{3}
		&&\sum_{v \in V(W_d)} x_{vz}&\leq b^z_d \cdot \vert V \vert&\quad& \textup{for all } d \in D, z\in\{1,\ldots,N\} \label{ip:single-counting-1}\\
		&&b^z_d&\leq \sum_{v \in V(W_d)} x_{vz}&& \textup{for all } d \in D, z\in\{1,\ldots,N\} \label{ip:single-counting-2}\\
		&&\sum_{k=1}^{K} c_d^k&= 1&& \textup{for all } d \in D \label{ip:single-counting-3}\\
		&&\sum_{z=1}^N b_d^z&= \sum_{k=1}^{K} k \cdot c^k_d&& \textup{for all } d\in D \label{ip:single-counting-4}
	\end{alignat}
\end{subequations}
\begin{tabular}{C{\textwidth-2\tabcolsep}}
	\midrule
	price assignment\\
	\midrule
\end{tabular}
\begin{subequations} \label{ip:price-assignment} 
	\begin{alignat}{3}
		&&\pi_d&\leq p_k + (1-c^k_d) \cdot \bar{r}&\quad& \textup{for all } d\in D, k \in \{1,\ldots, K\} \label{ip:price-assignment-1}\\
		&&p_k&\leq \pi_d + (1-c^k_d) \cdot \bar{r}&& \textup{for all } d\in D, k \in \{1,\ldots, K\} \label{ip:price-assignment-2}\\
		&& 0&\leq p_k&&\textup{for all } k\in \{1,\ldots,K\} \label{ip:price-assignment-3}
	\end{alignat}
\end{subequations}
\begin{tabular}{C{\textwidth-2\tabcolsep}}
	\midrule
	no-elongation property (optional)\\
	\midrule
\end{tabular}
\begin{alignat}{3}
	&&p_k&\leq p_{k+1} &\quad& \textup{for all } k \in \{1,\ldots,K-1\} \label{ip:price-assignment-4}
\end{alignat}
\begin{tabular}{C{\textwidth-2\tabcolsep}}
	\midrule
	no-stopover property (optional)\\
	\midrule
\end{tabular}
\begin{subequations} \label{ip:no-stopover} 
	\begin{alignat}{3}
		&\text{\small[M]\quad}&p_k&\leq p_i + p_{k-i+1}&\quad& \textup{for all } k\in \{3,\ldots,K\}, i\in \left\{2,\ldots,\left\lfloor \nicefrac{k+1}{2} \right\rfloor \right\} \label{ip:price-assignment-6}\\
		&\text{\small[S]}&p_k&\leq p_{i_1} + p_{i_2} &&\textup{for all } k,i_1, i_2 \in \{1,\ldots, K\} \text{ with } i_1,i_2\leq k, i_1+i_2 \geq k+1  \label{ip:price-assignment-5}
	\end{alignat}
\end{subequations}

\begin{tabular}{C{\textwidth-2\tabcolsep}}
	\midrule
	variable domains\\
	\midrule
\end{tabular}
\begin{equation} \label{ip:variable-domains} 
	\begin{alignedat}{3}
		&&x_{vz}, s_v, b^k_d, b_e, c^k_d&\in \{0,1\}&\quad& \textup{for all } v\in V, e\in E, z\in \{1,\ldots,N\}, d\in D, k\in \{1,\ldots,K\}\\
		&&f_{0v_1}, f_{v_1 v_2}&\in \RR_{\geq 0}&& \textup{for all } v,w \in V\\
		&&p_k,\pi_d, y_d&\in \RR&& \textup{for all } k\in \{1,\ldots, K\}, d\in D
	\end{alignedat}
\end{equation}

The objective function to minimize $\sum_{d\in D} t_d \vert \pi_d - r_d\vert$ is linearized in the constraints~\eqref{ip:objective}.
The constrains~\eqref{ip:station-assignment} specify that each station is assigned to exactly one zone.

Constraints for connected subgraphs based on single- or multi-commodity flows or cuts are applied in many areas besides fare planning, for example, the connected $k$-cut problem \cite{Hojny2021}, forest planning and wildlife conservation \cite{Conrad2007,Dilkina2010, Carvajal2013} and price zones of electricity markets \cite{Grimm2017,Kleinert2019}.
Recently, \cite{Borndoerfer2023} (for vertex covering with capacitated trees) and \cite{Validi2022} (for political districting) gave an overview on different formulations for connected subgraphs and their performances, where single-commodity flows performed well.
The single-commodity flow constraints~\eqref{ip:connected-zones} to ensure connected zones in this paper have been adopted from \cite{Otto2017}.
The idea is to model a flow from an additional source 0 to each station.
It is not allowed to cross zone borders.
Flow starting from the source 0 can only be sent to stations that are assigned to the source \eqref{ip:connected-zones-1}, and at most one station per zone is assigned to the source \eqref{ip:connected-zones-2}.
Flow can only be sent along edges of the PTN \eqref{ip:connected-zones-3} and only if the stations of an edge belong to the same zone \eqref{ip:connected-zones-4}.
To see this, let an edge $\{v_1,v_2\}\in E$ be given.
If there is some $z\in \{1,\ldots,N\}$ such that $v_1,v_2\in Z_z$, i.e., $x_{v_1 z}=x_{v_2 z}=1$, then $1+x_{v_1 z'}-x_{v_2 z'} = 1$ for all $z'\in \{1,\ldots,N\}$ and $f_{v_1 v_2}$ is only restricted by the number of stations $\vert V \vert$.
On the other hand, if $v_1$ and $v_2$ are not in the same zone, then there is some $z\in \{1,\ldots,N\}$ such that $1+x_{v_1 z}-x_{v_2 z} = 0$, and the flow $f_{v_1 v_2}$ is set to 0.
Flow conservation with a demand of one flow unit is modeled in the constraints~\eqref{ip:connected-zones-5}.
Hence, because each station needs to receive one unit of flow and at most one station per zone gets flow from the source and the flow cannot be sent across zone borders, it is enforced that zones are connected.
The constraints~\eqref{ip:connected-zones} can be omitted if connected zones are not required.

The constraints~\eqref{ip:multiple-counting} and~\eqref{ip:single-counting} determine the counting variables for the multiple counting and single counting case, respectively.
In case of multiple counting, a variable $b_{v_1 v_2}$ corresponding to an edge $\{v_1,v_2\}\in E$ is set to 1 if the stations $v_1$ and $v_2$ belong to different zones \eqref{ip:multiple-counting-1}, and to 0 if they are in the same zone \eqref{ip:multiple-counting-2}.
The constraints~\eqref{ip:multiple-counting-3} and~\eqref{ip:multiple-counting-4} count the number of zone border crossings and set the variable $c_d^k$ for each OD pair $d\in D$ to 1 if the path $W_d$ crosses $k-1$ zone borders, which means that it traverses $k$ zones.
In case of single counting, for all OD pairs $d\in D$ and zones $z\in\{1,\ldots,N\}$, the variable $b_d^z$ is set to 1 if there is a station along the path $W_d$ that is assigned to zone $Z_z$ \eqref{ip:single-counting-1}.
It is set to 0 otherwise \eqref{ip:single-counting-2}.
The constraints~\eqref{ip:single-counting-3} and~\eqref{ip:single-counting-4} determine the total number of different zones that are traversed by each OD pair $d\in D$ by setting the corresponding variable $c_d^k$ to 1 if $k$ different zones are traversed and 0 otherwise.

Based on the number of traversed zones, the price of an OD pair $d\in D$ is assigned to the according price level by the constraints~\eqref{ip:price-assignment-1} and~\eqref{ip:price-assignment-2}.
If $c_d^k=1$ for some $d\in D$ and ${k\in \{1,\ldots,K}\}$, then the constraints resolve to $\pi_d = p_k$.
For $c_d^k = 0$, the constraints are $\pi_d \leq p_k + \bar{r}$ and $p_k \leq \pi_d + \bar{r}$, which poses no restriction due to \Cref{prop:bounded-prices}.
While the prices should always be non-negative \eqref{ip:price-assignment-3}, the constraints ensuring the no-elongation property \eqref{ip:price-assignment-4} and the no-stopover property in case of multiple counting \eqref{ip:price-assignment-6} or single counting \eqref{ip:price-assignment-5} are optional.
Finally, the constraints~\eqref{ip:variable-domains} state the variable domains.

\section{Conclusion and Further Research}
\label{sec:conclusion}

In this paper, we give a detailed description and analysis of fare structure design problems concerning flat, affine distance and zone tariffs with the objective function to minimize the sum of absolute deviations between the newly set prices and given reference prices.
For zone tariffs, we distinguish between two pricing options specifying the counting of zones, namely multiple counting (e.g., Boston) and single counting (e.g., Greater Copenhagen).
Further, we consider connected zones as well as the no-elongation property and the no-stopover property.
While the flat and affine distance tariff design problems can be solved in polynomial, even linear time, the zone tariff design problems are NP-hard, even if the price function is already fixed.
On the other hand, the price-setting subproblem, which aims to determine an optimal price function given a fixed zone partition, can be solved in polynomial time.
Moreover, we analyze properties of the different zone tariff design problem variants and present a (mixed-integer) linear programming formulation for each of the fare structure design problems.

The fare structure design problems have been added to the open-source software library LinTim \cite{Schiewe2023, Schiewe} and can be solved with the implemented MILP formulations presented here. 
First tests show that the flat and affine distance tariff design problems can be solved quickly even for large instances.
For the zone tariff design problem, the running time using the solver Gurobi (\cite{GurobiOptimization2024}) grows quickly from few minutes for an instance with 15~stations to instances with 20 to 30 stations that cannot be solved in reasonable time.
Therefore, in addition to solving the MILP with a solver like Gurobi, an interesting working direction is the design of a solution method for the zone tariff design problem based on Benders decomposition or a branch and bound algorithm motivated by \cite{Azadian2018}.
Also symmetry breaking might help \cite{Sherali2001,Margot2010, Pfetsch2019}.

In this paper, we assumed that the route choice of passengers is fixed reflecting that the main decision criterion is the travel time and not the price of a journey.
However, changes in the fare structure and the price level could lead to changes in the preferred paths.
Integrating route choice would be interesting for future research to further concern the passenger perspective during fare planning.
Also the demand could vary based on the price that has to be paid per OD pair as considered in \cite{Schiewe2024a}.
Lowering prices could increase the demand and be an incentive to use sustainable transport modes.

%%%
% Bibliography/Literature
%%%
\bibliographystyle{alphaurl}
\bibliography{tariff_design}
	
\end{document}